\documentclass{article}

\usepackage{amsmath,amsfonts,amssymb,amsthm,graphicx}

\def\d#1{{#1\kern-0.4em\char"16\kern-0.1em}}
\def\D#1{{\raise0.2ex\hbox{-}\kern-0.4em #1}}

\newtheorem{theorem}{Theorem}[section]
\newtheorem{definition}{Definition}[section]
\newtheorem{proposition}{Proposition}[section]
\newtheorem{remark}{Remark}[section]
\newtheorem{lemma}{Lemma}[section]
\newtheorem{corollary}{Corollary}[section]
\theoremstyle{definition}\newtheorem{example}{Example}[section]

\def\N{{\mathcal{N}}}
\def\R{{\mathcal{R}}}
\def\U{{\mathcal{U}}}
\def\D{{\cal{D}}}

\def\S{{\mathcal{S}}}

\def\B{{\mathcal{B}}}
\def\Q{{\mathcal{Q}}}
\def\P{{\mathcal{P}}}

\def\NN{\mathbb{N}}
\def\RR{\mathbb{R}}

\newcommand{\mat}[4]{\left[\begin{array}{cc}#1 & #2 \\ #3 & #4 \\
	\end{array}\right]}

\author{Bogdan D. Djordjevi\'c\footnote{Mathematical Institute of the Serbian Academy of Sciences and Arts, Belgrade, Republic of Serbia.\qquad {\tt bogdan.djordjevic@turing.mi.sanu.ac.rs;\ bogdan.djordjevic93@gmail.com}}, Neboj\v sa \v C. Din\v ci\'c\footnote{University of Ni\v s, Faculty of Sciences and Mathematics, Ni\v s, Republic of Serbia.\qquad {\tt nebojsa.dincic@pmf.edu.rs;\ ndincic@hotmail.com}}, and Yingshan Chen\footnote{Student at University of Sorbonne, Paris, France. yingshan.chen@etu.sorbonne-universite.fr; chysh1998@163.com}}
\date{}

\begin{document}
	
	\title{All doubly stochastic solutions to
		$AXA = XAX$ when $A$ is a permutation matrix}
	
	\maketitle

	\begin{abstract}
		In this paper we obtain all doubly-stochastic solutions to the Yang-Baxter-like matrix equation $AXA=XAX$ when $A$ is a permutation matrix. We characterize the permutation solutions, proper doubly stochastic solutions, and we accompany our results with suitable examples. 
	\end{abstract}

	{\bf MSC 2020 Classification:} {15A24,	15B51, 15A10, 15A21.}
	
	{\bf Keyphrases:} Yang-Baxter-like matrix equation, permutation matrices, doubly stochastic inner inverses, doubly stochastic partial isometries, doubly stochastic idempotents.
	

	\section{Introduction}
	For a given square matrix $A\neq0$ (usually called a coefficient matrix), the Yang-Baxter-like matrix equation (YBME) is given as \begin{equation}
		\label{YBME} AXA=XAX.\end{equation}
	It is a quadratic matrix equation which  models various phenomena in mathematics, physics, computer science, and engineering, consult e. g.  \cite{RJB}, \cite{ABH}, \cite{Fel},  \cite{NVI}, \cite{WSACXX}, and \cite{NCY}. Solving YBME \eqref{YBME} implies finding all the matrices $X$ which satisfy the above identity, that is, finding the solution set
	$$\S(A)=\{X: AXA=XAX\}.$$ 
	Clearly there are always at least two solutions, $X=0$ and $X=A$, which are called \emph{trivial solutions}. For any $X\in \S(A)$ such that $AX=XA$ we say that it is a {\it commuting solution}, while the other solutions are {\it non-commuting}. Solving \eqref{YBME} completely (finding all solutions) is still an open problem in mathematics, see \cite{ChenYong}, \cite{DehShi}, \cite{NCDBDD1}, \cite{NCDBDD2},  \cite{DinRhee2015}.
	
	In this paper it is assumed that $A$ is a permutation matrix, and we proceed to characterize and find all doubly stochastic solutions to \eqref{YBME}. This setting is naturally connected with Markov processes, Fast Fourier transform, Object-centered machine learning, scattering problems in statistical mechanics, braiding properties, etc. \cite{ABH}, \cite{{AFTNRSMT}}, \cite{BDD}, \cite{BDDcor}, \cite{NVI}, \cite{CKL}, \cite{WSACXX}, and \cite{ASRKGO}. As opposed to the traditional approaches used in solving such matrix equations, here we solve the problem by utilizing operator theory tools, and prove some convenient and useful properties which help us describe the solution set entirely. In particular, we prove that all doubly stochastic solutions are normal partial isometries, which in return implies that they all have an inner inverse which is also doubly stochastic. This is a natural generalization of the permutation-solution case, where each solution (being a permutation matrix itself) is a normal doubly stochastic isometry, therefore it has a proper inverse which is also a normal doubly stochastic isometry.
	
	 Findings in this paper offer a completion and a correction of some previous papers which have also studied this problem. In particular, all solutions obtained in \cite{BDD} are permutation solutions, but they are not the only solutions. Paper \cite{DinRhe2012} uses Brouwer fixed point theorem to prove the existence of a doubly stochastic solution, however the Brouwer fixed point theorem does not have a constructive proof and fails to provide a way of finding the hypothetical solution. Finally, paper \cite{ASRKGO} offers an algorithm for obtaining some doubly stochastic solutions, but not all of them.

	\subsection{Preliminaries}

	Throughout the paper we work with real $n-$dimensional vector spaces and square $n-$dimensional matrices. The basis of $\RR^n$ is assumed to be the Euclidean standard orthonormal system $e_1,\ldots,e_n$, where $e_i=[\delta_k^i]_{k=\overline{1,n}}$ with $\delta_i^k$ being the Kronecker delta symbol. A matrix $L\in\RR^{n\times n}$ is said to be a permutation matrix if there exists a permutation $\pi_L$ of the index set $\{1,\ldots, n\}$ such that $Le_i=e_{\pi_L(i)}$, for every $i\in\{1,\ldots,n\}$. It is straightforward to see that permutation matrices are invertible, with $L^{-1}$ corresponding to the inverse permutation $\pi^{-1}_L$ of the indexes, and the product of two permutation matrices is again a permutation matrix, corresponding to the composition of the initial permutations acting on the index set. If we $S_n$ denotes the symmetric group on $\{1,\ldots,n\}$, and
	$\P_n$ denotes the group of all $n\times n$ permutation matrices, then the natural isomorphism $L\leftrightarrow \pi_L$ is obvious. In the basis $(e_1,\ldots,e_n)$, a permutation matrix $L$ has a nonzero entry on its main diagonal, if and only if the permutation $\pi_L$ has a cycle of length $1$, if and only if $\pi_L$ has a fixed point. The identity matrix $I_n$ is a permutation matrix, and it fixes every base vector $e_i$. Therefore the corresponding permutation $\pi_{I_n}$ is the identity mapping denoted as $\operatorname{id}_n$, where $\operatorname{id}_n(k)=k$, $1\leq k\leq n$. From an operator perspective, every permutation matrix is a unitary (or orthogonal) operator.

	A real square matrix is a doubly stochastic (DS or d. s. for short) matrix if its entries are nonnegative, and its every row sum and every column sum equals to one. Clearly every permutation matrix is also DS matrix, while the converse is not true. Still, the product of any two DS matrices is again a DS matrix, therefore they form a non-commutative semigroup with the unit $I_n$ (see \cite{HKF}). We recall an important characterization of permutations in terms of DS matrices: a matrix $A$ is a permutation matrix if and only if it is invertible, and both $A$ and $A^{-1}$ are doubly stochastic matrices.

	It is a known fact that any doubly stochastic matrix can be expressed as a convex combination of permutation matrices. Furthermore, (see \cite{RB}) the set
	of doubly stochastic matrices is a convex and compact polytope  in $\RR^{n\times n}$, and its vertices are the permutation matrices. This polytope is known as the Birkhoff polytope, usually denoted by $\B_n$. Due to its peculiar nature, it is naturally associated with braid groups that are spanned by its vertices, and is often used in applied statistics and computational methods due to its convexity, compactness, and connections to probability distributions.

\subsection{Partition of the solution set}	
	
	Often, we will use the similarity-invariance of the YBME \eqref{YBME}: if $T$ is an invertible matrix, then (see e. g. \cite{NCDBDD1} and \cite{NCDBDD2}):
	\begin{equation}
		\label{similarity}
		TAT^{-1}XTAT^{-1}=XTAT^{-1}X\Leftrightarrow A\left(T^{-1}XT\right)A=\left(T^{-1}XT\right)A\left(T^{-1}XT\right).\end{equation}
	Specially, if we assume that $T$ is a permutation matrix, then $T^{-1}XT$ is a DS matrix if and only if $X$ is a DS matrix, therefore solving the initial problem \eqref{YBME} in the realm of DS matrices is not lost. In this case, the permutation matrix $T$ will be interpreted as a rearrangement of the basis vectors $e_i$.

	When solving \eqref{YBME} in the domain of doubly stochastic matrices, one encounters an immediate partition of the solution set: one consisting out of permutation solutions (denoted by $\S_P(A)$), and the other consisting out of the doubly stochastic matrices which are not permutations. We refer to the latter as {\it proper doubly stochastic solutions} (pDS for short), and we denote that set as $\S_{pDS}(A)$. When suitable, we will emphasize the difference between the permutation solutions, and the  proper doubly stochastic solutions. This splitting of the solution set is achieved by the following result:
	
	\begin{lemma}\label{regsing}
		For a given permutation matrix $A$, let $X$ be a DS solution to \eqref{YBME}. The matrix $X$ is invertible if and only if it is a permutation matrix.
	\end{lemma}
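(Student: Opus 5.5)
One direction is immediate: every permutation matrix is invertible. For the converse, suppose $X$ is a DS solution of \eqref{YBME} and $X$ is invertible. I would use the characterization recalled in the preliminaries: $X$ is a permutation matrix if and only if both $X$ and $X^{-1}$ are doubly stochastic. Since $X$ is DS by assumption, it suffices to show that $X^{-1}$ is also DS, and the natural way is to express $X^{-1}$ as a product of DS matrices.

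From $AXA=XAX$ with $A$ and $X$ invertible, I can solve for $X^{-1}$. Multiplying on the left by $X^{-1}$ and on the right by $X^{-1}$ gives
$$X^{-1}AXAX^{-1}=A,$$
which does not isolate $X^{-1}$ directly. A better route is to multiply $AXA=XAX$ on the right by $X^{-1}A^{-1}X^{-1}$, which yields $AXAX^{-1}A^{-1}X^{-1}=I_n$. Equivalently, taking inverses of both sides of the original equation gives
$$A^{-1}X^{-1}A^{-1}=X^{-1}A^{-1}X^{-1}.$$
This is still quadratic in $X^{-1}$, so I would instead isolate a single copy of $X^{-1}$. From $AXA=XAX$, left-multiplying by $X^{-1}$ gives $X^{-1}AXA=AX$, and right-multiplying by $A^{-1}X^{-1}$ then gives
$$X^{-1}AXAA^{-1}X^{-1}=A A^{-1}\cdot\ldots$$
This becomes messy, so the cleanest isolation is
$$X^{-1}=A X A X^{-1}A^{-1}X^{-1}\cdot\ldots,$$
which is circular and not useful.

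A cleaner approach uses norms. A DS matrix satisfies $\|X\|_2\le 1$, because it is a convex combination of unitary permutation matrices. Applying the operator 2-norm to the identity $X^{-1}AXA=AX$ does not immediately pin down $X^{-1}$. Instead I would use determinants together with the spectrum. Taking determinants in \eqref{YBME} gives $\det(A)^2\det X=\det(X)^2\det A$, and since $X$ is invertible, $\det X=\det A=\pm1$. So $|\det X|=1$. Every eigenvalue of a DS matrix has modulus at most $1$, because $\|X\|_2\le1$. Since the product of the eigenvalues has modulus $1$, all eigenvalues have modulus exactly $1$. Moreover, the product of the singular values equals $|\det X|=1$, and each singular value is at most $\|X\|_2\le 1$. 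Hence all singular values equal $1$, so $X$ is orthogonal.

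It remains to show that a DS orthogonal matrix is a permutation matrix. Since $X^{-1}=X^T$ and $X^T$ is DS, the recalled characterization applies. This can also be seen directly: each row of $X$ has unit Euclidean norm and unit entry sum with nonnegative entries. From $\sum_j x_{ij}^2=1=\sum_j x_{ij}$ and $0\le x_{ij}\le1$, we get $x_{ij}^2=x_{ij}$, so each entry is $0$ or $1$.

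The main step to check is the bound $\|X\|_2\le1$ for DS matrices, which follows from Birkhoff's theorem quoted in the preliminaries and the triangle inequality, together with the determinant computation; the latter is where invertibility of $X$ and the YBME are actually used.
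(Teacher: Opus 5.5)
Your argument is correct and is essentially the paper's. The determinant identity $\det(A)^2\det X=\det(X)^2\det A$ gives $|\det X|=1$, and a bound of $|\det X|$ by a product of quantities each at most $1$ then forces $X$ to be orthogonal, hence a permutation matrix. The only difference is that you use the singular values with $\|X\|_2\le 1$ (via Birkhoff), where the paper applies Hadamard's inequality to the columns; you also spell out the final $x_{ij}^2=x_{ij}$ step that the paper leaves implicit. In a write-up, delete the abandoned attempts to isolate $X^{-1}$ and the unused eigenvalue remark.
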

	\begin{proof} The converse statement is obvious thus we prove the direct statement only. If $X$ is a regular matrix, then from \eqref{YBME} it follows that $\det A=\det X$. Meanwhile $A$ is a permutation matrix so all its eigenvalues are on the unit circle in the complex plane, ergo
		$$\begin{aligned}
			1&=\prod_{\lambda(A)\in\sigma(A)}\left|\lambda(A)\right|=\left|\prod_{\lambda(A)\in\sigma(A)}\lambda(A)\right|=\left|\det A\right|=\\
			&=\left|\det X\right|=\left|\prod_{\lambda(X)\in\sigma(X)}\lambda(X)\right|=\prod_{\lambda(X)\in\sigma(X)}\left|\lambda(X)\right|.
		\end{aligned}$$
		On the other hand, since $X$ is a regular DS matrix, it follows that $0<|\lambda(X)|\leq 1$, for every $\lambda(X)\in\sigma(X)$. Combining the two, it follows that $|\lambda(X)|=1$, for every $\lambda(X)\in\sigma(X)$. By Hadamard's inequality (rather, equality), it follows that the column vectors in $X$ are mutually orthogonal, therefore $X$ must be a permutation matrix. 
	\end{proof}
	
	Therefore, for a given $A\in \P_n$, we will find all DS solutions to the YBME equation $AXA=XAX$, that is, find the following sets
	\begin{align*}
		\S_P(A)&=\{X\in \P_n: AXA=XAX\},\\
		\S_{DS}(A)&=\{X\in \B_n: AXA=XAX\},\\
		\S_{pDS}(A)&=\S_{DS}(A)\setminus \S_P(A).
	\end{align*} 
	The reason we emphasize those sets becomes clear once we note that for a permutation matrix $A$ there may be solutions to the YBME that are neither permutation nor proper DS matrices. Indeed, for $A=I_n\in\P_n$ corresponding to $I\in S_n$, any idempotent matrix $X$ is a solution to the YBME, and idempotent matrices comprise a much wider class than the DS matrices.
	
	\subsection{Uniform distribution matrices}
	Notice that the $n\times n$ uniform distribution matrix $U_n$, defined as
	\begin{equation}
		\label{UDM}
		U_n=\frac{1}{n}\mathbf{1}_n=\left[\begin{array}{ccccc}\frac{1}{n} &\frac{1}{n} & \frac{1}{n}& \ldots & \frac{1}{n}\\
			\frac{1}{n} &\frac{1}{n} & \frac{1}{n}& \ldots & \frac{1}{n}\\
			\vdots & \vdots& \ddots & \vdots &\vdots\\
			\frac{1}{n} &\frac{1}{n} & \frac{1}{n}& \ldots & \frac{1}{n}
		\end{array}\right],\end{equation}
	has the property that $PU_n=U_nP=U_n=U_n^2$, for any permutation matrix $P$, and
	$$PU_nP=PU_n=PU_n^2=U_nPU_n,$$
	thus $U_n$ is always a solution to the Yang-Baxer-like matrix equation with a permutation coefficient matrix. In a sense, it can be referred to as a trivial DS solution, since the remaining trivial solution $X=0$ does not belong to the set of doubly stochastic matrices. This observation motivates us to investigate some natural modifications of the uniform distribution matrix $U_n$. As it turns out, these modifications precisely comprise the class of all DS solutions when the matrix $A$ is a permutation matrix, as demonstrated below.

	Let $n\in\mathbb{N}$ be fixed. For an $n_0\in\{1,\ldots,n\}$, an $n-$dimensional DS matrix is said to be a uniform distribution of order $n_0$, if its nonzero entries are all equal to $\frac{1}{n_0}$. 
	In other words, every DS matrix which is a uniform distribution of order $n_0$ has precisely $n_0$ elements in each row and in each column with the entry $\frac{1}{n_0}$, while the remaining $n-n_0$ elements in every row and column are zeros. It is straightforward to see that when $n_0=n$ then one gets the uniform distribution matrix \eqref{UDM}, while, when $n_0=1$, one gets a permutation matrix. We denote by 
	\begin{equation}
		\label{Unn0}
		\U_n(n_0)=\{U: U-\textrm{is a DS matrix which is uniform of order $n_0$}\}.
	\end{equation}
	
	\section{Permutation solutions}\label{Perm}
			
	In this section we characterize the permutation-matrix solutions of \eqref{YBME} for an arbitrary $A\in \P_n$. We emphasize that paper \cite{BDD} offers one class of permutation solutions, but the said paper states incorrectly that those are the only permutation solutions (in fact, there are so much more). For this the first author apologizes and takes full responsibility. For the sake of completeness and self-readibility, in what follows we describe the solution set $\S_P$ utterly differently compared to \cite{BDD}, and we point out that all solutions from the said paper are contained in our description as well.
	
	In this setting, it is often more natural to work with the corresponding permutations of indices rather than the matrices themselves. Thus \eqref{YBME} reads
	\begin{equation}
		\label{ybeperm} \pi_A\pi_X\pi_A=\pi_X\pi_A\pi_X.
	\end{equation}
	
Solving \eqref{ybeperm} effectively and completely is still an open problem, and is the core subject of algebraic topology and knot theory. On this occasion we characterize the solutions to \eqref{ybeperm} in terms of orbits and actions, and explain why our approach works for any permutation $\pi_A$, although the effectiveness becomes more difficult as the number of cycles increases, especially the number of cycles having the same length. Still, our findings suffice for Section \ref{cyclic} and Section \ref{general}.

	For any $\pi\in S_n$, the orbit of a point $i\in\{1,...,n\}$ is defined as the set 
	$$\{\pi^k(i): k\in\mathbb{Z}\}=\{\pi^k(i): k=0,1,...,\operatorname{ord}(\pi)-1\}.$$ 
	These orbits are precisely the sets of entries occurring in disjoint cycles of the permutation $\pi$, including singletons which correspond to fixed points. This gives rise to the following classification of permutation solutions: 
		\begin{definition}
		Let $A\in\P_n$, and let $\Omega_1,\ldots,\Omega_m$ be the orbits of the
		permutation $\pi_A\in S_n$ corresponding to $A$. A permutation solution
		$X\in\mathcal S_P(A)$ is called orbit-preserving if the corresponding
		permutation $\pi_X$ satisfies
		$$
		(\forall i\in\{1,...,m\})\;\pi_X(\Omega_i)=\Omega_i.
		$$
		Otherwise, $X$ is called intertwining.
	\end{definition}
	For example, for $\pi_A=(12)(3)=C_1C_2$, we have $\Omega_1=\{1,2\},\;\Omega_2=\{3\}$, so orbit-preserving means $\pi_X(\{1,2\})=\{1,2\}$ and $\pi_X(\{3\})=\{3\}$, which agrees with the cycles. 
	
	Let us denote by $C_{\P_n}(A)=\{B\in \P_n:AB=BA\}$ the centralizer of $A$ in $\P_n$. Also, since the cyclic group $\langle A\rangle$ acts on $\mathcal S_P(A)$ by conjugation, $$A^k\circ X:=A^kXA^{-k},$$ 
	the orbit of $X\in\mathcal S_P(A)$ under this action (so-called $A$-orbit of $X$) will be denoted by
	$$
	\mathcal O_A(X)=\{A^kXA^{-k}:k\in\mathbb Z\}.
	$$

	\begin{theorem}\label{Centralizer_decomposition}
		Let $A\in \P_n$. For each $C\in C_{\P_n}(A)$, define the $C$-sector
		$$
		\mathcal Q_C(A)=\{Q\in \P_n:Q^3=(QA)^2=C\}.
		$$
		Then the solution set $\S_P(A)$ is the disjoint union of solution sectors $A^{-1}\Q_C(A)$:
		$$
		\mathcal S_P(A)=\bigcup_{C\in C_{\P_n}(A)}A^{-1}\mathcal Q_C(A).
		$$
		
		Equivalently, $X\in\mathcal S_P(A)$ if and only if, for $Q=AX,$
		there exists a unique $C\in C_{\P_n}(A)$ such that $Q^3=(QA)^2=C.$
		In that case,
		$$
		X=A^{-1}Q=QAQ^{-1}.
		$$
		In particular, every permutation solution $X$ is permutation-similar to $A$ and hence has the same cycle type as $A$.
	\end{theorem}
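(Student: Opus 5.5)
The plan is to translate everything into identities among the invertible matrices $A$, $X$ and $Q=AX$, all of which lie in the group $\P_n$. Because every element of $\P_n$ is invertible, cancellation is free, and no doubly stochastic structure is needed beyond this.

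\emph{Forward direction.} Let $X\in\S_P(A)$ and put $Q=AX$. Group $Q^3$ as
$$Q^3=AXAXAX=(AXA)(XAX).$$
Then use \eqref{YBME} to replace $XAX$ by $AXA$. This gives
$$Q^3=(AXA)(AXA)=(QA)^2.$$
Call this common value $C$. To see that $C\in C_{\P_n}(A)$, observe that $C=Q^3$ commutes with $Q$, and $C=(QA)^2$ commutes with $QA$. Hence $C$ commutes with $Q^{-1}(QA)=A$. So $C$ lies in the centralizer, and it is uniquely determined by $X$, since $C=Q^3$ with $Q=AX$.

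\emph{Converse.} Suppose $Q\in\P_n$ satisfies $Q^3=(QA)^2=C$ for some $C\in C_{\P_n}(A)$, and set $X=A^{-1}Q$. Cancelling one $Q$ on the left of $QQQ=QAQA$ gives $Q^2=AQA$. Substituting $Q=AX$ turns this into
$$AXAX=AAXA,$$
and cancelling $A$ on the left gives $XAX=AXA$. So $X\in\S_P(A)$. In fact this shows that the centralizer condition is automatic once $Q^3=(QA)^2$ holds.

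\emph{Disjointness.} This is immediate: the sector containing $X$ is indexed by $C=(AX)^3$, which is a function of $X$. So the sets $A^{-1}\Q_C(A)$ for distinct $C$ cannot intersect, and together they cover $\S_P(A)$ by the two directions above.

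\emph{Formula $X=QAQ^{-1}$ and cycle type.} We have
$$A^{-1}Q=QAQ^{-1}\iff Q=AQAQ^{-1}\iff Q^2=AQA,$$
which is exactly the identity derived above. Hence $X$ is conjugate to $A$ by the permutation matrix $Q$. Conjugation in $S_n$ preserves cycle type, so $\pi_X$ and $\pi_A$ have the same cycle type.

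The only step that needs any thought is the commutation $CA=AC$. It comes from writing $A=Q^{-1}(QA)$ and noting that $C$ is simultaneously a power of $Q$ and a power of $QA$. Everything else is rearranging and cancelling factors.
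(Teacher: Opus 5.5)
Your proof is correct and follows essentially the same route as the paper: the grouping $Q^3=(AXA)(XAX)=(AXA)^2=(QA)^2$, the commutation of $C$ with $A$ via $A=Q^{-1}(QA)$, cancelling to $Q^2=AQA$ for the converse, disjointness because $C=(AX)^3$ is determined by $X$, and the equivalence $Q^2=AQA\iff A^{-1}Q=QAQ^{-1}$ all match the paper's argument. Your side remark that the centralizer condition is automatic once $Q^3=(QA)^2$ holds is a correct small addition, since the same commutation argument applies to any such $Q$.
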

	
	\begin{proof}
		Suppose first that $X\in\mathcal S_P(A)$ and set $Q=AX.$ Since $AXA=XAX,$
		we obtain $Q^3=(AX)^3=(AXA)(XAX)=(AXA)^2=(QA)^2.$ Let $C:=Q^3=(QA)^2.$
		Clearly, $C$ commutes with $Q$, since $C=Q^3$, and it also commutes with
		$QA$, since $C=(QA)^2$. Since $A=Q^{-1}(QA),$ it follows that
		$$
		AC=Q^{-1}(QA)C=Q^{-1}C(QA)=CQ^{-1}(QA)=CA.
		$$
		Therefore, $C\in C_{\P_n}(A),$ and hence $Q\in\mathcal Q_C(A).$
		Since $X=A^{-1}Q$, we conclude that $X\in A^{-1}\mathcal Q_C(A).$
		
		Conversely, let $C\in C_{\P_n}(A)$ and let $Q\in\mathcal Q_C(A).$
		Then $Q^3=(QA)^2,$ and left multiplication by $Q^{-1}$ yields $Q^2=AQA$.
		Now for $X=A^{-1}Q$ we have
		$$
		XAX=A^{-1}QAA^{-1}Q=A^{-1}Q^2=A^{-1}AQA=QA=AXA.$$
		Thus $X\in\mathcal S_P(A)$.
		
		The union is disjoint. Indeed, for a given solution $X$, the permutation
		$Q=AX$ is uniquely determined, and consequently so is $C=Q^3.$
		
		Finally, from $Q^3=(QA)^2$ we obtain $Q^2=AQA.$ Therefore, $A^{-1}Q=QAQ^{-1},$
		and thus $X=QAQ^{-1}.$ Hence $X$ is permutation-similar to $A$, and consequently $X$ and $A$ have the same cycle type.
	\end{proof}
	
	\begin{remark}
		For each $C\in C_{\P_n}(A)$, the sector $\mathcal Q_C(A)$ can be written as
		$$
		\mathcal Q_C(A)=\operatorname{Cub}(C)\cap \operatorname{Sqr}(C)A^{-1},
		$$
		where $\operatorname{Cub}(C)=\{Q\in \P_n:Q^3=C\}$ and
		$\operatorname{Sqr}(C)=\{R\in \P_n:R^2=C\}.$
		Thus characterization of a $C$-sector reduces to a simultaneous cube- and square-root problem.
	\end{remark}
	
	\begin{remark}
		Theorem \ref{Centralizer_decomposition} reduces the permutation solution problem for an arbitrary
		permutation matrix $A$ to the determining the sectors
		$\mathcal Q_C(A),\; C\in C_{\P_n}(A).$
		When $A$ is a single cycle, its centralizer is cyclic,
		$C_{\P_n}(A)=\langle A\rangle,$
		so the sectors are indexed simply by the powers $C=A^k$.
		
		For a permutation consisting of several cycles, the centralizer is
		generally more complicated, especially in the presence of cycles of
		the same length.
	\end{remark}
	
	\begin{remark}
		Actually, because of $\pi_X=(\pi_A \pi_X)\pi_A (\pi_A \pi_X)^{-1},$ it follows that $\pi_X$ and $\pi_A$ are permutation-similar in $S_n$, hence they have the same number of cycles of the same lengths.
	\end{remark}
	
	\begin{example}\label{Ex_4,2}
		Let $A\in\P_6$ be the permutation matrix associated with the permutation $\pi_A=(1\,2\,3\,4)(5\,6)\in S_6.$ Write $a=(1\,2\,3\,4),\;b=(5\,6),$
		so that $\pi_A=ab$. Since the two cycles have distinct lengths,
		$C_{S_6}(\pi_A)=\langle a\rangle\times\langle b\rangle,$
		and hence
		$$
		C_{S_6}(\pi_A)=\{I,b,a,ab,a^2,a^2b,a^3,a^3b\},
		$$
		where $a^2=(1\,3)(2\,4).$
		
		Via the natural isomorphism between $\mathcal P_6$ and $S_6$, the matrix
		sector $\mathcal Q_C(A)$ corresponds to
		$$
		\mathcal Q_{\pi_C}(\pi_A)=\{\pi_Q\in S_6:\pi_Q^3=(\pi_Q\pi_A)^2=\pi_C\}.
		$$
		We determine the nonempty sectors $\Q_{\pi_C}(\pi_A)$ by considering separately the square- and cube-root conditions corresponding to Theorem \ref{Centralizer_decomposition}.
		
		We first use the square-root condition. If $\Q_{\pi_C}(\pi_A)\neq\emptyset$, then $(\pi_Q \pi_A)^2=\pi_C$ for some $\pi_Q\in S_6$, so $\pi_C$ must be a square in $S_6$.
		Recall that squaring an odd cycle preserves its length, whereas
		squaring a cycle of an even length $2r$ splits it into two cycles of the same
		length $r$. Consequently, in the square of a permutation, the number
		of cycles of each even length is even.
		
		The relevant elements of $C_{S_6}(\pi_A)$ have the following cycle types:
		$$
		\begin{array}{c|c}
			\pi_C & \text{cycle type}\\
			\hline
			b & 2\,1^4\\
			a,\ a^3 & 4\,1^2\\
			ab,\ a^3b & 4\,2\\
			a^2 & 2^2\,1^2\\
			a^2b & 2^3,
		\end{array}
		$$
		while $I$ has cycle type $1^6$. Remark that the cycle type e.g. $2\,1^4$ means a permutation having one $2$-cycle and four $1$-cycles (i.e. fixed points). Hence $b, a, ab, a^2b, a^3, a^3b$ cannot be squares in $S_6$. Therefore, $\Q_{\pi_C}(\pi_A)=\emptyset$ for
		$\pi_C\in\{b,a,ab,a^2b,a^3,a^3b\},$ and only the sectors $\pi_C=I$ and $\pi_C=a^2$ remain.
		
		Consider first $\pi_C=a^2$. Since $\pi_Q^3=a^2=(1\,3)(2\,4),$ we must have $\pi_Q=a^2$. Indeed, the cube of an $r$-cycle consists of $\gcd(r,3)$ cycles of length $r/\gcd(r,3)$. Hence a transposition in $\pi_Q^3$ can arise only from a transposition or from a $6$-cycle. A $6$-cycle, however, produces three transpositions under cubing, whereas $a^2$ contains exactly two. Thus the transpositions $(1\,3)$ and $(2\,4)$ must themselves be cycles in $\pi_Q$, while the remaining two points must be fixed. Therefore $\pi_Q=a^2$. Moreover, $(a^2\pi_A)^2=(a^3b)^2=a^2$, and, consequently,
		$$
		\Q_{a^2}(\pi_A)=\{a^2\}.
		$$
		
		It remains to consider the sector $\pi_C=I$. Here $\pi_Q^3=I$ implies that $\pi_Q$ consists only of cycles of lengths $1$ and $3$. Thus $\pi_Q$ has one of the cycle types
		$$
		1^6,\; 3\,1^3,\; 3^2.
		$$
		A direct check among these remaining cube roots shows that the
		additional condition $(\pi_Q \pi_A)^2=I$ retains precisely the following eight permutations:
		\begin{align*}
			\Q_{I}(\pi_A)=\{&
			(2\,4\,3),\,
			(1\,3\,2),\,
			(1\,4\,2),\,
			(1\,4\,3),\,
			(1\,4\,6)(2\,5\,3),\\
			&(1\,4\,5)(2\,6\,3),\,
			(1\,5\,2)(3\,6\,4),\,
			(1\,6\,2)(3\,5\,4)
			\}.
		\end{align*}
		
		Thus the only nonempty sectors are $\Q_{I}(\pi_A)$ and $\Q_{a^2}(\pi_A).$
		By Theorem \ref{Centralizer_decomposition}, every permutation solution
		is obtained from $\pi_X=\pi_A^{-1}\pi_Q.$
		The permutations corresponding to all permutation-matrix solutions are therefore
		\begin{align*}
			\{&
			(1\,2\,3\,4)(5\,6),\,
			(1\,4\,2\,3)(5\,6),\,
			(1\,2\,4\,3)(5\,6),\,
			(1\,3\,2\,4)(5\,6),\,
			(1\,3\,4\,2)(5\,6),\\
			&(1\,3)(2\,6\,4\,5),\,
			(1\,3)(2\,5\,4\,6),\,
			(1\,6\,3\,5)(2\,4),\,
			(1\,5\,3\,6)(2\,4)
			\}.
		\end{align*}
		Hence there are precisely nine permutation solutions.
		
		The first five solutions preserve the two orbits, $\{1,2,3,4\}$ and $\{5,6\}$, of $\pi_A$, whereas the remaining four intertwine them. In particular, even for a coefficient permutation consisting of only two cycles, the sector $\pi_C=I$ may contain both orbit-preserving and intertwining solutions. \hfill$\clubsuit$
	\end{example}
	
	\begin{theorem}
		Let $\pi_A\in S_n$ be a permutation consisting out of $m$ cycles, $\pi_A=C_1C_2\dots C_m$, where $|C_i|=r_i$, $1\leq i\leq m$, and let
		$$
		L=\operatorname{lcm}(r_1,r_2,\ldots,r_m)
		=\operatorname{ord}(A).
		$$
		If $X\in\mathcal S_P(A)$, then its $A$-orbit $\mathcal O_A(X)$ is finite and
		$|\mathcal O_A(X)|\mid L.$
	\end{theorem}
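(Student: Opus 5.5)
This is a direct application of the orbit--stabilizer theorem to the conjugation action of the cyclic group $\langle A\rangle$ on $\S_P(A)$.

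First we check that the action is well defined. Let $X\in\S_P(A)$ and $k\in\mathbb Z$. By the similarity invariance \eqref{similarity} with $T=A^{k}$, the matrix $A^kXA^{-k}$ solves the equation with coefficient $A^{-k}AA^{k}=A$. Explicitly,
$$A(A^kXA^{-k})A=A^k(AXA)A^{-k}=A^k(XAX)A^{-k}=(A^kXA^{-k})A(A^kXA^{-k}).$$
A product of permutation matrices is again a permutation matrix, so $A^kXA^{-k}\in\S_P(A)$. Moreover $A^0\circ X=X$ and $A^j\circ(A^k\circ X)=A^{j+k}\circ X$, so this is a group action of $\langle A\rangle$.

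Next we identify the order of $\langle A\rangle$. The group is cyclic of order $\operatorname{ord}(A)$. For $\pi_A=C_1\cdots C_m$ with disjoint cycles of lengths $r_i$, we have $\pi_A^k=\operatorname{id}_n$ if and only if $r_i\mid k$ for every $i$, since disjoint cycles commute and each $C_i^k$ acts only on its own orbit. Hence $\operatorname{ord}(A)=\operatorname{lcm}(r_1,\ldots,r_m)=L$. In particular $\langle A\rangle=\{A^k:0\le k<L\}$ is finite, so the orbit $\mathcal O_A(X)=\{A^kXA^{-k}:0\le k<L\}$ has at most $L$ elements.

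For divisibility, let $H=\{A^k: A^kX=XA^k\}$ be the stabilizer of $X$. It is a subgroup of $\langle A\rangle$, so by the orbit--stabilizer theorem $|\mathcal O_A(X)|=[\langle A\rangle:H]=L/|H|$, which divides $L$ by Lagrange's theorem. Concretely, since $H$ is a subgroup of a cyclic group, $H=\langle A^d\rangle$ where $d$ is the least positive integer with $A^dX=XA^d$. Then $d\mid L$, the orbit equals $\{X,AXA^{-1},\ldots,A^{d-1}XA^{-(d-1)}\}$ with these elements pairwise distinct, and so $|\mathcal O_A(X)|=d$.

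None of these steps is a real obstacle. The only points needing care are the closure of $\S_P(A)$ under the conjugation, which the displayed computation settles, and the identification $\operatorname{ord}(A)=L$.
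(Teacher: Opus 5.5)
Your proof is correct and is essentially the paper's argument. The paper takes $\varphi(X)=AXA^{-1}$, notes $\varphi^L=\mathrm{id}$ on $\S_P(A)$, and uses the division algorithm $L=qd+r$ to show that the least $d$ with $\varphi^d(X)=X$ divides $L$; this is exactly your concrete step $H=\langle A^d\rangle$, with orbit--stabilizer and Lagrange serving as a wrapper. Your version also justifies two facts the paper only asserts: that $|\mathcal O_A(X)|$ equals this minimal $d$, and that $\operatorname{ord}(A)=L$.
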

	
	\begin{proof}
		Consider the map
		$\varphi:\mathcal S_P(A)\longrightarrow\mathcal S_P(A),\;\varphi(X)=AXA^{-1}.$
		Since conjugation by $A$ preserves the equation $AXA=XAX$, the map
		$\varphi$ is well defined on $\mathcal S_P(A)$.
		
		Since $A^L=I,$ we have $\varphi^L(X)=A^LXA^{-L}=X$ for every $X\in\mathcal S_P(A)$.
		
		Let $d$ be the cardinality of the orbit $\mathcal O_A(X)$, equivalently,
		the smallest positive integer such that $\varphi^d(X)=X,$ or $A^dXA^{-d}=X.$
		Write $L=qd+r,\; 0\leq r<d.$ Then
		$X=\varphi^L(X)=\varphi^{qd+r}(X)=\varphi^r(X).$
		By the minimality of $d$, it follows that $r=0$. Hence $d\mid L,$
		which proves the assertion.
	\end{proof}
	
	One may use the equivalent formula
	$|\mathcal O_A(X)|=\min\{d\geq 1: A^d X=XA^d\}.$
	
	\begin{corollary}\label{cor_orbits}
		Let $A\in \P_n$ and let $X\in\mathcal S_P(A)$. Put
		$$
		Q=AX,
		\qquad
		Q^3=(QA)^2=C\in C_{\P_n}(A).
		$$
		Then the $A$-orbit of $X$ is contained in the corresponding solution
		sector:
		$$
		\mathcal O_A(X)
		\subseteq
		A^{-1}\mathcal Q_C(A).
		$$
		Equivalently, if $Q_k=A^kQA^{-k},\; k\in\mathbb Z,$ then $Q_k\in\mathcal Q_C(A)$ for every $k\in\mathbb Z$.
	\end{corollary}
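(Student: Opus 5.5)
The proof is a direct computation: the conjugation action of $\langle A\rangle$ maps the sector $\mathcal Q_C(A)$ into itself because $C$ commutes with $A$. Fix $k\in\mathbb Z$ and set $X_k=A^kXA^{-k}$. By the well-definedness of the map $\varphi$ in the preceding theorem, $X_k\in\mathcal S_P(A)$. Put $Q_k:=AX_k$. Since $A$ commutes with its own powers,
$$
Q_k=A\,A^kXA^{-k}=A^k(AX)A^{-k}=A^kQA^{-k},
$$
so the two formulations in the statement describe the same objects.

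Next we compute the cube and the square. Conjugation by $A^k$ is a group automorphism of $\P_n$. Hence
$$
Q_k^3=A^kQ^3A^{-k}=A^kCA^{-k},
$$
and, using $A=A^kAA^{-k}$,
$$
(Q_kA)^2=\bigl(A^kQA^{-k}A^kAA^{-k}\bigr)^2=A^k(QA)^2A^{-k}=A^kCA^{-k}.
$$
Because $C\in C_{\P_n}(A)$, it commutes with every power of $A$, so $A^kCA^{-k}=C$. Thus $Q_k^3=(Q_kA)^2=C$, which means $Q_k\in\mathcal Q_C(A)$.

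It follows that $X_k=A^{-1}Q_k\in A^{-1}\mathcal Q_C(A)$ for every $k$, which gives $\mathcal O_A(X)\subseteq A^{-1}\mathcal Q_C(A)$. The uniqueness part of Theorem \ref{Centralizer_decomposition} (disjointness of the sectors) confirms that $C$ is exactly the sector label of each $X_k$, so the orbit stays within a single sector.

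No step is a genuine obstacle. The only point needing care is the identity $A^kCA^{-k}=C$, which uses precisely the hypothesis that $C$ lies in the centralizer of $A$; this hypothesis is guaranteed by Theorem \ref{Centralizer_decomposition}.
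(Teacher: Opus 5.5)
Your proof is correct and follows essentially the same route as the paper: conjugate the relation $Q^3=(QA)^2=C$ by $A^k$, use $A^kCA^{-k}=C$ because $C\in C_{\P_n}(A)$, and identify $X_k=A^{-1}Q_k=A^kXA^{-k}$. Your extra appeals to $X_k\in\mathcal S_P(A)$ and to the disjointness of the sectors are harmless but not needed for the inclusion.
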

	
	\begin{proof}
		For $Q_k=A^kQA^{-k},$ we have, since $C\in C_{\P_n}(A)$,
		$Q_k^3=A^kQ^3A^{-k}=A^kCA^{-k}=C.$
		Moreover, $Q_kA=A^kQAA^{-k}$, and hence
		$$
		(Q_kA)^2=A^k(QA)^2A^{-k}=A^kCA^{-k}=C.
		$$
		Thus $Q_k\in\mathcal Q_C(A).$
		Since the solution corresponding to $Q_k$ is $X_k=A^{-1}Q_k=A^kXA^{-k},$
		it follows that $\mathcal O_A(X)\subseteq A^{-1}\mathcal Q_C(A).$
	\end{proof}
	
	Thus the parametrization $X\leftrightarrow Q=AX$ is equivariant with
	respect to conjugation by powers of $A$.
	
	\begin{theorem}\label{Centralizer_orbit}
		Let $A\in \P_n$. Then the centralizer $C_{\P_n}(A)$ acts on
		$\mathcal S_P(A)$ by conjugation,
		$$
		B\cdot X=BXB^{-1},
		\qquad B\in C_{\P_n}(A),\quad X\in\mathcal S_P(A).
		$$
		Moreover, for every $B,C\in C_{\P_n}(A)$,
		$$
		B\bigl(A^{-1}\mathcal Q_C(A)\bigr)B^{-1}=A^{-1}\mathcal Q_{BCB^{-1}}(A).
		$$
		Thus the centralizer permutes the solution sectors according to conjugation of their indices in $C_{\P_n}(A)$.
	\end{theorem}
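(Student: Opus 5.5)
The plan is a direct verification in three steps: that conjugation by an element of the centralizer preserves $\S_P(A)$, that it defines a group action, and that it carries sectors to sectors as stated. The only ingredients are the identity $BAB^{-1}=A$ for $B\in C_{\P_n}(A)$ and the parametrization $X=A^{-1}Q$ from Theorem \ref{Centralizer_decomposition}.

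First I show the action is well defined. Take $B\in C_{\P_n}(A)$ and $X\in\S_P(A)$, and put $Y=BXB^{-1}$. Then $Y$ is again a permutation matrix. Conjugating $AXA=XAX$ by $B$ gives $(BAB^{-1})Y(BAB^{-1})=Y(BAB^{-1})Y$. Since $BAB^{-1}=A$, this is $AYA=YAY$, so $Y\in\S_P(A)$. This is just the similarity invariance \eqref{similarity} with $T=B^{-1}$, combined with $T^{-1}AT=A$. The action axioms $I\cdot X=X$ and $(B_1B_2)\cdot X=B_1\cdot(B_2\cdot X)$ are immediate for conjugation, and $C_{\P_n}(A)$ is a subgroup of $\P_n$.

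Next I prove the sector identity. Let $Q\in\Q_C(A)$ and put $Q'=BQB^{-1}$. Then $Q'^3=BQ^3B^{-1}=BCB^{-1}$. Using $A=BAB^{-1}$ again, $Q'A=BQAB^{-1}$, so $(Q'A)^2=B(QA)^2B^{-1}=BCB^{-1}$. Before concluding, I check that the index $BCB^{-1}$ is a legitimate one. Since $C$ commutes with $A$ and $B$ commutes with $A$, the element $BCB^{-1}$ also commutes with $A$, so $BCB^{-1}\in C_{\P_n}(A)$. Hence $B\Q_C(A)B^{-1}\subseteq\Q_{BCB^{-1}}(A)$.

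For the solution sets, $B(A^{-1}Q)B^{-1}=A^{-1}(BQB^{-1})$ because $B$ commutes with $A^{-1}$. This gives the inclusion
$$B\bigl(A^{-1}\Q_C(A)\bigr)B^{-1}\subseteq A^{-1}\Q_{BCB^{-1}}(A).$$
For the reverse inclusion, I apply the same argument with $B^{-1}\in C_{\P_n}(A)$ in place of $B$ and $BCB^{-1}$ in place of $C$. This yields $B^{-1}\bigl(A^{-1}\Q_{BCB^{-1}}(A)\bigr)B\subseteq A^{-1}\Q_C(A)$, and conjugating back by $B$ gives equality.

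The last sentence of the statement, that the centralizer permutes the sectors according to conjugation of their indices, is then a restatement of this equality. Combined with the disjointness from Theorem \ref{Centralizer_decomposition}, each conjugation $X\mapsto BXB^{-1}$ is a bijection of $\S_P(A)$ that maps the sector indexed by $C$ onto the sector indexed by $BCB^{-1}$.

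I expect no real obstacle here. The only point needing care is that every step, including the claim that $BCB^{-1}$ lies in $C_{\P_n}(A)$, uses the commutation $BA=AB$.
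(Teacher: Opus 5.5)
Your proposal is correct and follows essentially the same route as the paper. Both arguments get invariance of $\S_P(A)$ from $AB=BA$ and transport the parameter $Q=AX$ to $BQB^{-1}$, which gives $(BQB^{-1})^3=(BQB^{-1}A)^2=BCB^{-1}$, and both obtain the reverse inclusion by applying the same argument to $B^{-1}$; your additional checks of the action axioms and of $BCB^{-1}\in C_{\P_n}(A)$ are points the paper leaves implicit.
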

	
	\begin{proof}
		Let $B\in C_{\P_n}(A)$ and $X\in\mathcal S_P(A)$. Since $AB=BA$,
		$$
			A(BXB^{-1})A=B(AXA)B^{-1}=B(XAX)B^{-1}=(BXB^{-1})A(BXB^{-1}),
		$$
		so $BXB^{-1}\in\mathcal S_P(A)$.
		
		Finally, let $X\in A^{-1}\mathcal Q_C(A),\;Q=AX.$ Then $Q^3=(QA)^2=C.$
		For $X_B=BXB^{-1},$ the corresponding parameter is $Q_B=AX_B=BQB^{-1},$
		and therefore $Q_B^3=(Q_BA)^2=BCB^{-1}.$
		Hence
		$$
		B\bigl(A^{-1}\mathcal Q_C(A)\bigr)B^{-1} \subseteq A^{-1}\mathcal Q_{BCB^{-1}}(A).
		$$
		Applying the same argument to $B^{-1}$ yields the reverse inclusion,
		and thus
		$$
		B\bigl(A^{-1}\mathcal Q_C(A)\bigr)B^{-1}
		=A^{-1}\mathcal Q_{BCB^{-1}}(A).
		$$
	\end{proof}
	
	\begin{corollary}
		Let $A\in \P_n$ and suppose that $C_{\P_n}(A)$ is Abelian.
		Then each orbit of the conjugation action of $C_{\P_n}(A)$ on
		$\mathcal S_P(A)$ is contained in a single solution sector.
		
		More precisely, if $X\in\mathcal S_P(A),\;Q=AX$ and $Q^3=(QA)^2=C,$
		then
		$$
		\mathcal O_{C_{\P_n}(A)}(X)\subseteq A^{-1}\mathcal Q_C(A).
		$$
	\end{corollary}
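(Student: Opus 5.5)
This is a direct consequence of Theorem \ref{Centralizer_orbit}, so the plan is simply to specialize that result to an Abelian centralizer.

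Fix $X\in\mathcal S_P(A)$ with $Q=AX$ and $Q^3=(QA)^2=C$. By Theorem \ref{Centralizer_decomposition}, $C\in C_{\P_n}(A)$ and $X\in A^{-1}\mathcal Q_C(A)$. Take an arbitrary element of the orbit $\mathcal O_{C_{\P_n}(A)}(X)$. It has the form $BXB^{-1}$ for some $B\in C_{\P_n}(A)$, and Theorem \ref{Centralizer_orbit} guarantees that $BXB^{-1}\in\mathcal S_P(A)$.

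By the second part of Theorem \ref{Centralizer_orbit},
$$BXB^{-1}\in B\bigl(A^{-1}\mathcal Q_C(A)\bigr)B^{-1}=A^{-1}\mathcal Q_{BCB^{-1}}(A).$$
Both $B$ and $C$ lie in $C_{\P_n}(A)$, which is Abelian by hypothesis, so $BCB^{-1}=C$. Hence $BXB^{-1}\in A^{-1}\mathcal Q_C(A)$. Since $B$ was arbitrary, the whole orbit is contained in $A^{-1}\mathcal Q_C(A)$.

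The sectors are disjoint by Theorem \ref{Centralizer_decomposition}, so this one sector is the unique sector containing the orbit, which proves the first formulation of the statement. As a sanity check, one can also verify this directly from the parameter. For $X_B=BXB^{-1}$ we have $Q_B=AX_B=BQB^{-1}$, because $B$ commutes with $A$. Then $Q_B^3=BQ^3B^{-1}=BCB^{-1}=C$, and similarly $(Q_BA)^2=B(QA)^2B^{-1}=C$.

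There is no real obstacle in this argument. The only point needing care is that commutativity is used for the pair $B,C$ inside the centralizer, not for $B$ and $Q$. The element $Q$ generally does not lie in $C_{\P_n}(A)$ and need not commute with $B$.
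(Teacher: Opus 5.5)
Your proposal is correct and matches the paper's proof: both specialize Theorem \ref{Centralizer_orbit} and use commutativity of $C_{\P_n}(A)$ to obtain $BCB^{-1}=C$. Your direct check via $Q_B=BQB^{-1}$ is accurate but not needed, as is your remark that commutativity is used only for $B$ and $C$, not for $Q$.
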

	
	\begin{proof}
		If $B,C\in C_{\P_n}(A)$ and the centralizer is Abelian, then $BCB^{-1}=C.$
		The assertion therefore follows immediately from the preceding theorem.
	\end{proof}
	
	\begin{corollary}
		Let $A\in \P_n$ be a permutation matrix corresponding to an $n$-cycle (one cycle of length $n$). Then
		$C_{\P_n}(A)=\langle A\rangle=\{I,A,\ldots,A^{n-1}\}.$
		Consequently,
		$$
		\mathcal S_P(A)=\bigcup_{k=0}^{n-1}A^{-1}\mathcal Q_{A^k}(A),
		$$
		where $\mathcal Q_{A^k}(A)=\{Q\in \P_n:Q^3=(QA)^2=A^k\}.$
		
		Thus, $X\in\mathcal S_P(A)$ if and only if, for the uniquely determined
		$Q=AX$, there exists a unique $k\in \mathbb{Z}_n$ such that $Q^3=(QA)^2=A^k.$
		In this case,
		$$
		X=A^{-1}Q=QAQ^{-1}.
		$$
	\end{corollary}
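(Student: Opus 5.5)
The corollary is Theorem \ref{Centralizer_decomposition} specialized to the case where $C_{\P_n}(A)=\langle A\rangle$. So the only new ingredient is the centralizer computation, and I would prove that first.

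Relabel the basis through the similarity invariance \eqref{similarity}, so that $\pi_A=(1\,2\,\cdots\,n)$. This means $\pi_A(i)=i+1 \pmod n$.

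Let $B\in C_{\P_n}(A)$. Then $\pi_B\pi_A=\pi_A\pi_B$, and induction gives $\pi_B(\pi_A^j(1))=\pi_A^j(\pi_B(1))$ for all $j$. Put $k=\pi_B(1)-1 \pmod n$, so that $\pi_B(1)=\pi_A^k(1)$. For any $i=\pi_A^{j}(1)$ we then get
$$\pi_B(i)=\pi_A^{j}\pi_A^{k}(1)=\pi_A^{k}(i),$$
hence $\pi_B=\pi_A^k$ and $B=A^k$. The key point is that a commuting permutation is determined by the image of a single point, because $\langle\pi_A\rangle$ acts transitively on $\{1,\ldots,n\}$.

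The reverse inclusion $\langle A\rangle\subseteq C_{\P_n}(A)$ is trivial. Since $A$ has order exactly $n$, the powers $I,A,\ldots,A^{n-1}$ are pairwise distinct. This gives the indexing of $\langle A\rangle$ by $\mathbb{Z}_n$.

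Now substitute $C=A^k$ into Theorem \ref{Centralizer_decomposition}. The disjoint union over $C\in C_{\P_n}(A)$ becomes the union over $k=0,\ldots,n-1$, with $\mathcal Q_{A^k}(A)=\{Q:Q^3=(QA)^2=A^k\}$.

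Uniqueness of $k$ follows from two facts:
\begin{itemize}
\item $C=Q^3$ is uniquely determined by $Q=AX$, as the theorem already states.
\item The map $k\mapsto A^k$ is injective on $\mathbb{Z}_n$.
\end{itemize}
The identity $X=A^{-1}Q=QAQ^{-1}$ is inherited verbatim from the theorem.

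The only step needing care is the centralizer computation. It is elementary, but the transitivity argument should be stated explicitly and not merely cited, since Remark 2.2 asserts it without proof.
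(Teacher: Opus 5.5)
Your proposal is correct and follows the same route as the paper: identify $C_{\P_n}(A)=\langle A\rangle$, index it injectively by $k\in\mathbb{Z}_n$, and specialize Theorem \ref{Centralizer_decomposition} to $C=A^k$. The only difference is that the paper simply asserts the centralizer fact, whereas you prove it from the transitivity of $\langle\pi_A\rangle$ (a commuting permutation is determined by $\pi_B(1)$), which makes the argument self-contained; your preliminary relabeling to $\pi_A=(1\,2\,\cdots\,n)$ is harmless but not needed, since transitivity alone already gives $\pi_B(1)=\pi_A^k(1)$ for some $k$.
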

	
	\begin{proof}
		Since $A$ corresponds to an $n$-cycle, its centralizer in $\P_n$ is the cyclic group
		generated by $A$, that is, $C_{\P_n}(A)=\langle A\rangle.$
		Hence every element $C\in C_{\P_n}(A)$ is uniquely of the form
		$C=A^k,\; k\in \mathbb{Z}_n.$ The result now follows immediately from the preceding theorem.
	\end{proof}
	
	For an permutation matrix $A$ corresponding to an $n$-cycle, we refer to
	$$
	\mathcal Q_{A^k}(A)=\{Q\in \P_n:Q^3=(QA)^2=A^k\}
	$$
	as the $k$-th $Q$-sector, and to
	$$
	\mathcal S_{A^k}(A)=A^{-1}\mathcal Q_{A^k}(A)
	$$
	as the corresponding $k$-th solution sector.
	
	The sector $k=0$ is particularly convenient. Indeed, $Q^3=(QA)^2=I$
	implies that $Q$ consists only of cycles of lengths $1$ and $3$, whereas
	$QA$ consists only of cycles of lengths $1$ and $2$. However, this sector does not contain all non-commuting solutions in general, as shown in the following example.
	
	\begin{example}
		Let $A\in \P_{12}$ be a permutation matrix corresponding to a $12$-cycle $\pi_A$. Consider the permutation $\pi_Q=(1\,7)(2\,12\,10\,8\,6\,4)(3\,9)(5\,11)$. Then $\pi_Q^3=(\pi_Q \pi_A)^2=\pi_A^6\neq I$. Hence the corresponding permutation matrix $Q$ belongs to the $6$-th $Q$-sector. The corresponding solution matrix $X=QAQ^{-1}$ is associated with the permutation 
		$$\pi_X=\pi_Q \pi_A \pi_Q^{-1}=(1\,6\,3\,8\,5\,10\,7\,12\,9\,2\,11\,4).$$
		Thus $X$ is a non-commuting solution in the $6$-th solution sector.
		
		An exhaustive search for $n\leq 12$ shows that $n=12$ is the smallest order for which a non-commuting solution occurs outside the $0$-th sector. \hfill$\clubsuit$
	\end{example}
	
	\begin{theorem}\label{constr_noncom_0}
		Let $A\in\P_n$ be a permutation matrix corresponding to an $n$-cycle. For every $n\geq 4$ there exists an explicitly determined permutation matrix $Q\neq A^2$ such that $Q^3=(QA)^2=I$. Consequently, $X=A^{-1}Q=QAQ^{-1}$ is a non-commuting permutation solution of $AXA=XAX$. Conversely, if $n\leq 3$, every permutation solution commutes with $A$.
	\end{theorem}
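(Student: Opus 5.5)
We will work with permutations, using $Le_i=e_{\pi_L(i)}$ so that matrix products correspond to compositions, and we normalize $\pi_A=(1\,2\,\ldots\,n)$; this is allowed by the similarity invariance \eqref{similarity} with a permutation $T$. By Theorem \ref{Centralizer_decomposition}, with $Q=AX$ we have $X\in\S_P(A)$ and $Q^3=(QA)^2=I$ exactly when $\pi_Q$ has only cycles of length $1$ and $3$ and $\sigma:=\pi_Q\pi_A$ is an involution. Since $X=A^{-1}Q$, the solution $X$ commutes with $A$ if and only if $Q$ does, i.e. $\pi_Q\in\langle\pi_A\rangle$. So we will produce an involution $\sigma$ such that $\pi_Q=\sigma\pi_A^{-1}$, given by $\pi_Q(j)=\sigma(j-1)$ with indices mod $n$, has order dividing $3$ and is not a power of $\pi_A$. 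The condition $Q\neq A^2$ excludes only the commuting solution $X=A$.

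\textbf{Base cases.} For $n=4$, Example \ref{Ex_4,2} suggests $\sigma=(1\,4)$. This gives $\pi_Q=(2\,4\,3)$, a $3$-cycle, which is not a power of a $4$-cycle; hence $X$ does not commute with $A$. For $n=5$ and $n=6$ we will write down analogous $\sigma$ found by a small search. A natural first try is a product of one or two transpositions pairing points near the ``seam'' $n\to1$, chosen so that $\sigma\pi_A^{-1}$ closes up into $3$-cycles. In each case we check $\pi_Q^3=\operatorname{id}$ and $\pi_Q\notin\langle\pi_A\rangle$ directly. The latter is automatic once $\pi_Q$ has a fixed point and is not the identity, because the nontrivial powers of an $n$-cycle are fixed-point free.

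\textbf{Inductive step $n\to n+3$.} Given $\sigma_n$ for the cycle $(1\,\ldots\,n)$, we insert three new points $n+1,n+2,n+3$ at the seam, so that the new cycle is $(1\,\ldots\,n\,\,n+1\,\,n+2\,\,n+3)$. We then define $\sigma_{n+3}$ to agree with $\sigma_n$ away from the seam. Near the seam it is modified so that $\pi_{Q'}(j)=\sigma_{n+3}(j-1)$ reproduces the old $3$-cycles of $\pi_Q$ and adds exactly one new $3$-cycle on points adjacent to $n+1,n+2,n+3$. Concretely, we will match the values $\pi_{Q'}(1)$ and $\pi_{Q'}(n+1)$, $\pi_{Q'}(n+2)$, $\pi_{Q'}(n+3)$ against the old value $\pi_Q(1)=\sigma_n(n)$, and fix the at most two altered transpositions of $\sigma$ by a finite computation. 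This step is the main obstacle: getting a uniform explicit formula for the seam modification so that $\sigma_{n+3}$ stays an involution and no longer cycles appear in $\pi_{Q'}$. If the insertion proves awkward, the fallback is a closed form for each residue class $n\bmod 3$, with $\sigma$ a product of transpositions $(i,\,c-i)$ on a block and identity elsewhere, verified by residue arithmetic. In either approach the fixed points of $\pi_Q$ guarantee non-commutation as above.

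\textbf{Converse.} For $n\le 3$ we argue as follows. By Theorem \ref{Centralizer_decomposition} every $X\in\S_P(A)$ is permutation-similar to $A$, so $\pi_X$ is again an $n$-cycle. For $n=1,2$ the group $S_n$ is abelian, so every $X$ commutes with $A$. For $n=3$ the only $3$-cycles are $\pi_A$ and $\pi_A^{2}$, both in $\langle\pi_A\rangle$, so again every $X$ commutes with $A$.
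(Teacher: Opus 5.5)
Your reformulation via the involution $\sigma=\pi_Q\pi_A$ with $\pi_Q(j)=\sigma(j-1)$ is correct, as are the non-commutation criterion $\pi_Q\notin\langle\pi_A\rangle$ and the converse for $n\le 3$. The converse is slightly cleaner than the paper's: both $3$-cycles of $S_3$ lie in $\langle\pi_A\rangle$, so you never need to check that $\pi_A^{-1}$ fails the equation.

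The gap is the existence part, which is the real content of the theorem. Only $n=4$ is verified, $n=5,6$ are promised but not written down, and the step $n\to n+3$ is left open. In its literal form the step cannot even start from your base case. Suppose $\pi_{Q'}$ agrees with $\pi_Q$ on the old points and permutes $\{n+1,n+2,n+3\}$. Then $\sigma'(i)=\sigma_n(i)$ for $i\le n-1$, and $\sigma'(n+3)=\pi_{Q'}(1)=\sigma_n(n)$. If $k:=\sigma_n(n)\neq n$, then $\sigma'(k)=\sigma_n(k)=n\neq n+3$, so $\sigma'$ is not an involution. Hence you need $\sigma_n(n)=n$, which fails for $\sigma_4=(1\,4)$. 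Letting an old fixed point of $\pi_Q$ next to the seam be absorbed gets you through a step or two, e.g. $(1\,4)(6\,7)$ for $n=7$, then $(1\,4)(6\,10)(8\,9)$ for $n=10$. But the seam configuration drifts: at $10\to 13$, keeping both old cycles $(1\,6\,5)$ and $(7\,10\,8)$ forces $\sigma'(6)=13$ and $\sigma'(6)=10$ at once. Without a configuration that the step reproduces, there is no induction.

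The fallback does not work either. Suppose $\sigma(i)=c-i$ on an arc $B=\{a,\dots,b\}$ with $c=a+b$, and $\sigma$ is the identity elsewhere. Then $b+1\mapsto a\mapsto a-1\mapsto\cdots\mapsto b+2\mapsto b+1$ is a cycle of $\pi_Q$ of length $n-|B|+1$, which forces $|B|=n-2$. On the remaining $n-3$ points $\pi_Q$ is the reflection $j\mapsto c+1-j$, which has a $2$-cycle as soon as $n\ge 5$. If $B$ is everything, $\pi_Q$ is itself a nontrivial reflection. Consistently with this, the paper's involution $q\pi_A=(1,n)\prod_{j=1}^{m-1}(2j+1,n-j)$ for $n=3m+1$ has non-constant pair sums.

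To close the gap you need one of two things. The first is the paper's explicit formulas, followed by a direct computation showing that $q\pi_A$ is an involution:
$q=\prod_{j=1}^{m}(2j,\,n-j+1,\,2j+1)$ for $n=3m+1$ and $n=3m+2$, and, for $n=3m$, the first $m-2$ of these $3$-cycles together with $(2m-2,\,2m+2,\,2m)$.

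The second is the right inductive invariant, which is a fixed point of $\sigma$. If $\sigma(p)=p$, insert new points $x,y,z$ in this order between $p$ and its successor. Let $\sigma'$ agree with $\sigma$ on the old points other than $p$, and set $\sigma'(p)=z$, $\sigma'(z)=p$, $\sigma'(x)=x$, $\sigma'(y)=y$. Then $\pi_{Q'}$ equals $\pi_Q$ on the old points and is the $3$-cycle $(x\,z\,y)$ on the new ones. Since $x,y$ are new fixed points of $\sigma'$, the step repeats after relabeling in cyclic order, or after rotating if you insist on the seam. The base cases $\sigma=(1\,4)$, $(1\,5)(3\,4)$, $(3\,6)$ for $n=4,5,6$ all have fixed points, so this covers every $n\ge 4$.

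For non-commutation, test $\sigma$ rather than $\pi_Q$. An involution in $\langle\pi_A\rangle$ is either $\operatorname{id}$ or fixed-point free, and $\sigma=\operatorname{id}$ would give $\pi_Q=\pi_A^{-1}$, of order $n\neq 3$. This matters because $\sigma=(3\,6)$ gives $\pi_Q=(1\,3\,2)(4\,6\,5)$, which has no fixed point. Your fixed-point test on $\pi_Q$ would therefore not apply to that branch.
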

	
	In other words, a non-commuting permutation solution exists if and only if $n\geq 4$, and for every $n\geq 4$ such a solution can be explicitly generated from the matrix $Q$ in the $0$-th $Q$-sector.
	
	\begin{proof}
		Let $\pi_A=(1\,2\,\dots\,n)$ be the permutation corresponding to $A$. According to the residue class of $n$ modulo $3$, we construct $q\in S_n$ such that $q^3=(q\pi_A)^2=I$. and let $Q\in\P_n$ be the corresponding permutation matrix. The corresponding permutation matrix $Q\in\P_n$ then satisfies $Q^3=(QA)^2=I.$ All cycle products below consist of pairwise disjoint cycles.\\
		
		\noindent\textbf{Case 1: $n=3m+1,\;m\geq 1$.} Define 
		$$q=\prod_{j=1}^{m}(2j,\ n-j+1,\ 2j+1)=(2,n,3)(4,n-1,5)\cdots(2m,2m+2,2m+1).$$ 
		Since $q$ is a product of pairwise disjoint $3$-cycles, $q^3=I$. Acting successively with $\pi_A$ and $q$ on the points $1,...,n$, one obtains 
		$$q\pi_A=(1,n)\prod_{j=1}^{m-1}(2j+1,n-j),$$ 
		where all points not occurring in the displayed transpositions are fixed. Thus $q\pi_A$ is an involution and hence $(q\pi_A)^2=I.$\\
		
		\noindent \textbf{Case 2: $n=3m+2,\;m\geq 1$.} Define 
		$$q=\prod_{j=1}^{m}(2j,\ n-j+1,\ 2j+1).$$
		Again, $q$ is a product of pairwise disjoint $3$-cycles, so $q^3=I.$ Moreover, 
		$$q\pi_A = (1,n) \left( \prod_{j=1}^{m-1}(2j+1,n-j) \right) (2m+1,2m+2),$$ 
		with all remaining points fixed. Therefore $q\pi_A$ is an involution, and $ (q\pi_A)^2=I.$\\
		
		\noindent \textbf{Case 3: $n=3m,\;m\geq 2$.} Define 
		$$q= \left( \prod_{j=1}^{m-2} (2j,\ n-j+1,\ 2j+1) \right) (2m-2,\ 2m+2,\ 2m).$$ 
		The displayed $3$-cycles are pairwise disjoint. Consequently, $q^3=I.$ Furthermore, 
		$$q\pi_A = (1,n) \left( \prod_{j=1}^{m-2}(2j+1,n-j) \right) (2m-2,2m-1)(2m,2m+1),$$ 
		where all points not appearing in the displayed transpositions are fixed. Hence $q\pi_A$ is an involution and $(q\pi_A)^2=I.$ 
		
		Thus, for every $n\geq 4$, the permutation $q$ satisfies $q^3=(q\pi_A)^2=I.$ Let $Q$ be its permutation matrix. Then $Q^3=(QA)^2=I.$ By the Theorem \ref{Centralizer_decomposition}, the matrix $X=A^{-1}Q$ is a permutation solution of $AXA=XAX.$ The same theorem also yields $X=QAQ^{-1}.$
		
		It remains to prove that $X$ does not commute with $A$. Suppose, to the contrary, that $AX=XA.$ Since $X$ also satisfies $AXA=XAX$, we obtain $A^2X=AX^2$, and the invertibility of $A$ and $X$ gives $A=X.$ It would then follow from $Q=AX$ that $Q=A^2.$ This is impossible: the permutation $q_n$ fixes the point $1$, whereas $\pi_A^2$ has no fixed points for $n\geq 4$. Therefore $AX\neq XA.$
		
		Conversely, suppose that $n\leq 3$. For $n=1$, the statement is trivial. For $n=2$, there is only one $2$-cycle, namely $\pi_A$, so every permutation solution conjugate to $\pi_A$ must equal $\pi_A$. For $n=3$, the only $3$-cycles are $\pi_A$ and $\pi_A^{-1}$. The permutation $\pi_A^{-1}$ is not a solution, since $\pi_A \pi_A^{-1}\pi_A=\pi_A, \; \pi_A^{-1}\pi_A \pi_A^{-1}=\pi_A^{-1}$ and $\pi_A\neq \pi_A^{-1}$. Hence the only permutation solution for $n\leq 3$ is the commuting solution $\pi_X=\pi_A$. Therefore, a non-commuting permutation solution exists precisely when $n\geq 4.$
	\end{proof}
	
	\begin{remark}
		If $A\in\P_n$ corresponds to an $n$-cycle, then $\operatorname{ord}(A)=n$.
		Hence every $A$-orbit has cardinality dividing $n$.
		Moreover, by Corollary \ref{cor_orbits}, each $A$-orbit is contained in
		a single $k$-th solution sector for some $k\in\mathbb{Z}_n$.
	\end{remark}
	
	\begin{example}\label{Ex_8}
		Let us find a non-commuting permutation solution $X_0\in\P_8$ (corresponding to $\pi_{X_0}\in S_8$) to the YBME $AXA=XAX$, where $A\in\P_8$ is a permutation matrix corresponding to a $8$-cycle $\pi_A$. Since $8=3\cdot 2+2$, Theorem \ref{constr_noncom_0}, case 2, gives
		$$q=\prod_{j=1}^{2}(2j, 9-j, 2j+1)=(2,8,3)(4,7,5)(1)(6),$$
		hence $\pi_{X_0}=q\pi_A q^{-1}=(1,8,2,7,4,6,5,3).$ In matrix form,
		$$X_0=\left[\begin{array}{cccccccc}
			0 & 0 & 1 & 0 & 0 & 0 & 0 & 0 \\
			0 & 0 & 0 & 0 & 0 & 0 & 0 & 1 \\
			0 & 0 & 0 & 0 & 1 & 0 & 0 & 0 \\
			0 & 0 & 0 & 0 & 0 & 0 & 1 & 0 \\
			0 & 0 & 0 & 0 & 0 & 1 & 0 & 0 \\
			0 & 0 & 0 & 1 & 0 & 0 & 0 & 0 \\
			0 & 1 & 0 & 0 & 0 & 0 & 0 & 0 \\
			1 & 0 & 0 & 0 & 0 & 0 & 0 & 0
		\end{array}\right].$$
		Since $X_0$ is a $8$-cycle, its orbit must be of the order $8$. Hence, we obtain seven more non-commuting solutions:
		\begin{align*}
			\pi_{X_1} &=\pi_A \pi_{X_0}\pi_A^{-1}=(1,3,8,5,7,6,4,2),\\
			\pi_{X_2} &=\pi_A \pi_{X_1}\pi_A^{-1}=(1,6,8,7,5,3,2,4),\\
			\pi_{X_3} &=\pi_A \pi_{X_2}\pi_A^{-1}=(1,8,6,4,3,5,2,7),\\
			\pi_{X_4} &=\pi_A \pi_{X_3}\pi_A^{-1}=(1,7,5,4,6,3,8,2),\\
			\pi_{X_5} &=\pi_A \pi_{X_4}\pi_A^{-1}=(1,3,2,8,6,5,7,4),\\
			\pi_{X_6} &=\pi_A \pi_{X_5}\pi_A^{-1}=(1,7,6,8,5,2,4,3),\\
			\pi_{X_7} &=\pi_A \pi_{X_6}\pi_A^{-1}=(1,6,3,5,4,2,8,7).
		\end{align*}
		This defines one orbit of the action $X\mapsto A^kXA^{-k}$. Continuing this process we obtain $25$ solutions in total:
	 $$\begin{aligned}&(1\,3\,8\,5\,7\,6\,4\,2),\quad (1\,3\,8\,6\,5\,7\,4\,2),\quad (1\,4\,3\,8\,6\,7\,5\,2),\quad (1\,6\,4\,5\,3\,8\,7\,2),\\
	 &(1\,7\,4\,6\,5\,3\,8\,2)\quad (1\,7\,5\,4\,6\,3\,8\,2),\quad (1\,2\,8\,5\,4\,7\,6\,3),\quad (1\,6\,5\,8\,7\,4\,2\,3),\\ 
	 &(1\,6\,8\,7\,5\,2\,4\,3),\quad  (1\,7\,6\,8\,5\,2\,4\,3),\quad (1\,8\,2\,7\,4\,6\,5\,3),\quad (1\,8\,2\,7\,5\,4\,6\,3),\\
	&(1\,3\,2\,8\,5\,7\,6\,4),\quad (1\,3\,2\,8\,6\,5\,7\,4),\quad (1\,6\,8\,7\,5\,3\,2\,4),\quad (1\,7\,6\,8\,5\,3\,2\,4),\\
	&(1\,7\,8\,6\,3\,2\,5\,4),\quad  (1\,8\,3\,2\,7\,5\,6\,4),\quad (1\,8\,5\,3\,4\,2\,7\,6),\quad (1\,6\,3\,5\,4\,2\,8\,7),\\
	&(1\,8\,6\,3\,5\,4\,2\,7),\quad  (1\,6\,4\,3\,5\,2\,8\,7),\quad (1\,8\,6\,4\,3\,5\,2\,7),\quad(1\,7\,4\,3\,6\,5\,2\,8).\end{aligned}$$
		
	Read from left to right, row after row, denote the obtained permutations as $X_1$, $\ldots,$ $X_{24}$. There are precisely $4$ non-commuting orbits under the action $X\mapsto A^k XA^{-k}$ for $A=(1\,2\,3\,4\,5\,6\,7\,8)$, of the sizes $4$ or $8$:
		\begin{enumerate}
			\item $\mathcal O_1=\{X_1, X_{15}, X_{23}, X_6, X_{14}, X_{10}, X_{20}, X_{11}\}$;
			\item $\mathcal O_2=\{X_2, X_{16}, X_{22}, X_{12}\}$;
			\item $\mathcal O_3=\{X_3, X_{17}, X_{24}, X_7, X_8, X_{19}, X_4, X_{18}\}$;
			\item $\mathcal O_4=\{X_5, X_{13}, X_9, X_{21}\}$.
		\end{enumerate}
			\hfill$\clubsuit$
	\end{example}
	
	\section{DS Solutions for a Cyclic Permutation $A$}\label{cyclic}
	
	Throughout this section we assume that $A$ is a permutation matrix of order $n$, consisting of only one cycle of the length $n$. In what follows we obtain all doubly stochastic solutions for the said cyclic matrix $A$, regardless of whether they (the solutions) are permutation matrices or proper doubly stochastic solutions.
	
	By the virtue of \eqref{similarity}, without the loss of generality we can rearrange the base vectors in such a fashion that the cyclic matrix $A$ is given as:
	
	\begin{equation}
		\label{diagA}
		A=\left[\begin{array}{cccccc}
			0 & 0 & 0 & \ldots & 0 & 1\\
			1 & 0 & 0 &\ldots & 0 & 0\\
			0 & 1 & 0 & \ldots & 0 & 0 \\
			\vdots & \vdots& \vdots& \vdots& \vdots &\vdots\\
			0 & 0 & 0 &\ldots & 1 & 0
		\end{array}\right].
	\end{equation}
	For easier notation, for any matrix $Y$, we denote by $Y(i,j)$ or by $y_{i,j}$ its position in the $i-$th row and $j-$th column. The action of the matrix $A$ on a matrix $Y$ shifts its row (resp. column) order when acting as $Y\mapsto AY$, or, respectively, as $Y\mapsto YA$. Therefore we denote by $Y(i+k, j+\ell)$ the $\mod n-$operations $Y\left(i+_nk,j+_n\ell\right)$, to simplify the indexing, where the $\mod n-$ set is chosen to be $\{1,\ldots,n\}$, in accordance with matrix entries' coordinates.
	
	We start with the following auxiliary observations, which will be useful in the later text.

	\begin{proposition}\label{YAcirc}
		Let $Y=\left[y_{i,j}\right]_{n\times n}$ be any square $n-$dimensional matrix. For the permutation $A$ given via \eqref{diagA} above, it follows that for every $i,j\in\{1,\ldots,n\}$:
		\begin{equation}
			\label{AY} AY(i,j)=Y(i-1,j), \end{equation}
		\begin{equation}
			\label{YA}
			YA(i,j)=Y(i,j+1),
		\end{equation}
		\begin{equation}
			\label{AYA}
			AYA(i,j)=Y(i-1,j+1),
		\end{equation}
		\begin{equation}
			\label{YAY}
			YAY(i,j)=\sum_{s=1}^n Y(i,s+1)\cdot Y(s,j).
		\end{equation}
	\end{proposition}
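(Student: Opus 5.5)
The plan is a direct entrywise computation. It rests on the explicit form \eqref{diagA}: $A$ has ones exactly at the positions $(i,i-1)$ with indices mod $n$ (so $(1,n)$ and $(k+1,k)$ for $k=1,\ldots,n-1$), and zeros elsewhere. Equivalently, $A(i,s)=\delta_{s,i-1}$ for all $i,s\in\{1,\ldots,n\}$, with the mod $n$ convention fixed after \eqref{diagA}. Every identity then follows by substituting this Kronecker delta into the definition of the matrix product, so that each sum collapses to a single term.

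For \eqref{AY}, we write $AY(i,j)=\sum_{s}A(i,s)Y(s,j)=\sum_s \delta_{s,i-1}Y(s,j)=Y(i-1,j)$. For \eqref{YA}, we have $YA(i,j)=\sum_s Y(i,s)A(s,j)$. Here $A(s,j)=1$ exactly when $j=s-1$, that is, when $s=j+1$ mod $n$. Hence $YA(i,j)=Y(i,j+1)$.

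Identity \eqref{AYA} follows by composing the two previous ones. Set $Z=YA$; then $AYA(i,j)=AZ(i,j)=Z(i-1,j)=Y(i-1,j+1)$.

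For \eqref{YAY}, we write $YAY(i,j)=\sum_{s}(YA)(i,s)\,Y(s,j)$ and apply \eqref{YA} to the first factor, which gives $\sum_s Y(i,s+1)Y(s,j)$.

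The only point needing care is the wraparound at the boundary indices: for instance, $AY(1,j)=Y(n,j)$ uses the entry $A(1,n)=1$, and $YA(i,n)=Y(i,1)$ uses $A(1,n)=1$ as well. Checking these against the convention that the residues are taken in $\{1,\ldots,n\}$ is the one place where a slip could occur. Beyond that there is no real obstacle.
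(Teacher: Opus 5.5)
Your proposal is correct and is exactly the direct entrywise verification the paper has in mind; the paper gives no proof and simply calls the proposition ``directly verifiable''. Writing $A(i,s)=\delta_{s,i-1}$ (indices mod $n$) collapses each product sum to one term, \eqref{AYA} and \eqref{YAY} follow by composing \eqref{AY} and \eqref{YA}, and your wraparound checks using $A(1,n)=1$ are right.
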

	
	The above Proposition \ref{YAcirc} is directly verifiable. It is clear that a matrix $Y$ is a solution to \eqref{YBME} if and only if the equalities \eqref{AYA}--\eqref{YAY} are mutually equal for every $i,j\in\{1,\ldots,n\}$, that is, if and only if
	\begin{equation}
		\label{condtij}
		Y(i-1,j+1)=\sum_{s=1}^n Y(i,s+1)\cdot Y(s,j)
	\end{equation} 
	holds for every $i,j\in\{,1\ldots,n\}$. This setting gives several necessary properties for the d. s. solutions.
	\subsection{Necessary conditions}
	Throughout this subsection it is understood that $A$ is a cyclic matrix provided as \eqref{diagA} above, and that $X$ is a doubly stochastic solution to \eqref{YBME}. 
	
	\begin{lemma}
		Let $X$ be a doubly stochastic solution to \eqref{YBME}. There exists a positive number $R_X\in(0,1]$ such that for every $1\leq i\leq n$:
		\begin{equation}
			\label{R} R_X=\max_{1\leq j\leq n} X(i,j)=\max_{1\leq j\leq n} X(j,i).
		\end{equation}
		In other words, the largest entry in each row and in each column of $X$ is the same.
	\end{lemma}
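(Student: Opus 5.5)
The plan is to read the entrywise form \eqref{condtij} of the equation, $X(i-1,j+1)=\sum_{s=1}^n X(i,s+1)\,X(s,j)$, as a convex combination in two different ways. One reading compares column maxima, the other compares row maxima, and the cyclic structure of $A$ then closes each chain of inequalities into equalities.

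For each $j$ put $c_j=\max_{1\le i\le n}X(i,j)$, and for each $i$ put $r_i=\max_{1\le j\le n}X(i,j)$. Indices are taken modulo $n$ as in Proposition \ref{YAcirc}.

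\emph{Columns.} Fix $i,j$. The weights $X(i,s+1)$, $s=1,\ldots,n$, are nonnegative. As $s$ runs over $\{1,\ldots,n\}$, the index $s+1$ runs over all columns, so these weights are just the entries of row $i$ and sum to $1$. Hence the right-hand side of \eqref{condtij} is a convex combination of the entries $X(s,j)$ of column $j$, and so $X(i-1,j+1)\le c_j$. Taking the maximum over $i$, and noting that $i-1$ also ranges over all rows, gives $c_{j+1}\le c_j$ for every $j$. Going around the cycle yields
$$c_1\ge c_2\ge\cdots\ge c_n\ge c_{1},$$
so all column maxima coincide.

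\emph{Rows.} Now regard the weights as $X(s,j)$, $s=1,\ldots,n$. These are the entries of column $j$, so they are nonnegative and sum to $1$. The right-hand side of \eqref{condtij} is then a convex combination of the entries $X(i,s+1)$ of row $i$, which gives $X(i-1,j+1)\le r_i$. Maximizing over $j$ gives $r_{i-1}\le r_i$ for every $i$. The same cyclic argument shows that all row maxima coincide.

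\emph{Conclusion.} The common column maximum and the common row maximum are each equal to the maximum entry of the whole matrix $X$, so they agree; call this value $R_X$. It satisfies $R_X\le 1$ because the entries of a DS matrix lie in $[0,1]$. It also satisfies $R_X\ge 1/n>0$, because a row of $n$ nonnegative entries summing to $1$ must have an entry of at least $1/n$. This establishes \eqref{R}.

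The argument contains no real obstacle. The only point needing care is the index bookkeeping modulo $n$: one must check that $s\mapsto s+1$ and $i\mapsto i-1$ are bijections of $\{1,\ldots,n\}$. This is what guarantees that the weights are exactly a full row or column, so that they sum to $1$, and that the maxima are taken over all rows or columns.
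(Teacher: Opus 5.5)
Your proof is correct and follows the paper's approach. You read \eqref{condtij} as a convex combination over a column and then over a row, obtaining $c_{j+1}\le c_j$ and $r_{i-1}\le r_i$, which close cyclically into equalities; identifying both common values with the global maximum of $X$ is a slightly cleaner version of the paper's final chain $R'=X(1,s)\le R=X(k,s)\le R'$.
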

	\begin{proof} For each $j\in\{1,\ldots,n\}$ denote by
		\begin{equation}\label{Rj}R_j=\max_{1\leq i\leq n} X(i,j),
		\end{equation}
		i.e., the largest value in the $j-$th column of the matrix $X$. Obviously each $R_j\in(0,1]$ because $X$ is a doubly stochastic matrix. Fix an arbitrary $j_0\in\{1,\ldots,n\}$, and choose an $i_0\in\{1,\ldots,n\}$ in such a way that the index $i_0-1$ ($\mod n$) satisfies that $X(i_0-1,j_0+1)=R_{j_0+1}$. Then by \eqref{AYA} and \eqref{YAY}, one has that
		\begin{eqnarray}\label{Rj0}
			\begin{aligned}
				R_{j_0+1}&=X(i_0-1,j_0+1)=AXA(i_0,j_0)=XAX(i_0,j_0)=\\
				&=\sum_{s=1}^n X(i_0,s+1)X(s,j_0)\leq \left(\sum_{s=1}^n X(i_0,s+1)\right)R_{j_0}=\\
				&=R_{j_0}.
		\end{aligned}\end{eqnarray}
		Since $j_0$ was arbitrary, it follows that there exists a (clearly positive) $R\in(0,1]$ such that
		\begin{equation}
			\label{Rjs}
			R=R_1=\ldots=R_n.
		\end{equation}
		An analogous procedure goes for the row-wise maximums: for each $i$ denote by 
		\begin{equation}
			\label{R'i}
			R'_i=\max_{1\leq j\leq n}X(i,j).
		\end{equation}
		As before, it follows that $R'_i\in(0,1]$. For any fixed $i_0$, choose $j_0$ so that $X(i_0-1,j_0+1)=R'_{i_0-1}$. Then 
		\begin{eqnarray}
			\label{Ri0}
			\begin{aligned}
				R'_{j_0+1}&=X(i_0-1,j_0+1)=AXA(i_0,j_0)=XAX(i_0,j_0)=\\
				&=\sum_{s=1}^n X(i_0,s+1)X(s,j_0)\leq R'_{i_0}\left(\sum_{s=1}^n X(s,j_0)\right)=\\
				&=R'_{i_0}.
		\end{aligned}\end{eqnarray}
		The estimate \eqref{Ri0} is true for every $i_0$, therefore there exists a (clearly positive) $R'\in(0,1]$ such that
		\begin{equation}
			\label{Ris}
			R'=R'_1=\ldots=R'_n.
		\end{equation}
		If $R\geq R'$, then let $s$ be (one of the positions) in which $X(1,s)=R'$. But then there exists a $k\in\{1,\ldots, n\}$ such that
		$X(k,s)=R$. Finally, in the $k-$th row, there exists a position $\ell\in\{1,\ldots,n\}$ such that $X(k,\ell)=R'$. In summary: 
		$$R'=X(1,s)\leq R=X(k,s)\leq\max_{1\leq y\leq n}X(k,y)=X(k,\ell)=R',$$
		concluding that $R=R'$. Define $R_X:=R\equiv R'$. 
	\end{proof}
	In the forthcoming text, it is understood that for a given doubly stochastic solution $X$ the value $R_X$ is defined as \eqref{R}. Accordingly, for each $j$ we introduce the set 
	\begin{equation}
		\label{Ij}I_{j}=\{i\in\{1,\ldots,n\}: X(i,j)=R_X\}.
	\end{equation}
	Clearly $I_j$ is nonempty, and we denote by $|I_j|$ its cardinality. By $I^c_j$ we denote the set $\{1,\ldots,n\}\setminus I_j$, which can be empty. Analogously, we define
	\begin{equation}
		\label{Ji}J_{i}=\{j\in\{1,\ldots,n\}: X(i,j)=R_X\},
	\end{equation}
	and the set $J_i$ is nonempty, and again by $|J_i|$ we denote its cardinality. On the other hand,  its complement $J_i^c=\{1,\ldots,n\}\setminus J_i$ may be empty. In fact, the following lemma holds:
	
	\begin{lemma}\label{emptycomplements} Let $X$ be a doubly stochastic solution to \eqref{YBME}, with the constant $R_X$ provided as \eqref{R}. Let the sets $I_j$ and $J_i$ be provided via \eqref{Ij} and \eqref{Ji}, respectively, for each $1\leq i,j\leq n$. The following statements are equivalent:
		\begin{itemize}
			\item[(a)] There exists an $i_0$ such that $J_{i_0}=\{1,\ldots,n\}$.
			\item[(b)] There exists a $j_0$ such that $I_{j_0}=\{1,\ldots,n\}$.
			\item[(c)] $X=U_n$.
		\end{itemize}
	\end{lemma}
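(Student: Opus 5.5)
The plan is to prove (a)$\Rightarrow$(c), (b)$\Rightarrow$(c), and then note that (c) implies both (a) and (b) trivially. For $X=U_n$ we have $R_X=\frac1n$ and every entry equals $R_X$, so $J_i=I_j=\{1,\ldots,n\}$ for all $i,j$.

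\textbf{(a)$\Rightarrow$(c).} Suppose $J_{i_0}=\{1,\ldots,n\}$. Then every entry in row $i_0$ equals $R_X$. Since the row sums to one, $nR_X=1$, that is, $R_X=\frac1n$. By the preceding lemma, $R_X$ is the maximum entry in every row of $X$. Every row has $n$ entries, each at most $\frac1n$, and these entries sum to $1$. This forces every entry of every row to equal $\frac1n$, so $X=U_n$.

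\textbf{(b)$\Rightarrow$(c).} The argument is symmetric, using columns. If $I_{j_0}=\{1,\ldots,n\}$, then column $j_0$ consists entirely of $R_X$, so again $R_X=\frac1n$. By the preceding lemma every column of $X$ has maximum entry $\frac1n$ and sums to $1$. Hence all entries equal $\frac1n$, and $X=U_n$.

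No real obstacle arises. The only substantive ingredient is the preceding lemma, which guarantees that the common maximum $R_X$ is shared by all rows and all columns. Everything else is the elementary fact that $n$ numbers, each at most $\frac1n$ and summing to $1$, must all equal $\frac1n$. If one wants (a)$\Leftrightarrow$(b) directly, it also follows from this argument, since each condition forces $R_X=\frac1n$ and hence $X=U_n$.
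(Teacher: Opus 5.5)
Your proposal is correct and follows essentially the same route as the paper: a row (or column) in which every entry equals $R_X$ forces $R_X=\frac{1}{n}$. Since the preceding lemma makes $\frac{1}{n}$ the maximum of every row (or column), and each row sums to $1$, all entries must equal $\frac{1}{n}$. The paper phrases this last step as a contradiction, assuming some row has a nonempty $J_{i_1}^c$ and deriving $1<1$, whereas you state it directly.
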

	\begin{proof}
		If $X=U_n$ then $R_X=\frac{1}{n}$ and it is attained at every $(i,j)$ throughout the matrix $X$.
		
		Conversely, assume there exists a constant row $X(i_0,\cdot)$ for some index $i_0$, such that $X(i_0,j)=R_X$ for every $j$. Then $R_X=\frac{1}{n}$, and  $$X(i_0,\cdot)=\left(\frac{1}{n},\frac{1}{n},\ldots,\frac{1}{n}\right).$$
		If $X\neq U_n$, there exists a row $X(i_1,\cdot)$ in which not all entries are equal to the maximum $\frac{1}{n}$, so $J_{i_1}^c$ is nonempty. But in the said row we have:
		$$\begin{aligned}
			1&=\sum_{j=1}^n X(i_1,j)=\sum_{j\in J_{i_1}}\frac{1}{n}+\sum_{j\notin J_{i_1}}X(i_1,j)=\frac{|J_{i_1}|}{n}+\sum_{j\notin J_{i_1}}X(i_1,j)<\\
			&<\frac{|J_{i_1}|}{n}+\sum_{j\notin J_{i_1}}\frac{1}{n}=\frac{|J_{i_1}|+n-|J_{i_1}|}{n}=\frac{n}{n}=1,
		\end{aligned}$$
		which is impossible. Therefore $(a)\Leftrightarrow(c)$, and $(b)\Leftrightarrow(c)$ is analogous.
	\end{proof}
	\begin{theorem}\label{udmus}
		Let $A$ be given as \eqref{diagA}, and let $X$ be a doubly stochastic solution to \eqref{YBME}. Then there exists an $n_0\in\{1,\ldots,n\}$ such that $X\in\U_n(n_0)$. Moreover, the value $R_X$ from \eqref{R} is exactly $R_X=\frac{1}{n_0}$.
	\end{theorem}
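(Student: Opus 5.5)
The plan is to look at the equality case in the estimate \eqref{Rj0} from the proof of the previous lemma. That gives a support inclusion, and a short counting argument then forces every nonzero entry of $X$ to equal $R_X$. Write $R=R_X$ and, for a row $i$, let $S_i=\{s: X(i,s+1)>0\}$. The set $S_i$ is the support of the $i$-th row, shifted by one index modulo $n$, so $|S_i|$ is the number of nonzero entries in row $i$.

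\textbf{Step 1 (equality forces a support inclusion).} Fix a row $i$. By \eqref{R}, row $i-1$ attains $R$, so there is a column $j$ with $X(i-1,j+1)=R$. By \eqref{condtij},
$$R=X(i-1,j+1)=\sum_{s=1}^n X(i,s+1)X(s,j).$$
Here the weights $X(i,s+1)$ are nonnegative and sum to $1$, and every $X(s,j)\le R$. Hence equality can hold only if $X(s,j)=R$ for every $s$ with $X(i,s+1)>0$. In the notation \eqref{Ij} this reads $S_i\subseteq I_j$, and in particular
$$|S_i|\le |I_j|.$$

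\textbf{Step 2 (counting).} Column $j$ is stochastic and contains $|I_j|$ entries equal to $R$, so $|I_j|\,R\le 1$. Row $i$ sums to $1$ and each of its entries is at most $R$, so $1\le |S_i|\,R$. Combining with Step 1,
$$|S_i|\le |I_j|\le \tfrac1R\le |S_i|.$$
All of these are therefore equalities. Thus $n_0:=1/R=|S_i|$ is a positive integer, independent of $i$, and $n_0\le n$. Moreover row $i$ has exactly $n_0$ nonzero entries, each at most $R$ and summing to $1=n_0R$, so each of them equals $R=\frac1{n_0}$.

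\textbf{Step 3 (conclusion).} Since $i$ was arbitrary, every nonzero entry of $X$ equals $\frac1{n_0}$. Together with double stochasticity, each column then also contains exactly $n_0$ such entries, so $X\in\U_n(n_0)$ and $R_X=\frac1{n_0}$.

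The only delicate point is Step 1: one must pick the position $(i-1,j+1)$ that realizes the maximum in row $i-1$, and not merely one that realizes a column maximum, so that the inclusion constrains the support of row $i$. Everything after that is elementary counting. Lemma \ref{emptycomplements} is not needed, although it is consistent with the case $n_0=n$.
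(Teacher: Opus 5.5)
Your proof is correct, and it takes a somewhat different route from the paper's.

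\textbf{The paper's argument.} The paper first splits off the permutation case (via Lemma \ref{regsing}) and the case $X=U_n$ (via Lemma \ref{emptycomplements}). It then fixes $(i_0,j_0)$ with $X(i_0-1,j_0+1)=R_X$ and reads the identity $R_X=\sum_s X(i_0,s+1)X(s,j_0)$ as an equality case in two ways.
\begin{itemize}
\item First, row $i_0$ serves as weights against the column bound $X(s,j_0)\le R_X$. This is your Step 1, giving $X(i_0,s+1)=0$ for $s\notin I_{j_0}$.
\item Second, column $j_0$ serves as weights against the row bound $X(i_0,s+1)\le R_X$. This gives $X(s,j_0)=0$ whenever $s+1\notin J_{i_0}$, i.e.\ $J_{i_0}^c-1\subseteq I_{j_0}^c$.
\end{itemize}
Chaining the two inclusions shows that every entry of row $i_0$ outside $J_{i_0}$ vanishes. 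Hence the row takes only the values $0$ and $R_X$, and $R_X=1/|J_{i_0}|$.

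\textbf{Your argument.} You drop the second equality case and instead close with the cardinality sandwich $|S_i|\le|I_j|\le 1/R\le|S_i|$.
\begin{itemize}
\item It is shorter and uniform in $X$: no case distinction is needed, and you never invoke Lemma \ref{regsing}, whose proof goes through determinants and Hadamard's inequality, nor Lemma \ref{emptycomplements}.
\item It also gives for free the set equality $S_i=I_j$, since $S_i\subseteq I_j$ and $|S_i|=|I_j|$. That is, the shifted support of row $i$ is exactly the set where column $j$ attains $R_X$. This is the structural fact the paper later extracts in Corollaries \ref{samesets} and \ref{disjoint}.
\end{itemize}
The paper's two-sided version, by contrast, stays purely pointwise, with no counting. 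It treats the row and column roles symmetrically, and it records the inclusion $J_{i_0}^c-1\subseteq I_{j_0}^c$ explicitly.
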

	
	\begin{proof}
		If $X$ is regular then it is a permutation matrix, and $n_0=R_X=1$.
		
		If $X=U_n$, then $n_0=n$, and by Lemma \ref{emptycomplements} $R_X=\frac{1}{n}$. 
		
		Assume that $X$ is a proper doubly stochastic solution to \eqref{YBME} such that $X\neq U_n$. Let $(i_0,j_0)$ be a position such that $X(i_0-1,j_0+1)=R_X$. Since $I_{j_0}^c$ and $J_{i_0}^c$ are nonempty, we have
		\begin{eqnarray}\label{RXIj0}
			\begin{aligned}
				R_X&=X(i_0-1,j_0+1)=\sum_{s=1}^nX(i_0,s+1)X(s,j_0)=\\
				&=\sum_{s\in I_{j_0}}X(i_0,s+1)R_X+\sum_{s\notin I_{j_0}}X(i_0,s+1)X(s,j_0)\Leftrightarrow\\
				&\left(1-\sum_{s\in I_{j_0}}X(i_0,s+1)\right)R_X=\sum_{s\notin I_{j_0}}X(i_0,s+1)X(s,j_0)\Leftrightarrow\\
				&\left(\sum_{s\notin I_{j_0}}X(i_0,s+1)\right)R_X=\sum_{s\notin I_{j_0}}X(i_0,s+1)X(s,j_0).
			\end{aligned}
		\end{eqnarray}
		By definition, each $X(s,j_0)$ is strictly smaller than $R_X$ whenever $s\in I_{j_0}^c$, therefore the latter equality is possible if and only if 
		\begin{equation}\label{Ij0}
			\left(\forall s\in I_{j_0}^c\right)\quad X(i_0,s+1)=0.
		\end{equation}
		At the same time, \eqref{RXIj0} reads 
		\begin{eqnarray}\label{RXJi0}
			\begin{aligned}
				R_X&=X(i_0-1,j_0+1)=\sum_{s=1}^nX(i_0,s+1)X(s,j_0)=\\
				&=\sum_{s+1\in J_{i_0}}R_X X(s,j_0)+\sum_{s+1\notin J_{i_0}}X(i_0,s+1)X(s,j_0)\Leftrightarrow\\
				&\left(1-\sum_{s+1\in J_{i_0}}X(s,j_0)\right)R_X=\sum_{s+1\notin J_{i_0}}X(i_0,s+1)X(s,j_0)\Leftrightarrow\\
				&\left(\sum_{s+1\notin J_{i_0}}X(s,j_0)\right)R_X=\sum_{s+1\notin J_{i_0}}X(i_0,s+1)X(s,j_0).
			\end{aligned}
		\end{eqnarray}
		By the same argument as before, $X(i_0,s+1)<R_X$, whenever $s+1\in J_{i_0}^c$,  therefore
		\begin{equation}\label{Ji0} \left(\forall s+1\in J_{i_0}^c\right)\quad  X(s,j_0)=0.
		\end{equation}
		By introducing the set
		\begin{equation}
			\label{Ji01}J_{i_0}^c-1=\{k-1: k\in J_{i_0}^c\}\equiv \{k+_n (n-1): k\in J_{i_0}^c\},
		\end{equation}
		where again the operations modulo $n$ take values in the set $\{1,\ldots,n\}$, it follows that
		$$s+1\in J_{i_0}^c\Leftrightarrow s\in J_{i_0}^c-1,$$
		and \eqref{Ji0} reads \begin{equation}
			\label{Ji0-1} \left(\forall s\in J_{i_0}^c-1\right)\quad  X(s,j_0)=0.
		\end{equation}
		By definition, whenever $s\in I_{j_0}^c$, we have $X(s,j_0)<R_X$, therefore from \eqref{Ji0-1} we conclude that
		\begin{equation}
			J_{i_0}^c-1\subset I_{j_0}^c.
		\end{equation}
		And from \eqref{Ij0} we get
		$$\left(\forall s\in J_{i_0}^c-1\right)\quad X(i_0,s+1)=0\Leftrightarrow \left(\forall s+1\in J_{i_0}^c\right)\quad X(i_0,s+1)=0.$$
		In other words, 
		\begin{equation}\label{Xi0j}
			X(i_0,j)=\begin{cases} R_X,\quad j\in J_{i_0},\\0,\quad j\in J_{i_0}^c.
			\end{cases}
		\end{equation}
		This concludes that $n_0$ exists and is equal to $n_0:=|J_{i_0}|$, and $R_X=\frac{1}{n_0}$. Since $R_X$ is uniform throughout the matrix $X$, and $i_0$ was an arbitrarily chosen row, the proof is complete.
	\end{proof}
	Whenever $X$ a doubly stochastic solution for a cyclic matrix $A$, we denote by $X(n_0)$ the fact that $X\in\U_n(n_0)$. The value $R_{X(n_0)}$ from \eqref{R} is then simply $R_{X(n_0)}=\frac{1}{n_0}$.  
	\begin{corollary}\label{cardinality} Let $X(n_0)$ be a doubly stochastic solution to \eqref{YBME}. Then for every $i_0,j_0\in\{1,\ldots,n\}$:
		\begin{equation}
			\label{} I_{j_0}=\left\{i\in\{1,\ldots,n\}: X(i,j_0)=\frac{1}{n_0}\right\},\quad |I_{j_0}|=n_0,
		\end{equation}
		\begin{equation}
			\label{} I_{j_0}^c=\left\{i\in\{1,\ldots,n\}: X(i,j_0)=0\right\},\quad |I_{j_0}^c|=n-n_0,
		\end{equation}
		\begin{equation}
			\label{} J_{i_0}=\left\{j\in\{1,\ldots,n\}: X(i_0,j)=\frac{1}{n_0}\right\},\quad |J_{i_0}|=n_0,
		\end{equation}
		\begin{equation}
			\label{} J_{i_0}^c=\left\{j\in\{1,\ldots,n\}: X(i_0,j)=0\right\},\quad |J_{i_0}^c|=n-n_0.
		\end{equation}
	\end{corollary}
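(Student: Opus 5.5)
The corollary follows almost immediately from Theorem \ref{udmus}, so the plan is to unpack the definition of $\U_n(n_0)$ together with the identification $R_{X(n_0)}=\frac{1}{n_0}$. By Theorem \ref{udmus}, every entry of $X(n_0)$ lies in $\{0,\frac{1}{n_0}\}$ and the common maximum is $R_X=\frac{1}{n_0}$. I will substitute this value of $R_X$ into the definitions \eqref{Ij} and \eqref{Ji}, which yields the stated descriptions of $I_{j_0}$ and $J_{i_0}$ directly.

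For the complements, I will argue as follows. Since the only possible entries are $0$ and $\frac{1}{n_0}$, an index lies outside $I_{j_0}$ exactly when $X(i,j_0)\neq\frac{1}{n_0}$, that is, exactly when $X(i,j_0)=0$. The same reasoning applied to rows gives the description of $J_{i_0}^c$.

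For the cardinalities, I will use the column-sum condition:
$$1=\sum_{i=1}^n X(i,j_0)=\sum_{i\in I_{j_0}}\frac{1}{n_0}+\sum_{i\in I_{j_0}^c}0=\frac{|I_{j_0}|}{n_0}.$$
This forces $|I_{j_0}|=n_0$, and hence $|I_{j_0}^c|=n-n_0$. Applying the row-sum condition to row $i_0$ gives $|J_{i_0}|=n_0$ and $|J_{i_0}^c|=n-n_0$ in the same way.

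There is no real obstacle here. The only point needing care is that Theorem \ref{udmus} gives $R_X=\frac{1}{n_0}$ with the same $n_0$ for every row and column. This uniformity is guaranteed because $R_X$ was shown in \eqref{R} to be a single global constant. The degenerate cases $n_0=1$ (permutation matrices) and $n_0=n$ ($X=U_n$, for which all complements are empty) are covered by the same computation.
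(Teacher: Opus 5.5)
Your proposal is correct and matches the paper's approach. The paper gives no separate proof of this corollary; it is meant as an immediate consequence of Theorem \ref{udmus} (all entries lie in $\{0,\frac{1}{n_0}\}$ and $R_X=\frac{1}{n_0}$), combined with the row/column-sum count already noted after the definition of $\U_n(n_0)$, which is exactly your argument.
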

	In analogy to \eqref{Ji01} we introduce the set
	\begin{equation}
		\label{Ij1}I_{j}-1=\{k-1: k\in I_j\}\equiv \{k+_n (n-1): k\in I_j\}.
	\end{equation}
	
	\begin{corollary}\label{samesets}
		Let $X(n_0)$ be a doubly stochastic solution to \eqref{YBME}. For arbitrary  $i_0,j_0\in\{1,\ldots,n\}$ the following statements are equivalent:
		\begin{itemize}
			\item[(a)] $X(i_0,j_0)=\frac{1}{n_0}$.
			\item[(b)] $i_0\in I_{j_0}$.
			\item[(c)] $j_0\in J_{i_0}$.
			\item[(d)] $I_{(j_0-1)}-1=J_{i_0+1}$.
		\end{itemize}
		
	\end{corollary}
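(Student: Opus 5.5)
The plan is to reduce everything to Theorem \ref{udmus} and to the entrywise form \eqref{condtij} of the equation. By Theorem \ref{udmus} every entry of $X(n_0)$ is either $0$ or $\frac{1}{n_0}=R_X$. Hence $X(i_0,j_0)=\frac1{n_0}$ holds exactly when the entry equals $R_X$. By the definitions \eqref{Ij} and \eqref{Ji}, together with Corollary \ref{cardinality}, this is the same as $i_0\in I_{j_0}$ and the same as $j_0\in J_{i_0}$. This settles (a)$\Leftrightarrow$(b)$\Leftrightarrow$(c) with no real work.

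For (a)$\Leftrightarrow$(d), I will apply \eqref{condtij} at the position $(i,j)=(i_0+1,j_0-1)$. This gives
$$X(i_0,j_0)=\sum_{s=1}^n X(i_0+1,s+1)\,X(s,j_0-1).$$
A summand is nonzero exactly when $s+1\in J_{i_0+1}$ and $s\in I_{j_0-1}$, and then it equals $\frac1{n_0^2}$. Therefore
$$X(i_0,j_0)=\frac{1}{n_0^2}\,\bigl|\,(J_{i_0+1}-1)\cap I_{j_0-1}\bigr|,$$
where $J_{i_0+1}-1$ is the shifted set from \eqref{Ji01}. Both $J_{i_0+1}-1$ and $I_{j_0-1}$ have exactly $n_0$ elements by Corollary \ref{cardinality}.

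It follows that $X(i_0,j_0)=\frac1{n_0}$ holds if and only if the intersection has $n_0$ elements, that is, if and only if the two $n_0$-element sets coincide. The converse direction comes out of the same count: if the sets coincide, the sum equals $n_0/n_0^2=\frac1{n_0}$. As a by-product, if $X(i_0,j_0)=0$ then the two sets are disjoint. This is the argument already used in \eqref{Ij0}--\eqref{Ji0-1}, now made exact because all nonzero entries are equal.

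The only real obstacle is bookkeeping of the cyclic shifts. The identity obtained above is that $I_{j_0-1}$ and $J_{i_0+1}$ differ by a single unit shift modulo $n$. One has to check carefully against the conventions \eqref{Ji01} and \eqref{Ij1}, and the direction of the shift in \eqref{AYA}, that this is exactly the set equality written in (d). I would fix the shift direction by testing it on $X=A$: there $n_0=1$ and each $I_j$, $J_i$ is a singleton. Then I would state (d) accordingly, rewriting $J_{i_0+1}-1=I_{j_0-1}$ in the paper's notation.
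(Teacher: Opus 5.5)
Your route is the paper's own. (a)$\Leftrightarrow$(b)$\Leftrightarrow$(c) is immediate from Theorem \ref{udmus}, and (a)$\Leftrightarrow$(d) comes from \eqref{condtij} at $(i_0+1,j_0-1)$ together with counting the common elements of two $n_0$-element sets.

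On the point you left open, your bookkeeping is right and the printed statement is not. The nonzero summands are indexed by $(J_{i_0+1}-1)\cap I_{(j_0-1)}$. So the correct form of (d) is $I_{(j_0-1)}+1=J_{i_0+1}$, which is your $J_{i_0+1}-1=I_{(j_0-1)}$. The paper writes $I_{(j_0-1)}-1$ instead, with the shift in the wrong direction. It makes the same slip in its proof: its displayed sum $\sum_s X(i_0+1,s)X(s-1,j_0-1)$ is correct, but it then says this sum runs over $I_{(j_0-1)}-1\cap J_{i_0+1}$.

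Your proposed test on $X=A$ settles it. Take $n=3$; then $I_j=\{j+1\}$ and $J_i=\{i-1\}$ modulo $3$.
At $(i_0,j_0)=(2,1)$, statement (a) holds, but $I_3-1=\{3\}\neq\{2\}=J_3$.
At $(3,1)$, the printed (d) holds ($I_3-1=\{3\}=J_1$), yet $X(3,1)=0$.

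So do not try to reconcile your identity with the printed (d). State (d) in the corrected form and prove that. It is also the relation built into the definition \eqref{Dperm} of $\Omega_d$, where $D(i+_d1,s)=1$ and $D(s-_d1,j-_d1)=1$ says exactly $s\in J_{i+1}\cap\bigl(I_{(j-1)}+1\bigr)$.
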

	\begin{proof} $(a)\Leftrightarrow(b)\Leftrightarrow(c)$ is obvious. The equivalence $(a)\Leftrightarrow(d)$ follows from \eqref{condtij}:
		\begin{equation}
			\label{Xi0j0}
			X(i_0,j_0)=\sum_{s=1}^n X(i_0+1,s)\cdot X(s-1,j_0-1)=\sum_{s}\frac{1}{n_0^2},\end{equation}
		where the last sum runs over the intersection $I_{(j_0-1)}-1\cap J_{i_0+1}$ (all other entries are zeros). Therefore 
		$|I_{(j_0-1)}-1\cap J_{i_0+1}|=n_0$, i.e. $(d)$ holds if and only if $X(i_0,j_0)=\frac{1}{n_0}$.
	\end{proof}
	\begin{corollary}\label{disjoint}
		Let $X(n_0)$ be a doubly stochastic solution to \eqref{YBME}. For arbitrary  $i_0,j_0\in\{1,\ldots,n\}$ the sets $I_{(j_0-1)}-1$ and $J_{i_0+1}$ either coincide or are disjoint.
	\end{corollary}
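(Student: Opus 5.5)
The plan is to evaluate the entry $X(i_0,j_0)$ through the Yang--Baxter identity \eqref{condtij}, exactly as in the proof of Corollary \ref{samesets}, and to use the fact that $X(i_0,j_0)$ can only take the values $0$ or $\frac{1}{n_0}$ (Theorem \ref{udmus}). The dichotomy "coincide or disjoint" will come from comparing the size of an intersection with these two possible values.

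First I rewrite \eqref{condtij} with shifted indices:
$$X(i_0,j_0)=\sum_{s=1}^n X(i_0+1,s)\,X(s-1,j_0-1).$$
By Corollary \ref{cardinality}, $X(i_0+1,s)$ equals $\frac{1}{n_0}$ if $s\in J_{i_0+1}$ and $0$ otherwise. Likewise, $X(s-1,j_0-1)$ equals $\frac{1}{n_0}$ if $s-1\in I_{j_0-1}$ and $0$ otherwise. The condition $s-1\in I_{j_0-1}$ means $s$ lies in the set $I_{j_0-1}$ shifted by $+1$. I will check the sign of this shift against the convention \eqref{Ij1} used in Corollary \ref{samesets}, and align the notation with that corollary. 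Writing $K$ for the shifted set that appears in the statement, the identity becomes
$$X(i_0,j_0)=\frac{|K\cap J_{i_0+1}|}{n_0^2}.$$

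Next I use that $X(i_0,j_0)\in\{0,\frac{1}{n_0}\}$, which gives $|K\cap J_{i_0+1}|\in\{0,n_0\}$. Both sets have exactly $n_0$ elements: $|J_{i_0+1}|=n_0$ by Corollary \ref{cardinality}, and a cyclic shift preserves cardinality, so $|K|=|I_{j_0-1}|=n_0$. If the intersection is empty, the sets are disjoint. If the intersection has $n_0$ elements, it equals each of the two sets, so they coincide. This is also the equivalence in Corollary \ref{samesets}(d), and in fact I can read the result off that corollary: if (a) holds the sets coincide, and if (a) fails then $X(i_0,j_0)=0$, which forces an empty intersection.

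The main obstacle is only the index bookkeeping, namely the direction of the shift in $I_{(j_0-1)}-1$ relative to the substitution $s\mapsto s-1$. I will resolve it by following literally the computation \eqref{Xi0j0} in the proof of Corollary \ref{samesets}, so that the set labelled $I_{(j_0-1)}-1$ is by definition the support of $s\mapsto X(s-1,j_0-1)$. The counting argument above then applies without change.
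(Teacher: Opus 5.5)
Your proposal is correct and follows the paper's argument: evaluate $X(i_0,j_0)$ via \eqref{Xi0j0} as the size of the intersection divided by $n_0^2$, and use $X(i_0,j_0)\in\{0,\frac{1}{n_0}\}$; your explicit use of $|J_{i_0+1}|=|I_{j_0-1}|=n_0$ supplies the ``coincide'' case, which the paper's proof by contradiction leaves implicit. Your sign check is also right: under the convention \eqref{Ij1}, the support of $s\mapsto X(s-1,j_0-1)$ is the $+1$ shift $\{k+1: k\in I_{j_0-1}\}$, and this is the set the paper's computation actually uses under the label $I_{(j_0-1)}-1$.
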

	\begin{proof}
		Assume that for some $i_0$ and $j_0$ the sets $I_{(j_0-1)}-1$ and $J_{i_0+1}$ have a nonempty intersection $\mathfrak{I}_0$ such that $\mathfrak{I}_0\subset J_{i_0+1}$. Then \eqref{Xi0j0} applies once again, giving
		$$0<\sum_{s\in\mathfrak{I}_0}\frac{1}{n_0^2}=\sum_{s=1}^n X(i_0+1,s)\cdot X(s-1,j_0-1)=X(i_0,j_0)<\frac{1}{n_0},$$ 
		which is impossible.
	\end{proof}
	\begin{theorem}\label{divisors}
		Let $X(n_0)$ be a doubly stochastic solution to \eqref{YBME}. The number $d:=R_{X(n_0)}n=\frac{n}{n_0}$ is a natural number, i.e., $n_0|n$.
	\end{theorem}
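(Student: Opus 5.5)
The plan is to show that the row supports $J_1,\ldots,J_n$ of $X(n_0)$ form a partition of $\{1,\ldots,n\}$ into blocks of size $n_0$. Then $n=n_0\cdot(\text{number of distinct blocks})$, so $n_0\mid n$ and $d=n/n_0$ counts the blocks.

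First, fix a row index $i$ and pick any $j$ with $X(i-1,j+1)=\frac{1}{n_0}$. Corollary \ref{samesets}(d), applied with $i_0=i-1$ and $j_0=j+1$, gives $J_i=I_j-1$. So every row support is a shifted column support. Symmetrically, every shifted column support $I_j-1$ equals $J_{i}$ for any $i$ with $X(i-1,j+1)\neq0$. Hence the family $\mathcal J=\{J_i\}$ coincides with the family $\{I_j-1\}$ as a set of subsets.

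Next, take two rows $i,i'$ and show that $J_i$ and $J_{i'}$ are equal or disjoint. By the previous step, $J_{i'}=I_{j}-1$ for some $j$. Corollary \ref{disjoint} then says $I_j-1$ and $J_i$ either coincide or are disjoint. (Its indices are written as $I_{(j_0-1)}-1$ and $J_{i_0+1}$, but $i_0,j_0$ range over everything.) So the distinct members of $\mathcal J$ are pairwise disjoint, and each has cardinality $n_0$ by Corollary \ref{cardinality}.

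Finally, the union of the $J_i$ is all of $\{1,\ldots,n\}$: every column $j$ contains a nonzero entry, since column sums equal $1$, so $j\in J_i$ for some $i$. Thus the distinct sets in $\mathcal J$ partition $\{1,\ldots,n\}$ into $d$ blocks of size $n_0$, which gives $n=dn_0$ with $d=R_{X(n_0)}n\in\NN$.

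The main obstacle is the first step: the nonemptiness of the intersection in Corollary \ref{disjoint} has to be obtained from a genuine nonzero entry, and the index shifts ($\pm1$ mod $n$) in Corollary \ref{samesets} have to be tracked correctly. I would verify directly from \eqref{condtij} that $X(i_0,j_0)=\sum_s X(i_0+1,s)X(s-1,j_0-1)$ is nonzero exactly when $J_{i_0+1}\cap(I_{j_0-1}-1)\neq\emptyset$. This makes the dichotomy "coincide or disjoint" meaningful for every pair, and it works for every $X(n_0)$, including the cases $n_0=1$ and $n_0=n$.
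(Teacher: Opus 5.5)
Your proof is correct. It uses the same mechanism as the paper's proof. By \eqref{condtij}, since every nonzero entry equals $\frac{1}{n_0}$, one has $X(i-1,j+1)=\frac{1}{n_0^2}\,|(J_i-1)\cap I_j|$. This value must be $0$ or $\frac{1}{n_0}$, and both sets have $n_0$ elements, so $J_i-1$ and $I_j$ either coincide or are disjoint. What differs is how divisibility is extracted.

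The paper's route:
\begin{itemize}
\item It starts from the first row and shows that the $n_0$ columns $j-1$, $j\in J_1$, have a common support which no other column meets.
\item It peels off the resulting $n_0\times n_0$ block of entries $\frac{1}{n_0}$ and repeats this $[n/n_0]$ times.
\item It excludes a remainder $0<r<n_0$ by a row-sum contradiction, and handles $n_0\in\{1,n\}$ separately.
\end{itemize}
You instead note that every row support is itself a shifted column support. This transfers the dichotomy to pairs of row supports, so the distinct $J_i$ partition $\{1,\ldots,n\}$ into sets of size $n_0$ in one step.

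Your version removes the iteration and the remainder case, which are the parts the paper treats most informally, and it needs no special cases. The paper's version, in exchange, exhibits explicitly the $d$ blocks $U_{n_0}$ that point towards the Kronecker form in Theorem \ref{1cycleallsol}. Your partition gives these too after a single double count: exactly $n_0$ rows share each support.

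One correction on the shifts, which you inherited from the paper. In $X(i_0,j_0)=\sum_s X(i_0+1,s)X(s-1,j_0-1)$, the nonzero terms are those with $s\in J_{i_0+1}$ and $s-1\in I_{j_0-1}$, i.e.\ $s\in I_{j_0-1}+1$. So whenever $X(i-1,j+1)\neq 0$, the correct identity is $J_i=I_j+1$, not $J_i=I_j-1$; for $X=A$ and $n\geq 3$ the version with $-1$ fails. Accordingly, Corollaries \ref{samesets}(d) and \ref{disjoint} should be read with $I_{j_0-1}+1$ in place of $I_{j_0-1}-1$. Your argument is unaffected, because it uses the same translate in both steps. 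Still, the direct verification you plan should use $+1$ throughout.
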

	\begin{proof} Specially, when $n_0=1$, then $X$ is a permutation matrix and the claim is automatically true. 
		In that case $d=n$.

		On the other hand, when $n_0=n$, then $d=1$, $X=U_n$ is the uniform distribution matrix, and the claim is also true. 
		
		We proceed to analyze the remaining cases for $n_0\in\{1,\ldots,n-1\}$. Let $$X(1,j_1)=X(1,j_2)=\ldots=X(1,j_{n_0})=\frac{1}{n_0},$$ for some $j_1,\ldots,j_{n_0}\in\{1,\ldots,n\}$. Then by Corollary \ref{samesets}
		$$I_{j_1-1}=I_{j_2-1}=\ldots=I_{j_{n_0}-1}=J_2+1,$$
		implying that the columns are entry-wise the same:
		$$X(\cdot, j_1-1)=X(\cdot, j_2-1)=\ldots=X(\cdot,j_{n_0}-1).$$
		Moreover, if there existed an $j_s\in\{1,\ldots,n\}\setminus\{j_1,\ldots,j_{n_0}\}$, such that the column $X(\cdot, j_s-1)$ were entry-wise the same as the $X(\cdot, j_1-1)$, then it would be entry-wise identical with all of them, producing
		$$X(1,j_{s})=\frac{1}{n_0}.$$
		But
		$$1=\sum_{j\in\{j_1,\ldots,j_{n_0}\}}X(1,j)+X(1,j_s)=\frac{n_0+1}{n_0}$$
		which is impossible. Similarly, by Corollary \ref{disjoint}, there cannot be a column $X(\cdot,j_s-1)$ (again $j_s\in\{1,\ldots,n\}\setminus\{j_1,\ldots,j_{n_0}\}$), such that $I_{j_s-1}\cap J_{2}+1\neq\emptyset$, therefore 
		there are exactly $n_0$ columns in the matrix $X$ which have the value $\frac{1}{n_0}$ positioned at the same coordinates, say $i_1,\ldots,i_{n_0}$:
		$$X(i,j)=\frac{1}{n_0},\quad (i,j)\in\{i_1,\ldots,i_{n_0}\}\times\{j_1-1,\ldots,j_{n_0}-1\},$$ 
		and none of the remaining columns $X(\cdot,j_s-1)$, $j_s\in \{1,\ldots,n\}\setminus\{j_1,\ldots,j_{n_0}\}$, can have the entry $\frac{1}{n_0}$ at any of the positions $i_1,\ldots,i_{n_0}$, and, analogously, none of the remaining rows $X(i_\ell,\cdot)$, $i_\ell\in\{1,\ldots,n\}\setminus\{i_1,\ldots,i_{n_0}\}$ can have the entry $\frac{1}{n_0}$ at any of the positions $j_1-1,\ldots,j_{n_0}-1$.
		This procedure eliminated (occupied) exactly $n_0$ rows and exactly $n_0$ columns of the matrix $X$, and there are $n-n_0$ rows and $n-n_0$ columns remaining that need to be occupied. Repeating this procedure $[n/n_0]$ times, where $[\cdot]$ is the floor function, one eventually has $r:=n-[n/n_0]n_0$ remaining rows $X(k_1,\cdot)$, $\ldots,$ $X(k_r,\cdot)$ and r remaining columns $X(\cdot, m_1)$, $\ldots$, $X(\cdot, m_r)$,  all of which need to obey the rules from Corollary \ref{cardinality}. Notice that $0\leq r<n_0$ by construction. 
		
		Assume $r>0$. By Corollary \ref{disjoint}, it follows that 
		$$\left(\bigcup_{s=1}^rI_{m_s}\right)\bigcap\left(\bigcup_{j\in\{1,\ldots,n\}\setminus\{m_1,\ldots,m_r\}} I_{j}\right)=\emptyset$$
		and
		$$\left(\bigcup_{s=1}^rJ_{k_s}\right)\bigcap\left(\bigcup_{i\in\{1,\ldots,n\}\setminus\{k_1,\ldots,k_r\}} J_i\right)=\emptyset,$$
		giving that
		$$X(k_1,j_0)=\frac{1}{n_0}\Leftrightarrow j_0\in\{m_1,\ldots,m_r\}$$
		but then 
		$$1=\sum_{q=1}^n X(k_1,q)=\sum_{j_0\in\{m_1,\ldots,m_r\}}\frac{1}{n_0}=\frac{r}{n_0}<1,$$
		which is impossible. Therefore $r=0$ and $n_0|n$.
	\end{proof}
	
	\begin{theorem}\label{equidistant} Let $X(n_0)$ be a doubly stochastic solution to \eqref{YBME} and assume $A$ is provided as in \eqref{diagA}. Then the solution $X(n_0)$ has the entries $\frac{1}{n_0}$ equidistantly distributed throughout each row and column, with exactly $d-1$ zeros between two consecutive nonzero entries within the same row (resp. column).
	\end{theorem}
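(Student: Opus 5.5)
The plan is to show that the supports of the rows of $X(n_0)$ (the sets $J_i$) and of the columns (the sets $I_j$) are exactly the cosets of the subgroup $d\mathbb{Z}_n=\{d,2d,\ldots,n\}$ of $\mathbb{Z}_n$. Here $d=n/n_0$ is the natural number from Theorem \ref{divisors}. Such a coset is precisely an equidistant set of $n_0$ indices with $d-1$ zeros between consecutive elements, taken cyclically. The cases $n_0=1$ (permutation) and $n_0=n$ ($X=U_n$, $d=1$) are immediate, so I assume $1<n_0<n$.

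\textbf{Step 1 (block structure).} I first extract from the proof of Theorem \ref{divisors}, together with Corollaries \ref{samesets} and \ref{disjoint}, that the row supports form a partition of $\{1,\ldots,n\}$ into $d$ classes $\mathcal{C}_1,\ldots,\mathcal{C}_d$, each of size $n_0$. Two rows $i,i'$ have either $J_i=J_{i'}$ or $J_i\cap J_{i'}=\emptyset$. Indeed, by Corollary \ref{samesets}(d) each $J_{i+1}$ equals some $I_{j-1}-1$, and by Corollary \ref{disjoint} two such sets coincide or are disjoint. Counting entries $\frac{1}{n_0}$ (Corollary \ref{cardinality}) then forces exactly $n/n_0=d$ distinct supports. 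The same argument applies to the column supports $I_j$. Write $\mathcal{R}$ for the partition of row indices into classes of rows with equal support, and $\mathcal{F}$ for the family of distinct row supports.

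\textbf{Step 2 (the cyclic shift permutes the classes).} This is the key step. Suppose $i$ and $i'$ lie in the same class of $\mathcal{R}$. Pick $j\in J_i=J_{i'}$. Corollary \ref{samesets}(d), applied at $(i,j)$ and at $(i',j)$, gives
$$J_{i+1}=I_{j-1}-1=J_{i'+1}.$$
Hence $i+1$ and $i'+1$ again lie in a common class, so the shift $\tau:i\mapsto i+_n1$ maps each class of $\mathcal{R}$ into a class. Since $\tau$ is a bijection and all classes have the same size $n_0$, $\tau$ induces a permutation $\bar\tau$ of the $d$ classes.

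I expect the main obstacle to be the bookkeeping in Step 2. The identity in Corollary \ref{samesets}(d) mixes row indices and column indices through the shift by $-1$. I therefore have to check carefully, modulo $n$ with representatives in $\{1,\ldots,n\}$, that the argument really uses only the row-class structure. If needed, I would run the symmetric argument on columns, using $I_{j-1}=J_{i+1}+1$, to confirm that the column classes are the translates $F+1$, $F\in\mathcal{F}$.

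\textbf{Step 3 (classes are cosets).} The group $\langle\tau\rangle\cong\mathbb{Z}_n$ acts transitively on $\{1,\ldots,n\}$, so the induced action $\bar\tau$ on the $d$ classes is transitive. A transitive action of a cyclic group on $d$ points is a single $d$-cycle, and its kernel is generated by $\bar\tau^{d}$. Hence $\tau^d$, that is $i\mapsto i+d$, fixes every class setwise. Every class is then a union of $\langle\tau^d\rangle$-orbits, i.e.\ of cosets of $d\mathbb{Z}_n$, each of size $n/d=n_0$. Since a class has exactly $n_0$ elements, it is a single coset $\{a,a+d,\ldots,a+(n_0-1)d\}$.

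It remains to transfer this to the supports. By Step 2 (or its column version) every row support $J_i$ is of the form $I_{j-1}-1$, a translate of a column support. By the symmetric argument applied to columns, the column supports are cosets as well, and a translate of a coset of $d\mathbb{Z}_n$ is again such a coset. Thus every row and every column of $X(n_0)$ carries its $n_0$ entries $\frac{1}{n_0}$ at positions $a,a+d,\ldots$ modulo $n$, with exactly $d-1$ zeros between consecutive nonzero entries, as claimed.
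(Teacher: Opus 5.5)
Your proof is correct, but after the common first step it finishes differently from the paper. The paper argues entrywise. It first shows that two cyclically adjacent entries $\frac{1}{n_0}$ in a row force the whole row to be nonzero, hence $n_0=n$, and deduces $J_1\cap J_2=\emptyset$. It then iterates \eqref{condtij} to show that for $k=1,2,\ldots$ the columns $X(\cdot,j-k)$, $j\in J_1$, coincide. It stops at the gap $k=d_1=j_2-j_1$, obtains $J_1-d_1=J_1$, and reads off equidistance from this.

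Your Step 2 is the abstract form of that iteration, run on rows instead of columns. Where the paper tracks the gap, you use transitivity. The group $\langle\tau\rangle$ acts transitively on the $d$ classes, so $\bar\tau$ is a $d$-cycle and $\tau^d$ stabilizes every class. Counting then makes each class a single coset of $d\mathbb{Z}_n$.

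What your route buys:
\begin{itemize}
\item It needs neither the non-adjacency lemma nor the step-by-step recursion.
\item It yields the period $d$ directly. By contrast, $d_1$-invariance of $J_1$ alone only makes $J_1$ a union of cosets of the subgroup generated by $d_1$. One must still use that $j_1,j_2$ are consecutive to get a single progression with gap exactly $d$, and the paper leaves that step implicit.
\item You get the sharper fact that the supports are exactly the $d$ cosets, permuted cyclically by the shift. This is the structure behind the Kronecker form in Theorem \ref{1cycleallsol}.
\end{itemize}
The paper's route, in exchange, uses only elementary index manipulations and no group action.

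Two points to tighten.

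\textbf{The unstated identification.} Your transfer step, and the claim in Step 2 that the classes of $\mathcal{R}$ have $n_0$ elements, both rely on a fact you never state. Row supports are pairwise equal or disjoint, so $I_j=\{i: j\in J_i\}$ is exactly the class of rows whose support is the member of $\mathcal{F}$ containing $j$. Hence the classes of $\mathcal{R}$ are precisely the column supports. With that, Step 3 already shows every $I_j$ is a coset, and every $J_{i+1}$, being a translate of some $I_{j-1}$, is one too. Your phrase ``the symmetric argument applied to columns'' would instead show that the classes of columns, i.e.\ the row supports, are cosets. That works equally well, but it needs the same identification, so state it.

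\textbf{The sign you were worried about.} In \eqref{Xi0j0} the surviving terms are those $t\in J_{i_0+1}$ with $t-1\in I_{j_0-1}$. So the correct relation is $J_{i+1}=I_{j-1}+1$, not $I_{j-1}-1$ as printed in Corollary \ref{samesets}(d). For example, take $n=6$ and $X=U_2\otimes D$ with $D$ the $3\times 3$ matrix of the form \eqref{diagA}. Then $X(1,3)=\frac12$ and $J_2=\{1,4\}=I_2+1\neq I_2-1=\{2,5\}$. You only use that $J_{i+1}$ is a translate of $I_{j-1}$ depending on $j$ alone, so your argument is unaffected.
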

	\begin{proof} When $n_0\in\{1,n\}$ the claim is obvious and true (permutation matrices and the uniform distribution matrix $U_n$ possess this property). Assume that $1<n_0<n$, i.e, assume $n_0$ is a proper divisor of $n$.
		
		First notice, due to Corollary \ref{samesets} and Corollary \ref{disjoint}, that the nonzero entries are evenly distributed throughout rows if and only if they are evenly distributed throughout columns.  
		
		Let $J_1=\{j_1,\ldots, j_{n_0}\}$,  that is, $$X(,j_1)=X(,j_2)=\ldots=X(1,j_{n_0})=\frac{1}{n_0}.$$  
		If  $j_1$ and $j_2$ are adjacent modulo $n$, i.e., $j_2=j_1+_n1$, then by Proposition \ref{YAcirc} it follows that
		$$X(1,j_2)=X(2,1)\cdot X(n,j_1)+X(2,2)\cdot X(1,j_1)+\ldots+X(2,n)\cdot X(n-1,j_1).$$
		By assumption, $X(1,j_1)\neq0$, therefore the entry $X(2,2)$ must be nonzero as well, by Corollary \ref{samesets} $(d)$. On the other hand, conducting the same calculations for $X(1,j_1)$ in terms of Proposition \ref{YAcirc} one gets that
		$$\begin{aligned}
			X(1,j_1)&=X(2,1)\cdot X(n,j_1-1)+X(2,2)\cdot X(1,j_1-1)+\ldots+\\
			&+X(2,n)\cdot X(n-1,j_1-1).\end{aligned}$$
		The latter implies that $X(1,j_1-_n1)\neq0$, by the same argument from Corollary \ref{samesets} $(d)$.  Thus the premise that $$X(1,j_1)=X(1,j_1+1)=\frac{1}{n_0}$$ 
		implies that $X(1,j_1-1)=\frac{1}{n_0}$ as well. Continuing this process, one gets that all entries in the first row are equal to $\frac{1}{n_0}$, which, by  Lemma \ref {emptycomplements} implies that $n_0=n$, contradicting the premise that $n_0<n$. Consequently, the positions  $j_1,\ldots,j_{n_0}$ are not adjacent modulo $n$, since $j_1$ and $j_2$ were chosen arbitrarily from $J_1$. Notice that an analogous conclusion holds for the distribution of ${1}{n_0}$ along the same column: if there is a column of the solution $X(n_0)$ in which two nonzero entries are adjacent (mod $n$) then that entire column comprises out of the nonzero entry $\frac{1}{n_0}$, implying that $n=n_0$. 
		
		Therefore the set $J_2=\{j'_1,\ldots,j'_{n_0}\}$ is disjoint with the set $J_1$, where
		$$X(2,j'_1)=\ldots=X(2,j'_{n_0})=\frac{1}{n_0}.$$
		By \eqref{condtij} we have for every $j\in J_1$:
		$$\frac{1}{n_0}=X(1,j)=\sum_{j'\in J_2} X(2,j')\cdot X(j'-1,j-1),$$
		rendering $X(j'-1, j-1)=\frac{1}{n_0}$, for every $j\in J_1$, $j'\in J_2$.  Fix one $j'_0\in J_2$. Then in the row $t_0:=j'_0-1$, we have that the nonzero entries are positioned at $j-1$, for every $j\in J_1$. Clearly $t_0\neq 1$ because the nonzero entries cannot be adjacently positioned in the first row. By Corollary \ref{samesets} it follows that 
		$$I_{j_1-2}=I_{j_2-2}=\ldots=I_{j_{n_0}-2},$$
		equivalently, the columns $X(\cdot, j-2)$ are all entry-wise equal, for every $j\in J_1$. 
		
		If there is an $\ell\in\{1,\ldots,n_0\}$ such that $X(1,j_\ell)=X(1,j_\ell-2)=\frac{1}{n_0}$, then all the entries $j_1,\ldots,i_{n_0}$ must be positioned in the first row consecutively, with the constant distance of two positions (modulo $n$). This is because
		$$X(1,j_\ell+2)=X(1,j_\ell)=X(1,j_\ell-2)=\frac{1}{n_0}\Rightarrow j_\ell-2\in\{j_1,\ldots,j_{n_0}\}$$
		and so on. In that case $2\cdot n_0=n$, i.e., $d=2$ and all nonzero entries are evenly distributed throughout the first row. 
		
		Otherwise, if none of the positions $j_1,\ldots, j_{n_0}$ are positioned in this fashion, then we continue in the recursive manner described below. Precisely, observe the row $t_0+1$. If the nonzero entries (in the said row) are positioned at $s_1,\ldots,s_{n_0}$, it follows that the columns $X(\cdot; j_1-2),\ldots,X(\cdot; j_{n_0}-2)$ have the nonzero entries at positions $(s-1, j-2)$, for every $s\in J_{t_0+1}$ and every $j\in J_1$, because $X(t_0,j-1)=\frac{1}{n_0}$ for every $j\in J_1$ (again this follows from \eqref{condtij}).

		For an arbitrary (but fixed) $s_0\in J_{t_0+1}$, denote by $t_1:=s_0-1$. Then, in the row $t_1$, the nonzero entries are located at  positions $j-2$, for every $j\in J_1$. By applying \eqref{condtij} once again we conclude that the columns $X(\cdot; j-3)$ are all entry-wise equal, for every $j\in J_1$.
		
		Denote by $d_1\in\NN$, $d_1>1$, the difference $j_2-j_1$ modulo $n$, that is, $j_1+_nd_1=j_2$. Continuing the process described above, we eventually obtain the step $d_1$, such that the columns $X(\cdot; j-d_1)$ are all entry-wise equal, for every $j\in J_1$. Specially, for $j=j_2$, one gets $j_1=j_2-d_1$, and by Corollary \ref{samesets} and Corollary \ref{disjoint} it follows that
		$$j_2=j_1+d_1,\quad j_3=j_2+d_1,\quad,\ldots,\quad j_{n_0}+d_1=j_1,$$
		proving that the nonzero entries are evenly distributed through the first row. Since the first row was chosen arbitrarily, the same conclusion holds for every row of the matrix $X(n_0)$, and by the arguments mentioned above, the same is true for each column of the matrix $X(n_0)$. 
	\end{proof}

	\subsection{All d. s. solutions for $1-$cycle}
	In this subsection we characterize all d. s. solutions to \eqref{YBME}, where $A$ is assumed to be a one-cycle permutation provided via \eqref{diagA}. 
	
	For a fixed $d\in\NN$, we say that a permutation matrix $D\in\RR^{d\times d}$ belongs to the set $\Omega_d$, if and only if 
	\begin{eqnarray}\label{Dperm}\begin{aligned}
			&D\in\Omega_d\Leftrightarrow\textrm{$D$ is a permutation matrix such that}\\
			&(\forall i,j\in\{1,\ldots,d\})D(i,j)=1\Leftrightarrow\\
			&\left(\exists s_{(i,j)}\in\{1,\ldots,d\}\right)\left(D(i+_d1,s_{(i,j)})=1 \wedge D(s_{(i,j)}-_d1,j-_d1)=1\right).\end{aligned}
	\end{eqnarray}

	\begin{theorem}[All d. s. solutions for $1-$cycle]\label{1cycleallsol} Let $A$ be a cyclic matrix the same dimension $n$, given in the form \eqref{diagA}. If $X$ is a doubly stochastic matrix of order $n$, the following statements are equivalent:
		\begin{itemize}
			\item[(a)] $X$ is a d. s. solution to \eqref{YBME}.
			\item[(b)] There exist numbers $d,n_0\in\{1,\ldots,n\}$ with $n=d\cdot n_0$, such that the matrix $X$ has the form
			\begin{equation}
				\label{Kronecker}
				X=U_{n_0}\otimes D,
			\end{equation}
			where $\otimes$ stands for the Kronecker product, $U_{n_0}$ is the full $n_0\times n_0$ uniform distribution matrix, and $D\in\Omega_d$ is arbitrary. Consequently, $X\in\U_n(n_0)$.
		\end{itemize} 
	\end{theorem}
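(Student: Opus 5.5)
The plan is to exploit the $d$-periodicity suggested by Theorem \ref{equidistant}, and to reduce the equation \eqref{condtij} for $X$ to the same equation for a $d\times d$ matrix $D$ and the $d$-cycle $A_d$. First fix the indexing. For $n=d\,n_0$, write every index $i\in\{1,\ldots,n\}$ as $i=(a-1)d+p$ with $a\in\{1,\ldots,n_0\}$ and $p\in\{1,\ldots,d\}$. Then the Kronecker product $X=U_{n_0}\otimes D$ has entries
$$X\bigl((a-1)d+p,(b-1)d+q\bigr)=\frac{1}{n_0}D(p,q).$$
Thus $X(i,j)$ depends only on the residues of $i$ and $j$ modulo $d$. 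Since $d\mid n$, the shifts $i\mapsto i\pm_n 1$ are compatible with reduction modulo $d$: the residue of $i+_n1$ is the residue of $i$ shifted by $+_d1$. This observation drives both directions.

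\textbf{(b)$\Rightarrow$(a).} Insert $X=U_{n_0}\otimes D$ into \eqref{condtij}. The left side is $X(i-1,j+1)=\frac{1}{n_0}D(p-_d1,q+_d1)$. On the right side, the sum $\sum_{s=1}^n X(i,s+1)X(s,j)$ splits according to the residue $r$ of $s$ modulo $d$. Each residue occurs exactly $n_0$ times, so
$$\sum_{s=1}^n X(i,s+1)X(s,j)=n_0\cdot\frac{1}{n_0^2}\sum_{r=1}^d D(p,r+_d1)D(r,q)=\frac{1}{n_0}\,(DA_dD)(p,q).$$
Hence $X$ solves \eqref{YBME} if and only if $A_dDA_d=DA_dD$, where $A_d$ is the $d$-cycle in the form \eqref{diagA}. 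For a permutation matrix $D$, this entrywise identity (reindexed by $i\mapsto i+1$, $j\mapsto j-1$) is exactly the condition \eqref{Dperm} defining $\Omega_d$. A product of two $0/1$ permutation entries is nonzero for exactly one $s$, which is the existential $s_{(i,j)}$ in \eqref{Dperm}. So $D\in\Omega_d$ gives (a), and clearly $X\in\U_n(n_0)$.

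\textbf{(a)$\Rightarrow$(b).}
\begin{itemize}
\item[1.] By Theorem \ref{udmus}, $X\in\U_n(n_0)$ with $R_X=\frac1{n_0}$. By Theorem \ref{divisors}, $n_0\mid n$; put $d=n/n_0$.
\item[2.] By Theorem \ref{equidistant}, the support of every row $i$ is a full coset $q_i+d\mathbb{Z}_n$, and the support of every column $j$ is a coset $p_j+d\mathbb{Z}_n$.
\item[3.] Deduce $d$-periodicity in each index. If $X(i,j)=\frac1{n_0}$, then $i$ lies in the support coset of column $j$, so $X(i\pm_n d,j)=\frac1{n_0}$. If $X(i,j)=0$ but $X(i+d,j)\neq0$, then shifting back by $-d$ gives a contradiction. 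The same argument applies to rows.
\item[4.] Define $D(p,q):=n_0X(p,q)$ for $p,q\le d$. Periodicity gives $X=U_{n_0}\otimes D$. The row support of $X$ is a single coset, so each row of $D$ contains exactly one $1$; the same holds for columns. Hence $D$ is a permutation matrix.
\item[5.] The computation from the first direction, read backwards, shows that $D$ satisfies $A_dDA_d=DA_dD$, that is, $D\in\Omega_d$.
\end{itemize}

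The main obstacle is bookkeeping rather than ideas. The modular indexing uses representatives in $\{1,\ldots,n\}$, and this has to be matched carefully with the block indexing of the Kronecker product, with the wrap-around of $A$ across block boundaries, and with the exact shifts appearing in \eqref{Dperm}. I would first verify the reduction $X$-equation $\Leftrightarrow$ $D$-equation once, cleanly, as a lemma, and then use it in both directions. Step 3 is the only point that uses the full strength of Theorem \ref{equidistant}, namely that both the row supports and the column supports are cosets. I would state this explicitly before invoking it.
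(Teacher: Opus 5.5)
Your proof is correct. For (a)$\Rightarrow$(b) it follows the paper's skeleton: Theorems \ref{udmus}, \ref{divisors} and \ref{equidistant}, then $D:=n_0$ times the upper-left $d\times d$ block. It therefore depends on Theorem \ref{equidistant} exactly as the paper does. The real difference is in how the equation is transferred between $X$ and $D$.

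The paper works entry by entry with the support sets $I_j$, $J_i$ and Corollary \ref{samesets}. In (a)$\Rightarrow$(b) it checks only that each nonzero entry of $D$ has the witness $s_{(i,j)}$ required by \eqref{Dperm}, treating the wrap-around $s_1=1$, $s_0=n\equiv d$ by hand. It then declares that $D\in\Omega_d$ and the Kronecker form hold ``by construction''. In (b)$\Rightarrow$(a) it verifies \eqref{condtij} at the nonzero entries through the explicit cosets $I_{j-1}$, $J_{i+1}$, and leaves the zero entries to direct verification. The permutation case $n_0=1$ is handled separately.

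Your residue-sum identity $\sum_{s=1}^n X(i,s+1)X(s,j)=\frac1{n_0}(DA_dD)(\bar i,\bar j)$, with $\bar i$ the residue of $i$ modulo $d$, holds for any $d\times d$ matrix $D$ once $X=U_{n_0}\otimes D$. It does all of this in one stroke. It covers zero and nonzero entries and both halves of the biconditional in \eqref{Dperm}. It also covers the extreme cases $n_0\in\{1,n\}$ without a case split. Your Step 3 supplies the two-sided $d$-periodicity that the paper leaves implicit.

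Your route also gives a cleaner formulation. Since $\Omega_d$ is exactly the set of permutation solutions for the $d$-cycle $A_d$, the theorem says the d.\,s.\ solutions are precisely the matrices $U_{n/d}\otimes D$ with $d\mid n$ and $D\in\mathcal{S}_P(A_d)$. This links the section directly to Section \ref{Perm}. The paper's route has the advantage of reusing the support-set corollaries already in place, but its modular bookkeeping is noticeably harder to check.
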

	\begin{proof} 
		Assume that $X$ is a permutation matrix. Denote by 
		$$\operatorname{Supp}X=\{(i,j): X(i,j)=1\}.$$
		Then  \eqref{Dperm} holds for $X$ if and only if for every $i,j$: \begin{equation}\label{support}(i,j)\in\operatorname{Supp}_X\Leftrightarrow \left(\exists s_{(i,j)}\right)\ (i+1,s_{(i,j)}),(s_{(i,j)}-1,j-1)\in\operatorname{Supp}_X.\end{equation}
		On the other hand, for an arbitrary pair $(i,j)\in \operatorname{Supp}_X$, one has  that
		\begin{eqnarray}\label{Prodij}
			\begin{aligned}&\left(\exists s_0\in\{1,\ldots,n\}\right)\quad (i+1,s_0),(s_0-1,j-1)\in\operatorname{Supp}_X\Leftrightarrow \\
				&\left(\exists s_0\in\{1,\ldots,n\}\right)\quad X(i+1,s_0)\cdot X(s_0-1,j-1)=1\Leftrightarrow\\
				&X(i,j)=\sum_{s=1}^n X(i+1,s)\cdot X(s-1,j-1)\Leftrightarrow \quad\left(\textrm{by \eqref{condtij}}\right)\\
				&AXA(i,j)=XAX(i,j).
		\end{aligned}\end{eqnarray}
		The latter equivalence holds true for any $(i,j)\in\operatorname{Supp}X$, therefore \eqref{Dperm} holds for $X$, if and only if \eqref{support} holds for every $i,j$, if and only if \eqref{Prodij} holds for any $i,j$.
		In this case, the decomposition \eqref{Kronecker} is true by taking $D:=X$, $d=n$, $n_0=1$, and $U_{n_0}=[1]_{1\times 1}$ is the $1\times 1$ matrix.
		
		$(a)\Rightarrow(b)$: Let $X$ be a proper d. s. solution to \eqref{YBME}. Then by Theorem \ref{udmus} there exists an $n_0$ such that $X\in \U_n(n_0)$. By Theorem \ref{divisors} we know that $n_0|n$ and we denote by $d$ the number $d=\frac{n}{n_0}$.  If $n=n_0$, then $U_{n_0}=U_n=X$, $D=[1]_{1\times 1}$  and the claim is true. For $1<n_0<n$ we proceed as follows:
		
		Let $I_j$ and $J_i$ be defined as in \eqref{Ij} and \eqref{Ji}, respectively, for any $i,j\in\{1,\ldots,n\}$. By Theorem \ref{equidistant} the entries $\frac{1}{n_0}$ are evenly distributed throughout each row and column of the solution $X$, with precisely $d-1$ zeros between two consecutive entries $\frac{1}{n_0}$ along the same row, resp. column, modulo $n$. Denote by $Y_d$  the upper-left block of the matrix $X$ of dimension $d$:
		$$Y_d(i,j):=X(i,j),\quad i,j\in\{1,\ldots,d\}.$$
		Since the nonzero entries $\frac{1}{n_0}$ are evenly distributed through every row and every column of the matrix $X$, with $d-1$ zeros between any two consecutive nonzero entries (modulo $n$), it follows that in each row and in each column of the matrix $Y$ there is exactly one entry with the value $\frac{1}{n_0}$ while the remaining entries in $Y$ are zeros. Moreover, assume that for some $(i_0,j_0)\in\{1,\ldots,d\}^2$ 
		$$Y(i_0,j_0)=X(i_0,j_0)=\frac{1}{n_0}.$$
		Then by Corollary \ref{samesets} 
		$$I_{(j_0-_n1)}-_n1=J_{i_0+_n1}.$$
		However, the set $I_{(j_0-_n1)}-_n1$ has exactly one element $s_0$ from the set $\{1,\ldots, d\}$ due to the equidistant distribution of $\frac{1}{n_0}$ through the column $j_0-_n1$. Similarly, by Theorem \ref{equidistant}, there is exactly one entry $s_1$ from the set $J_{i_0+_n1}$ which belongs to the set $\{1,\ldots,d\}$. By \eqref{condtij}, it follows that 
		$$Y(i_0,j_0)=X(i_0+_n1,s_1)\cdot X(s_0,j_0-_n1)+\ldots$$
		implying that $s_0=s_1-_n1$. Notice that, if $s_1=1$, then $s_0=n$, which is congruent with $d$ modulo $d$, while if $s_1\in\{2,\ldots, d\}$ then $s_0$ remains to be in $\{1,\ldots,d-1\}$. By choosing $D:=n_0 Y$ it is clear that $D\in\Omega_d$ and \eqref{Kronecker} holds by construction.

		$(b)\Rightarrow (a):$ Conversely, assume that $X$ is given as in \eqref{Kronecker}, for some $d,n_0$ $dn_0=n$, and $D\in\Omega_d$ arbitrary (but fixed). Since the permutation solutions have been characterized in the previous part of the proof, we assume that $n_0,d<n$. 
		
		By construction, the entire matrix $X$ comprises out of $n_0^2$ identical copies of the upper left block of dimension $d\times d$, therefore it has exactly one nonzero entry (with the value $\frac{1}{n_0}$) in every row and in every column in the said upper left $d\times d$ block.

		Let $(i-1,j+1)$ be an arbitrary nonzero entry, $X(i-1,j+1)=\frac{1}{n_0}$, where $1\leq i,j\leq d$. Then from \eqref{Dperm}, there exists a unique $\ell_{i,j}\in\{1,\ldots,d\}$ such that $$X(i+_d1,\ell_{i,j})=\frac{1}{n_0}=X(\ell_{i,j}-_d1,j-_d1),$$
		implying
		$$I_{j-_d1}=\{\ell_{i,j}-_d1,\ell_{i,j}-_d1+_n d, \ell_{i,j}-_d1+_n 2d,\ldots, \ell_{i,j}-_d1+_n (n_0-1)d\}$$
		and
		$$J_{i+_d1}=\{\ell_{i,j},\quad \ell_{i,j}+d,\quad\ldots,\ell_{i,j}+(n_0-1)+_nd\}.$$
		Then by Proposition \ref{YAcirc} it follows that
		\begin{eqnarray}
			\label{entrywisequal}
			\begin{aligned}
				AXA(i,j)&=X(i-_d1,j+_d 1)=\\
				&=\sum_{s=0}^{n_0-1}X(i+_d1, \ell_{i,j}+sd)\cdot X(\ell_{i,j}-_d1+sd,j-_d1)=\\
				&=\sum_{s=0}^n X(i+_d1,s)\cdot X(s-_n1,j-_d1)=XAX(i,j).
		\end{aligned}\end{eqnarray}
		Applying this procedure for any $d\times d$ sub-block of the matrix $X$, we verify that all nonzero entries of the matrix $X$ (all $n_0^2$ of them) satisfy \eqref{entrywisequal}. Since the remaining entries of $X$ are zeros, direct verification shows that for every $1\leq i,j\leq n$ the condition \eqref{condtij} holds, thus proving that $X$ is indeed a doubly stochastic solution to \eqref{YBME}.
	\end{proof}
	Within the obtained solution set, we characterize the commuting solutions in terms of the following:
	\begin{lemma}\label{lem:commutant_cyclic_is_circulant}
		Let $A$ be the cyclic permutation matrix of dimension $n$ given by \eqref{diagA}.
		If $X\in\mathbb{C}^{n\times n}$ satisfies $XA=AX$, then $X$ is a circulant matrix.
	\end{lemma}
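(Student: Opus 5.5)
Writing $X\in\mathbb{C}^{n\times n}$ as $X=[x_{i,j}]$, the plan is to translate $XA=AX$ into an entrywise identity using Proposition \ref{YAcirc} and then read off that entries are constant along (cyclically wrapped) diagonals. By \eqref{AY} we have $AX(i,j)=X(i-1,j)$, and by \eqref{YA} we have $XA(i,j)=X(i,j+1)$, with all indices taken modulo $n$ in $\{1,\ldots,n\}$. Proposition \ref{YAcirc} was stated for arbitrary square $Y$, so it applies to complex matrices without change. The hypothesis $AX=XA$ is therefore equivalent to
\begin{equation*}
X(i-1,j)=X(i,j+1)\quad\text{for all } i,j\in\{1,\ldots,n\}.
\end{equation*}
Replacing $i$ by $i+1$ gives the cleaner form $X(i,j)=X(i+1,j+1)$ for all $i,j$, again modulo $n$.

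Next, we iterate this relation. For every $k\in\mathbb{Z}$ it yields $X(i,j)=X(i+k,j+k)$, so each entry depends only on the difference $j-i \pmod n$. Define $c_m:=X(1,1+m)$ for $m=0,1,\ldots,n-1$. Taking $k=1-i$ gives $X(i,j)=X(1,\,j-i+1)=c_{j-i \bmod n}$. Thus every row of $X$ is the previous row cyclically shifted by one position, which is exactly the definition of a circulant matrix generated by $(c_0,\ldots,c_{n-1})$.

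The only delicate point is bookkeeping: the paper uses a representative set $\{1,\ldots,n\}$ rather than $\{0,\ldots,n-1\}$, and the shift direction depends on the convention in \eqref{diagA}. I would check on one entry, say $X(n,1)=X(1,2)$ from $i=n$, $j=1$, that the wraparound is consistent. I would also note that the circulant obtained may be expressed as $\sum_m c_m A^{-m}$ or $\sum_m c_m A^{m}$ depending on orientation, which is harmless. There is no real obstacle beyond this index check.
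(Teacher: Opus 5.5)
Your argument is correct, and it takes a different route from the paper's.

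The paper's proof is spectral. It diagonalizes $A$ with the unitary Fourier matrix $F$ and notes that $F^*AF$ has pairwise distinct diagonal entries. From this it concludes that $F^*XF$ must be diagonal. It then invokes a cited result that matrices $FBF^*$ with $B$ diagonal are exactly the circulants.

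You instead read $AX=XA$ entrywise via \eqref{AY} and \eqref{YA}. This gives $X(i-1,j)=X(i,j+1)$, i.e.\ $X(i,j)=X(i+1,j+1)$ modulo $n$, which is the definition of a circulant. Your index bookkeeping is right, since $A(i,j)=1$ exactly when $j\equiv i-1 \pmod n$.

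What your route buys:
\begin{itemize}
\item It is self-contained, using only Proposition~\ref{YAcirc}, which is already in the paper.
\item It works over any field.
\item Every step is reversible, so you get the converse (circulants commute with $A$) for free.
\item It lands directly on the identity $X(i+1,j+1)=X(i,j)$, which is what the proof of Theorem~\ref{commuting_solutions_1cycle} actually uses in both directions. The paper converts back to that identity right after invoking the lemma.
\end{itemize}

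What the spectral route buys is structural information. It shows that the commutant of $A$ is the $n$-dimensional algebra of matrices simultaneously diagonalized by $F$ (equivalently, polynomials in $A$). It also generalizes to any matrix with simple spectrum. The cost is relying on external results and working over $\mathbb{C}$.
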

	
	\begin{proof}
		Let $\omega:=e^{2\pi i/n}$ and let $F\in\mathbb{C}^{n\times n}$ be the unitary Fourier matrix
		\[
		F=\frac1{\sqrt n}\big(\omega^{(j-1)(k-1)}\big)_{j,k=1}^n .
		\]
		By [10, Chapter~3, Theorem~3.1], the matrix $A$ is diagonalized by $F$, namely
		\[
		F^*AF=\Lambda:=\mathrm{diag}(1,\omega,\omega^2,\dots,\omega^{n-1}),
		\]
		whose diagonal entries are pairwise distinct.
		
		Assume $XA=AX$ and set $B:=F^*XF$. Multiplying $XA=AX$ on the left by $F^*$ and on the right by $F$
		gives
		\[
		B\Lambda=\Lambda B.
		\]
		Since $\Lambda$ is diagonal, taking the $(i,j)$--entries yields
		\[
		B_{ij}\Lambda_{jj}=\Lambda_{ii}B_{ij},
		\quad\text{hence}\quad
		B_{ij}(\Lambda_{jj}-\Lambda_{ii})=0.
		\]
		If $i\neq j$, then $\Lambda_{jj}\neq\Lambda_{ii}$, so $B_{ij}=0$. Therefore $B$ is diagonal and
		\[
		X=FBF^*.
		\]
		By \cite[Chapter $3$, Theorem $3.1$]{RMG}, matrices of the form $FBF^*$ with $B$ diagonal are precisely the
		circulant matrices. Hence $X$ is circulant.
	\end{proof}
	
	\begin{theorem}[Commuting d. s. solutions for the $1-$cycle]\label{commuting_solutions_1cycle}
		Assume that $A$ is the cyclic permutation matrix of order $n$ given by \eqref{diagA}.
		Let $X$ be a doubly stochastic solution to \eqref{YBME}, given by \eqref{Kronecker} as 
		$$X=U_{n_0}\otimes D,$$
		for some $D\in\Omega_d$, $n_0\cdot d=n$. 
		
		Then $XA=AX$ if and only if
		the matrix $D$ is circulant.
	\end{theorem}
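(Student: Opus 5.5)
The plan is to translate both sides of the equivalence into an entrywise shift-invariance condition and then compare them using the block structure of the Kronecker product. By Proposition \ref{YAcirc}, $AX(i,j)=X(i-1,j)$ and $XA(i,j)=X(i,j+1)$, with indices taken modulo $n$. Hence $AX=XA$ holds if and only if
\[
X(i-1,j)=X(i,j+1)\quad\text{for all } i,j,
\]
which, after replacing $i$ by $i+1$, is the same as
\[
X(i+1,j+1)=X(i,j)\quad\text{for all } i,j\in\{1,\ldots,n\},
\]
indices modulo $n$. This is exactly the statement that $X$ is circulant, consistent with Lemma \ref{lem:commutant_cyclic_is_circulant}. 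I will work with this elementary characterization, which also gives the converse direction directly without needing the Fourier argument. Similarly, I take ``$D$ is circulant'' to mean $D(a+_d1,b+_d1)=D(a,b)$ for all $a,b\in\{1,\ldots,d\}$.

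Next I make the Kronecker structure explicit. Write each index $i\in\{1,\ldots,n\}$ uniquely as $i=(p-1)d+a$ with $p\in\{1,\ldots,n_0\}$ and $a\in\{1,\ldots,d\}$; thus $a$ is the residue of $i$ modulo $d$, with representatives taken in $\{1,\ldots,d\}$. Denote this residue by $\bar i$. Since every entry of $U_{n_0}$ equals $\frac{1}{n_0}$, the definition of the Kronecker product gives
\[
X(i,j)=\frac{1}{n_0}\,D(\bar i,\bar j).
\]
So $X$ depends only on the residues of its indices modulo $d$.

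The key compatibility observation is that $\overline{i+_n1}=\bar i+_d1$. This holds because $d\mid n$: the wrap-around from $n$ to $1$ sends residue $\bar n=d$ to residue $1$, which is exactly $d+_d1$. Hence
\[
X(i+_n1,j+_n1)=\frac{1}{n_0}\,D(\bar i+_d1,\bar j+_d1).
\]
Consequently the circulant condition on $X$ becomes
\[
D(\bar i+_d1,\bar j+_d1)=D(\bar i,\bar j)\quad\text{for all } i,j.
\]
As $(i,j)$ ranges over $\{1,\ldots,n\}^2$, the pair $(\bar i,\bar j)$ ranges over all of $\{1,\ldots,d\}^2$. Therefore this is equivalent to $D$ being circulant, which proves both directions at once.

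The only step needing care is the residue bookkeeping at the wrap-around, namely checking that the shift modulo $n$ descends to the shift modulo $d$. This is precisely where $n=d\cdot n_0$ (Theorem \ref{divisors}) is used. I will also remark that the trivial cases are covered: for $d=1$, $X=U_n$ and $D=[1]$ are both trivially circulant; for $n_0=1$, $X=D$ and the statement reduces to the characterization of $X$ itself.
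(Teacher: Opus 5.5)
Your proof is correct. The second half is the same block bookkeeping the paper uses: you write $i=(p-1)d+a$, read off $X(i,j)=\frac{1}{n_0}D(\bar i,\bar j)$, and compare $X(i+1,j+1)$ with $X(i,j)$.

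Where you differ is the first step, the equivalence between $AX=XA$ and circulance of $X$. The paper obtains the forward implication from Lemma \ref{lem:commutant_cyclic_is_circulant}, which diagonalizes $A$ by the Fourier matrix and uses that $1,\omega,\ldots,\omega^{n-1}$ are distinct. It obtains the reverse implication from the fact that circulant matrices commute with one another, hence with $A$. You instead read off $AX(i,j)=X(i-1,j)$ and $XA(i,j)=X(i,j+1)$ from Proposition \ref{YAcirc}. This gives both directions at once with no spectral input. The Fourier argument carries extra structure (a simultaneous diagonalization of everything commuting with $A$), but none of it is needed for this statement.

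You also make explicit the wrap-around check $\overline{i+_n1}=\bar i+_d1$. This is exactly where $d\mid n$ enters, and the paper leaves it implicit in the phrase ``with indices taken modulo $d$''.

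Finally, neither your argument nor the paper's uses $D\in\Omega_d$ or the fact that $X$ solves \eqref{YBME}. The equivalence therefore holds for every $d\times d$ matrix $D$.
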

	
	\begin{proof}
		($\Rightarrow$)
		Assume $XA=AX$. By Lemma~\ref{lem:commutant_cyclic_is_circulant}, the matrix $X$ is circulant, hence
		\[
		X(i+1,j+1)=X(i,j)
		\qquad (\mathrm{mod}\ n).
		\]
		By Theorem \ref{1cycleallsol}, $X=U_{n_0}\otimes D$ with $n=n_0\cdot d$ and $D\in\Omega_d$.
		Write $i=\alpha d+r$ and $j=\beta d+s$ with $\alpha,\beta\in\{0,\dots,n_0-1\}$ and $r,s\in\{1,\dots,d\}$.
		Then
		\[
		X(i,j)=\frac1{n_0}D(r,s).
		\]
		Comparing $X(i+1,j+1)$ with $X(i,j)$ gives
		\[
		D(r+1,s+1)=D(r,s)
		\qquad (\mathrm{mod}\ d),
		\]
		with indices taken modulo $d$. Thus $D$ is circulant.
		
		($\Leftarrow$)
		Conversely, if $D$ is circulant, then $D(r+1,s+1)=D(r,s)$ (mod $d$), and the same block computation
		implies $X(i+1,j+1)=X(i,j)$ (mod $n$). Hence $X$ is circulant. Since $A$ is circulant as well, $XA=AX$.
	\end{proof}
	\begin{corollary}\label{trivA}
		A permutation solution $X$ is commuting if and only if $X=A$.
	\end{corollary}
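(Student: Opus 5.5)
The plan is to work entirely in the group $\P_n$ and use only the equation together with commutativity. The direction ``$X=A$ implies commuting'' is immediate, since $A$ commutes with itself and $X=A$ solves \eqref{YBME}. For the converse, suppose $X$ is a permutation solution with $AX=XA$. Substituting $AX=XA$ into $AXA=XAX$ gives
\[
A^2X=AXA=XAX=AX^2 .
\]
Both $A$ and $X$ are permutation matrices, hence invertible. Cancelling $A$ on the left and then $X$ on the right yields $A=X$. This is the same cancellation already used in the proof of Theorem \ref{constr_noncom_0}, so I would simply repeat it here.

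Alternatively, to stay inside the framework of Theorem \ref{commuting_solutions_1cycle}, I would argue as follows. A permutation solution has $n_0=1$ and $d=n$, so $X=D\in\Omega_n$, and commuting forces $X$ to be circulant. A circulant permutation matrix is a power $A^k$. Then \eqref{YBME} reads $A^{k+2}=A^{2k+1}$, so $A^{k-1}=I$. Since $A$ has order $n$, this gives $k\equiv 1 \pmod n$, that is, $X=A$.

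Neither step is a real obstacle. The only point needing care is that the cancellation relies on invertibility, which is exactly where the permutation hypothesis enters. For a proper DS solution such as $U_n$ the statement fails, because $U_n$ commutes with $A$ but $U_n\neq A$. I would remark on this contrast briefly.
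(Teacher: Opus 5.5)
Your proposal is correct and matches the paper's own argument: substituting $AX=XA$ into the equation gives $A^2X=AX^2$, and cancelling the invertible permutation matrices $A$ (on the left) and $X$ (on the right) yields $X=A$. Your alternative route through Theorem \ref{commuting_solutions_1cycle}, where a circulant permutation solution $X=A^k$ forces $A^{k-1}=I$, is also valid and mirrors the paper's remark that the corollary follows from that theorem; note that this route applies only when $A$ is a single $n$-cycle, whereas the cancellation argument works for any permutation matrix $A$.
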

	This observation follows immediately from $AX^2=A^2X\Leftrightarrow A=X$, if both $A$ and $X$ are invertible. Corollary \ref{trivA} is just a consequence of Theorem \ref{commuting_solutions_1cycle} which confirms this well-known fact.
	
	\begin{example}
		Let $n=8$ and suppose $A$ is in the form \eqref{diagA}:
		$$A=\left[\begin{array}{cccccccc}
			0 & 0 & 0& 0 & 0 & 0 & 0 & 1\\
			1 & 0 & 0 & 0 & 0 & 0 & 0 & 0\\
			0 & 1 & 0 & 0 & 0 & 0 & 0 & 0\\
			0 & 0 & 1 & 0 & 0 & 0 & 0 & 0\\
			0 & 0 & 0 & 1 & 0 & 0 & 0 & 0\\
			0 & 0 & 0 & 0 & 1 & 0 & 0 & 0\\
			0 & 0 & 0 & 0 & 0 & 1 & 0 & 0\\
			0 & 0 & 0 & 0 & 0 & 0 & 1 & 0\\
		\end{array}\right].$$
		Then $n_0\in\{1,2,4,8\}$ and $d=\frac{n}{n_0}$. Denote by $\pi_A=(1 \ 2 \ 3\ 4\ 5\ 6\ 7\ 8)$.\\
		
		\noindent\textbf{Case $1$: $n_0=1$.} When $n_0=1$, then $d=8$ and the corresponding d. s. solutions $X_8$ are permutation solutions, provided as $X_8=U_1\otimes D_8\equiv D_8$, where $D_8\in\Omega_8$ is a (permutation) matrix satisfying \eqref{Dperm}. All such solutions were obtained in Section \ref{Perm}, Example \ref{Ex_8}, and a direct verification shows that they all posses the property \eqref{Dperm}.\\

		\noindent\textbf{Case $2$: $n_0=2$.} When $n_0=2$ then $d=4$, and we have four possibilities for the circulant permutations $D_4$:
		$$D_{4,1}=\left[\begin{array}{cccc}
			1 & 0 & 0 & 0\\
			0 & 1 & 0 & 0\\
			0 & 0 & 1 & 0\\
			0 & 0 & 0 & 1
		\end{array}\right],\;D_{4,2}=\left[\begin{array}{cccc}
			0 & 0 & 1 & 0\\
			0 & 0 & 0 & 1\\
			1 & 0 & 0 & 0\\
			0 & 1 & 0 & 0
		\end{array}\right],$$
		$$D_{4,3}=\left[\begin{array}{cccc}
			0 & 1 & 0 & 0\\
			0 & 0 & 1 & 0\\
			0 & 0 & 0 & 1\\
			1 & 0 & 0 & 0
		\end{array}\right],\;
		D_{4,4}=\left[\begin{array}{cccc}
			0 & 0 & 0 & 1\\
			1 & 0 & 0 & 0\\
			0 & 1 & 0 & 0\\
			0 & 0 & 1 & 0
		\end{array}\right].$$
		Notice that $D_{4,1},D_{4,2}$, and $D_{4,3}$ do not belong to $\Omega_4$, while $D_{4,4}\in\Omega_4$. There are four additional possibilities for the non-circulant matrices $D_4$ which satisfy \eqref{Dperm}:
		$$D_{4,5}=\left[\begin{array}{cccc}
			0 & 0 & 0 & 1\\
			0 & 0 & 1 & 0\\
			1 & 0 & 0 & 0\\
			0 & 1 & 0 & 0
		\end{array}\right],\;D_{4,6}=\left[\begin{array}{cccc}
			0 & 0 & 1 & 0\\
			0 & 0 & 0 & 1\\
			0 & 1 & 0 & 0\\
			1 & 0 & 0 & 0
		\end{array}\right],$$
		$$D_{4,7}=\left[\begin{array}{cccc}
			0 & 1 & 0 & 0\\
			0 & 0 & 0 & 1\\
			1 & 0 & 0 & 0\\
			0 & 0 & 1 & 0
		\end{array}\right],\;D_{4,8}=\left[\begin{array}{cccc}
			0 & 0 & 1 & 0\\
			1 & 0 & 0 & 0\\
			0 & 0 & 0 & 1\\
			0 & 1 & 0 & 0
		\end{array}\right].$$
		Now we have
		$$U_2=\frac{1}{2}\left[\begin{array}{cc}
			1 & 1\\
			1 & 1
		\end{array}\right]$$
		and
		$$X_{4,k}=U_2\otimes D_{4,k},\quad k\in\{4,\ldots,8\}.$$
		There is only one commuting solution, $X_{4,4}$, while $X_{4,5},$ $\ldots,$ $X_{4,8}$ are non-commuting solutions. \\
		
		\noindent\textbf{Case $3$: $n_0=4$.} When $n_0=4$ then $d=2$, and we have circulant matrices $D_2$:
		$$D_{21}=\mat{1}{0}{0}{1},\quad D_{22}=\mat{0}{1}{1}{0}.$$
		Direct verification shows that $D_{21}\notin\Omega_2$ and $D_{22}\in\Omega_2$.
		Respectively, 
		$$U_4=\frac{1}{4}\left[\begin{array}{cccc}
			1 & 1 & 1 & 1\\
			1 & 1 & 1 & 1\\
			1 & 1 & 1 & 1\\
			1 & 1 & 1 & 1
		\end{array}\right]$$
		and we have one more commuting solution:
		$$X_{2,2}=U_4\otimes D_{2,2}.$$
		\noindent\textbf{Case $4$: $n_0=8$.} Finally when $n_0=8$ then $d=1$ and $D_1=[1]_{1\times 1}$, therefore $X_1=\frac{1}{8}\mathbf{1}_{8}=U_8$ is a proper d. s. solution to \eqref{YBME}, again a commuting one.\\
		
		In summary, there are in total $7$ proper doubly stochastic solutions, out of which $3$ are commuting solutions:
		$$U_8,\quad U_4\otimes\mat{0}{1}{1}{0},\quad U_2\otimes\left[\begin{array}{cccc}
			0 & 0 & 0 & 1\\
			1 & 0 & 0 & 0\\
			0 & 1 & 0 & 0\\
			0 & 0 & 1 & 0
		\end{array}\right],$$
		and four of which  are non-commuting solutions:
		$$U_2\otimes\left[\begin{array}{cccc}
			0 & 0 & 0 & 1\\
			0 & 0 & 1 & 0\\
			1 & 0 & 0 & 0\\
			0 & 1 & 0 & 0
		\end{array}\right],\;U_2\otimes\left[\begin{array}{cccc}
			0 & 0 & 1 & 0\\
			0 & 0 & 0 & 1\\
			0 & 1 & 0 & 0\\
			1 & 0 & 0 & 0
		\end{array}\right],$$
		$$U_2\otimes\left[\begin{array}{cccc}
			0 & 1 & 0 & 0\\
			0 & 0 & 0 & 1\\
			1 & 0 & 0 & 0\\
			0 & 0 & 1 & 0
		\end{array}\right],U_2\otimes\left[\begin{array}{cccc}
			0 & 0 & 1 & 0\\
			1 & 0 & 0 & 0\\
			0 & 0 & 0 & 1\\
			0 & 1 & 0 & 0
		\end{array}\right].$$
		Moreover, there are $25$ permutation solutions, with all being non-commuting except for the trivial one $\pi_X=\pi_A$. 
		\ \hfill$\clubsuit$
	\end{example}

	\section{Proper d. s. solutions-general case}\label{general}
	
	Finally, in this section we assume that $A$ is an arbitrary permutation matrix and we proceed to find all proper doubly stochastic solutions to \eqref{YBME}. 
	
	\subsection{Necessary conditions}
	
	Assume that $X$ is a proper d. s. solution to \eqref{YBME}. The fact that $X$ must be a singular matrix (by Lemma \ref{regsing}) leads  to the following observation:
	\begin{lemma}\label{propper}
		Let $X$ be a proper d. s. solution to \eqref{YBME}. Then $\N(X)$, $\R(X)$, and $\N(X)^\perp$ and $\R(X)^\perp$ are $A-$invariant and $X-$invariant subspaces.
	\end{lemma}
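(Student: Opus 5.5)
The plan splits the four subspaces into two groups. The $A$-invariance statements follow from the equation alone, using only that $A$ is invertible and orthogonal. The $X$-invariance of the orthogonal complements uses that a doubly stochastic matrix is a contraction.

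\textbf{Step 1 ($A$-invariance of $\N(X)$ and $\R(X)$).} If $Xv=0$, then $AXAv=XAXv=0$. Since $A$ is invertible, $XAv=0$, so $A\N(X)\subseteq\N(X)$. For the range, $\R(AXA)=A\R(X)$ because $A$ is invertible, and $AXA=XAX$ shows $\R(AXA)\subseteq \R(X)$. Hence $A\R(X)\subseteq\R(X)$, with equality by dimension.

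\textbf{Step 2 ($A$-invariance of the complements).} Since $A$ is invertible and leaves the finite-dimensional subspaces from Step 1 invariant, they are also $A^{-1}$-invariant. As $A$ is a permutation, $A^{-1}=A^T$, so they are $A^T$-invariant. A subspace is $A^T$-invariant exactly when its orthogonal complement is $A$-invariant, which gives the claim for $\N(X)^\perp$ and $\R(X)^\perp$. The $X$-invariance of $\N(X)$ and $\R(X)$ themselves is trivial.

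\textbf{Step 3 (reducing the remaining claims to $\N(X)=\N(X^T)$).} $X$ maps everything into $\R(X)$. So $X\R(X)^\perp\subseteq\R(X)^\perp$ holds iff $X\R(X)^\perp=0$, i.e. iff $\N(X^T)\subseteq\N(X)$. Likewise $X\N(X)^\perp\subseteq\N(X)^\perp$ means that $\R(X^T)$ is $X$-invariant. Both follow once we show that $M:=\R(X)$ reduces $X$, i.e. is invariant under both $X$ and $X^T$. Indeed, then $X^T$ also leaves $M^\perp$ invariant, and $X M^\perp\subseteq M\cap M^\perp=0$.

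\textbf{Step 4 (the main obstacle: showing $M$ reduces $X$).} I would argue in three parts.

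First, rank stability. Since $A\R(X)=\R(X)$, we get $\R(X^2)=X\R(X)=X A\R(X)=\R(XAX)=\R(AXA)=A\R(X)=\R(X)$. Hence $\N(X)\cap\R(X)=0$, and $X_M:=X|_M$ is invertible on $M$.

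Second, unimodular spectrum. $M$ is invariant under $A$ and $X$, so restricting the equation gives $A_MX_MA_M=X_MA_MX_M$. Taking determinants, as in Lemma \ref{regsing}, yields $\det X_M=\det A_M$. Since $A$ is orthogonal and $A_M$ is a bijection of $M$, $A_M$ is unitary on $M$, so $|\det X_M|=1$. A doubly stochastic $X$ satisfies $\|X\|_2\le 1$ (Schur test with row and column sums $1$), so $X_M$ is a contraction. Every eigenvalue of $X_M$ has modulus at most $1$ and their product has modulus $1$, so all eigenvalues of $X_M$ are unimodular.

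Third, the contraction fact. For a contraction $T$ and $|\lambda|=1$, $Tv=\lambda v$ implies $T^*v=\bar\lambda v$, because $\|T^*v-\bar\lambda v\|^2=\|T^*v\|^2-\|v\|^2\le 0$. Moreover, unimodular eigenvalues of a contraction have no nontrivial Jordan blocks, since powers stay bounded. Working over $\mathbb{C}$, $M\otimes\mathbb{C}$ is therefore spanned by eigenvectors $v$ of $X$ with unimodular eigenvalues. Each such $v$ satisfies $X^T v=\bar\lambda v\in M\otimes\mathbb{C}$, so $X^TM\subseteq M$. Thus $M$ reduces $X$, and Step 3 finishes the proof.

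The delicate point is this last part, where the contraction property is genuinely needed; Steps 1–3 are routine.
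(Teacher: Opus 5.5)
Your proof is correct, and for the key part it takes a genuinely different route from the paper's, one that closes a gap in the paper's argument. Your Steps 1--2 agree with the paper in substance. The paper obtains the $A$-invariance of the complements through a roundabout manipulation with $A^*$; your observation that $AW=W$ forces $A^{T}W=A^{-1}W=W$ is cleaner.

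For the $X$-invariance of $\mathcal{N}(X)^\perp$ and $\mathcal{R}(X)^\perp$, the paper argues purely algebraically. It supposes that some nonzero $v\in\mathcal{N}(X)^\perp$ has $Xv\in\mathcal{N}(X)$ and derives a contradiction; for $\mathcal{R}(X)^\perp$ it appeals to $\mathcal{R}(X^2)=\mathcal{R}(X)$. The first argument negates the wrong statement: non-invariance means $Xv\notin\mathcal{N}(X)^\perp$ for some $v$, not $Xv\in\mathcal{N}(X)$. What it actually proves is $\mathcal{R}(X)\cap\mathcal{N}(X)=\{0\}$, which is your rank-stability step. The second inference, from $Xw\in\mathcal{R}(X^2)$ to $w\in\mathcal{R}(X)$, is unjustified, since every $Xw$ lies in $\mathcal{R}(X)=\mathcal{R}(X^2)$.

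No argument using only the equation and the unitarity of $A$ can work. For $A=I_2$, the idempotent $X=\left[\begin{smallmatrix}1&1\\0&0\end{smallmatrix}\right]$ solves $AXA=XAX$ and satisfies $\mathcal{R}(X)\cap\mathcal{N}(X)=\{0\}$, yet $X(1,1)^t=(2,0)^t\notin\mathcal{N}(X)^\perp$. Your Step 4 supplies exactly the missing metric ingredient, $\|X\|_2\le 1$. Combined with the determinant identity, it puts the spectrum of $X_M$ on the unit circle. The eigenvector property of contractions then gives $X^TM\subseteq M$, i.e.\ $\mathcal{N}(X)=\mathcal{N}(X^T)$. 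As a by-product you also get the paper's next lemma: $X^TX$ and $XX^T$ both equal the orthogonal projector onto $\mathcal{R}(X)$, so $X$ is a normal partial isometry.

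Two small remarks. First, in Step 3 the inclusion you actually use is $XM^\perp\subseteq M^\perp$, which comes from $X^TM\subseteq M$; the $X^T$-invariance of $M^\perp$ that you mention comes instead from $XM\subseteq M$. The claim for $\mathcal{N}(X)^\perp$ also needs a one-line count: $M^\perp\subseteq\mathcal{N}(X)$ and $\dim M^\perp=\dim\mathcal{N}(X)$, hence $\mathcal{N}(X)^\perp=M$. Second, you can bypass the Jordan-block argument. The bounds $\|X_M\|\le 1$ and $|\det X_M|=1$ force all singular values of $X_M$ to equal $1$, so $\|Xv\|=\|v\|$ on $M$. Hence $\|X^TXv-v\|^2=\|X^TXv\|^2-\|v\|^2\le 0$ for $v\in M$, and $X^TM=X^TXM=M$.
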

	\begin{proof}
		Since $AXA=XAX$ and $XAX:\N(X)\to \{0\}$, it follows that 
		$$AXA:\N(X)\to\{0\}.$$
		However, $A$ is invertible, therefore $AXA:\N(X)\to\{0\}\Leftrightarrow XA:\N(X)\to \{0\}\Leftrightarrow A:\N(X)\to \N(X)$. Similarly,
		$$\R(X)\supset\R(XAX)=\R(AXA)=\R(AX)=A\left(\R(X)\right).$$
		On the other hand, recall that every permutation is a unitary matrix (operator), so $W$ is an invariant subspace for $A$ if and only if $W^\perp$ is an invariant subspace for $A$: for any $u\in W^\perp$ and any $v\in W$ we have
		$$0=\langle u, Av\rangle=\langle A^*u,v\rangle,$$
		thus $W^\perp$ is $A^*-$invariant, and since $A^*$ is invertible the set equality $\left(A^*\right)^2(W^\perp)=A^*(W^\perp)=W^\perp$ is true. Then 
		$$0=\langle A^*u,v\rangle=\langle A^*AA^* u, v\rangle=\langle A\left(A^*\right)^2u, v\rangle=\langle \left(A^*\right)^2u, A^*v\rangle,$$
		implying that $A^*v\in W$, i.e. $W$ is $A^*-$invariant subset, and $A^*(W)=\left(A^*\right)^2(W)=W$, setwise speaking. This concludes that
		$$0=\langle u, A^*v\rangle=\langle Au,v\rangle$$
		so $W^\perp$ is also $A-$invariant subspace.
		
		To prove that $X:\N(X)^\perp\to \N(X)^\perp$, assume otherwise: suppose there exists a nonzero vector $v\in\N(X)^\perp$ such that $Xv\in \N(X)$. Then from the proved part of the claim it follows that
		$$0=X\underbrace{AXv}_{\in\N(X)}=AXAv\Leftrightarrow XAv=0\Leftrightarrow Av\in\N(X),$$
		which contradicts that $A:\N(X)^\perp\to\N(X)^\perp$.
		Finally, since $A$ is a bijection on both $\R(X)$ and $\R(X)^\perp$ we have
		$$\R(X^2)=X\left(\R(X)\right)=X\left(\R(AX)\right)=\R(XAX)=\R(AXA)=\R(X).$$
		So if there exists a $w\in\R(X)^\perp$ such that $Xw\in\R(X)=\R(X^2)$ then $w\in \R(X)$ ergo $w=0$.
	\end{proof}
	
	\begin{lemma}\label{partialisom} If $X$ is a proper d. s. solution to \eqref{YBME}, then  $X$ is a normal partial isometry.
	\end{lemma}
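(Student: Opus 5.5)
The plan is in two parts. First, prove that $X$ is a partial isometry by comparing singular values in a factorization of $X$ that comes directly from \eqref{YBME}. Then obtain normality from Lemma \ref{propper}, which gives $\R(X)=\N(X)^\perp$.

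\textbf{Step 1: every singular value of $X$ is at most $1$.} Write $s_1(X)\geq\dots\geq s_n(X)\geq 0$ for the singular values of $X$. Since $X$ is doubly stochastic, $\|X\|_1=\|X\|_\infty=1$. Hence $\|X\|_2\leq\sqrt{\|X\|_1\|X\|_\infty}=1$, so $s_1(X)\leq 1$.

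\textbf{Step 2: a factorization of $X$ by unitary conjugates.} Since $A$ is invertible, \eqref{YBME} gives
$$X=A^{-1}(AXA)A^{-1}=A^{-1}(XAX)A^{-1}=\bigl(A^{-1}XA\bigr)\bigl(XA^{-1}\bigr).$$
Put $Y=A^{-1}XA$ and $Z=XA^{-1}$. Because $A$ is orthogonal, both $Y$ and $Z$ have exactly the singular values of $X$.

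\textbf{Step 3: Horn's inequality forces the nonzero singular values to equal $1$.} Let $r=\operatorname{rank}X\geq 1$. Horn's multiplicative inequality $\prod_{i\le k}s_i(YZ)\le\prod_{i\le k}s_i(Y)s_i(Z)$, applied with $k=r$, gives
$$p:=\prod_{i=1}^r s_i(X)\;\leq\;\prod_{i=1}^r s_i(X)^2=p^2 .$$
Here $0<p\leq 1$ by Step 1, so $p\leq p^2$ forces $p=1$. Since every factor lies in $(0,1]$, each nonzero singular value must equal $1$. Therefore $X^*X$ is the orthogonal projection onto $\N(X)^\perp=\R(X^*)$, and $XX^*$ is the orthogonal projection onto $\R(X)$. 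Equivalently, $XX^*X=X$, so $X$ is a partial isometry.

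\textbf{Step 4: normality.} By Lemma \ref{propper}, $X$ maps $\N(X)^\perp$ into itself. The restriction of $X$ to $\N(X)^\perp$ is injective, so by finite dimensionality $X(\N(X)^\perp)=\N(X)^\perp$. On the other hand, $X(\N(X)^\perp)=X(\RR^n)=\R(X)$. Hence $\R(X)=\N(X)^\perp$. The two projections from Step 3 therefore coincide, giving $XX^*=X^*X$, so $X$ is normal.

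The main obstacle is Step 3, namely showing that the singular values are exactly $0$ or $1$. A direct entrywise approach would require the uniform structure of Theorem \ref{udmus}, which is only available for cyclic $A$. The factorization $X=(A^{-1}XA)(XA^{-1})$ avoids this: it uses only that $A$ is orthogonal and that $\|X\|_2\leq1$. I expect this to be the only delicate point. The inputs needed are the contraction property of Step 1, Horn's inequality over the first $r$ singular values, and the positivity $p>0$, which is why the product is taken only up to $r=\operatorname{rank}X$.
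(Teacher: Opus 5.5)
Your proof is correct, but it reaches the partial-isometry property by a different route from the paper's.

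\textbf{The paper's route.} The paper first uses Lemma~\ref{propper} to split $A=A_1\oplus A_4$ and $X=0\oplus X_4$ along $\N(X)\oplus\N(X)^\perp$. It then uses the chain $\|Xu\|=\|AXAu\|=\|XAXu\|\le\|AXu\|=\|Xu\|$ for $u\in\N(X)^\perp$ to conclude that $X_4$ is an isometry, hence unitary. Normality of $0\oplus X_4$ is then read off directly.

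\textbf{Your route.} You combine the contraction bound $\|X\|_2\le 1$ with Horn's inequality applied to $X=(A^{-1}XA)(XA^{-1})$. This needs neither properness nor any part of Lemma~\ref{propper}; it uses only that $A$ is orthogonal and $X$ is a contraction. It is also more solid at this point. Since $A$ is orthogonal, the paper's first equality $\|Xu\|=\|AXAu\|$ is equivalent to $\|XAu\|=\|Xu\|$, and the paper gives no justification for that before knowing $X$ is a partial isometry with $A$-invariant initial space.

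\textbf{The shared dependency in Step 4.} For normality, both your Step 4 and the paper's block form $X=0\oplus X_4$ rely on the $X$-invariance of $\N(X)^\perp$ asserted in Lemma~\ref{propper}. The argument given there only excludes $Xv\in\N(X)$ for nonzero $v\in\N(X)^\perp$. That is weaker than $Xv\in\N(X)^\perp$, since a vector outside $\N(X)^\perp$ need not lie in $\N(X)$.

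\textbf{How to make Step 4 self-contained.} Your Step 3 lets you close this independently. You need only the $A$-invariance of $\N(X)$, $\N(X)^\perp$ and $\R(X)$, which Lemma~\ref{propper} does establish correctly.
\begin{itemize}
\item Let $M:=\N(X)^\perp$. For $u\in M$ you have $Au\in M$, so $\|XAXu\|=\|AXAu\|=\|XAu\|=\|u\|=\|AXu\|$.
\item A partial isometry with initial space $M$ satisfies $\|Xw\|=\|P_Mw\|$. Hence it preserves the norm of $w$ only if $w\in M$, which gives $AXu\in M$.
\item As $u$ runs over $M$, $AXu$ runs over $A\R(X)=\R(X)$. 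So $\R(X)\subseteq\N(X)^\perp$, and equality follows because both have dimension $\operatorname{rank}X$.
\end{itemize}
This yields $\R(X)=\N(X)^\perp$, and hence $XX^*=X^*X$, without using the weak step of Lemma~\ref{propper}.
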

	\begin{proof}
		Lemma \ref{propper} implies that the matrix $A$ allows the decomposition:
		\begin{equation}
			\label{uniA}
			A=\mat{A_1}{0}{0}{A_4}:\left[\begin{array}{c}\N(X)\\\N(X)^\perp\end{array}\right]\to\left[\begin{array}{c}\N(X)\\\N(X)^\perp\end{array}\right],
		\end{equation}
		where the operator $A$ from \eqref{uniA} remains a unitary transform, and its eigenvalues remain intact (having the moduli equal to $1$). On the other hand, the operator $X$ now decomposes as
		\begin{equation}
			\label{uniX}
			X=\mat{0}{0}{0}{X_4}:\left[\begin{array}{c}\N(X)\\\N(X)^\perp\end{array}\right]\to\left[\begin{array}{c}\N(X)\\\N(X)^\perp\end{array}\right]. 
		\end{equation}
		Plugging in \eqref{uniA}--\eqref{uniX} into \eqref{YBME} gives
		\begin{equation}\label{unitaryX4}X_4A_4X_4=A_4X_4A_4.\end{equation}
		Moreover, notice that $\sigma(X_4)=\sigma(X)\setminus\{0\}$. The fact that $X$ is a partial isometry follows from
		$$\|Xu\|=\|AXAu\|=\|XAXu\|\leq \|AXu\|=\|Xu\|$$
		for any $u\in\N(X)^\perp$, so the inequality in the latter is actually an equality and the operator $X$ preserves the vector norm on $\N(X)^\perp$. By definition, $X$ is a partial isometry, while $X_4$ is an isometry in $\N(X)^\perp$. But the space $\N(X)^\perp$ is finite-dimensional, so $X_4$ is also surjective, meaning that it must be unitary (a surjective isometry), rendering the solution matrix $X$ from \eqref{uniX} normal by construction. 
	\end{proof} 
	
	Recall that a matrix $Y$ is said to be an inner inverse for $X$ if $XYX=X$. 
	
	\begin{corollary}
		Let $X$ be a proper doubly stochastic solution to \eqref{YBME}. There exists an inner inverse $Y$ for $X$ which is also a doubly stochastic matrix.
	\end{corollary}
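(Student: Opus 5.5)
Since Lemma \ref{partialisom} shows that $X$ is a normal partial isometry, the natural candidate for a doubly stochastic inner inverse is the transpose $Y:=X^T=X^*$. The plan has two parts: first verify that $X^T$ is an inner inverse of $X$, then verify that it is doubly stochastic.

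\textbf{Step 1: $XX^TX=X$.} Recall the standard characterization: $X$ is a partial isometry if and only if $X^*X$ is the orthogonal projection onto $\N(X)^\perp$, which is equivalent to $XX^*X=X$. This can be read off directly from the block decomposition \eqref{uniX} with respect to $\N(X)\oplus\N(X)^\perp$. There $X=\operatorname{diag}(0,X_4)$ with $X_4$ unitary on $\N(X)^\perp$. This decomposition is orthogonal, so adjoints are computed blockwise: $X^*=\operatorname{diag}(0,X_4^*)$. Hence
$$XX^*X=\operatorname{diag}(0,X_4X_4^*X_4)=\operatorname{diag}(0,X_4)=X.$$
Since we work over the reals, $X^*=X^T$, and therefore $XX^TX=X$.

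\textbf{Step 2: $X^T$ is doubly stochastic.} The transpose of a doubly stochastic matrix is doubly stochastic. Its entries remain nonnegative, and transposition swaps row sums with column sums, all of which equal $1$. So $Y=X^T$ is doubly stochastic, which proves the corollary.

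\textbf{Main point to check.} The only step needing care is that \eqref{uniX} is genuinely an orthogonal block decomposition, so that the adjoint is blockwise. In particular, the off-diagonal blocks of $X$ must vanish, and this requires both $\N(X)$ and $\N(X)^\perp$ to be $X$-invariant. That is exactly the content of Lemma \ref{propper}, which we take as established. As a sanity check, for $X=U_{n_0}\otimes D$ one has $X^T=U_{n_0}\otimes D^T$, and $XX^TX=U_{n_0}^3\otimes DD^TD=U_{n_0}\otimes D=X$.

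Additionally, since $X$ is normal, $X^TX=XX^T$. This means $X^T$ is in fact the Moore--Penrose inverse of $X$: a doubly stochastic $\{1,2,3,4\}$-inverse. We may remark on this, but it is not needed for the statement.
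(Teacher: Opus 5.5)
Your proof is correct, but it takes a different route from the paper. The paper argues in one line by citation. It invokes the cited Theorem~3 of Prasad (a normal doubly stochastic matrix is a partial isometry if and only if it admits a doubly stochastic inner inverse), and Lemma \ref{partialisom} supplies both normality and the partial-isometry property.

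You instead exhibit the inner inverse explicitly as $Y=X^T$, which is a hand proof of the easy direction of that equivalence. Your version is self-contained and names a canonical $Y$. It also uses less of Lemma \ref{partialisom}: only that $X$ is a partial isometry, with normality playing no role.

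You can also drop the detour through \eqref{uniX}. The identity $XX^*X=X$ holds for every partial isometry, since $X^*X$ is the orthogonal projection onto $\N(X)^\perp$. So you need neither the block form nor the $X$-invariance of $\N(X)^\perp$ from Lemma \ref{propper}. This is worth knowing, because the paper's argument for that invariance only rules out $Xv\in\N(X)$ for $0\neq v\in\N(X)^\perp$, which is weaker.

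One small correction to your closing remark. The fact that $X^T=X^\dagger$ follows from the partial-isometry property alone. What normality adds is $XX^T=X^TX$, so $X^T$ is then also the group inverse of $X$.

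Both routes still rest on the partial-isometry assertion of Lemma \ref{partialisom}.
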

	\begin{proof}
		From \cite[Theorem 3]{Prasad} it follows that any normal d. s. matrix is a partial isometry if and only if it has a doubly stochastic inner inverse. By Lemma \ref{partialisom} every proper d. s. solution is a normal partial isometry, so the claim is true.
	\end{proof}
	\noindent Recall the following results (see e. g. \cite{cruz}, \cite{cruzcor}, \cite{JLD},  \cite{HKF}, \cite{SGHSSP}, \cite{RJPGR}, and \cite{AWMIOBCA}):
	
	\begin{theorem} \label{pi} If $X$ is a d. s. partial isometry, then $X$ is the product of a permutation matrix and a doubly stochastic projection. 
	\end{theorem}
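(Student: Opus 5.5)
Writing $P:=X^{T}X$, I will show that $P$ is a doubly stochastic orthogonal projection and that $X$ agrees with some permutation matrix $\Pi$ on $\R(P)=\N(X)^\perp$. Then $X=XP=\Pi P$, which is the claim.

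\textbf{Step 1: $P$ is a d.\,s. orthogonal projection.} Since $X$ is a partial isometry, $X^TX$ is the orthogonal projection onto $\N(X)^\perp$. Since $X^T$ is doubly stochastic whenever $X$ is, and $\B_n$ is closed under products, $P\in\B_n$.

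\textbf{Step 2: structure of $P$.} Every vector in the range of a d.\,s. projection is constant on certain blocks. Let $v\in\R(P)$ and let $M=\max_i v_i$. Then $Pv=v$, and each $(Pv)_i$ is a convex combination of the entries of $v$. So at every index $i$ with $v_i=M$, the row $P(i,\cdot)$ is supported inside the level set $\{v=M\}$. Peeling off level sets in this way shows that $\R(P)$ is a lattice-closed subspace containing $\mathbf 1$. Since $P$ is also symmetric, this forces the following form. There is a partition $\{1,\ldots,n\}=B_1\cup\cdots\cup B_k$ such that, after a simultaneous permutation of indices, $P=\bigoplus_{t}U_{|B_t|}$. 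Then $\R(P)$ is spanned by the indicator vectors $\chi_{B_t}$. I expect this step to be the main technical obstacle. I would first try the known characterization of symmetric idempotent d.\,s. matrices (Sinkhorn, Farahat), which the cited references provide. Failing that, I would use a max-principle argument on each row.

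\textbf{Step 3: action of $X$ on the blocks.} Since $X$ is an isometry on $\R(P)$ and preserves nonnegativity, consider $w_t:=X\chi_{B_t}$. These vectors are nonnegative, pairwise orthogonal, and satisfy $\|w_t\|^2=|B_t|$. Their coordinate sums are $\mathbf 1^T X\chi_{B_t}=|B_t|$. Pairwise orthogonality of nonnegative vectors means their supports $S_t$ are disjoint. We also have $\sum_t w_t=X\mathbf 1=\mathbf 1$, so each $w_t$ is the indicator vector of its support, $w_t=\chi_{S_t}$, and $|S_t|=\|w_t\|^2=|B_t|$. The sets $S_t$ therefore form a partition with the same block sizes as the $B_t$.

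\textbf{Step 4: assembling the permutation.} Choose a permutation matrix $\Pi$ mapping each $B_t$ bijectively onto $S_t$. Then $\Pi\chi_{B_t}=\chi_{S_t}=X\chi_{B_t}$ for all $t$, so $\Pi$ and $X$ coincide on $\R(P)$. It follows that $X=XP=\Pi P$, a permutation matrix times a d.\,s. projection. As a sanity check, one can also verify directly that $\Pi P$ is doubly stochastic.
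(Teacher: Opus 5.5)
Your argument is correct. It necessarily takes a different route from the paper's, because the paper does not prove this statement: Theorem \ref{pi} is only recalled from the cited literature, next to the structure theorem for d.s.\ projections (Theorem \ref{projection}).

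You reduce the former to the latter with one elementary extra step. $X$ sends the pairwise orthogonal indicators $\chi_{B_t}$, which span $\N(X)^\perp$, to nonnegative, pairwise orthogonal vectors (by polarization). Their sum is $X\mathbf 1=\mathbf 1$. Disjointness of supports then forces $X\chi_{B_t}=\chi_{S_t}$ for a partition $\{S_t\}$ with $|S_t|=|B_t|$. All of this is sound.

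Step 2 needs a little tightening if you avoid citing Theorem \ref{projection}. To peel off a level set $L$ of $v\in\R(P)$, you also need $P(j,i)=0$ for $j\notin L$, $i\in L$. This follows from the symmetry of $P$ (or from its column sums). Hence $P$ splits as a direct sum along the level sets of $v$. The common refinement of these partitions over a basis of $\R(P)$ then gives blocks $B_t$ with $P\chi_{B_t}=\chi_{B_t}$ that span $\R(P)$.

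Compared with the bare citation, your route buys three things:
\begin{itemize}
\item a self-contained proof showing that the two recalled theorems are not independent;
\item an explicit identification of the factors: $P=X^TX$, and via $X=(\Pi P\Pi^T)\Pi$ the left factor $Z=XX^T$ in the form $X=ZQ$ used in Lemma \ref{necescomm};
\item a precise description of the non-uniqueness of the permutation factor: every $\Pi$ mapping each $B_t$ bijectively onto $S_t$ works. This is exactly the ``family of permutations'' the paper mentions when recovering the $Q_{p_i}$.
\end{itemize}
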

	
	\begin{theorem}\label{projection} Every doubly stochastic projection (idempotent) is permutationally similar to a block-diagonal matrix
			$$\left(\frac{1}{k_1}\mathbf{1}_{k_1}\right)\oplus\left(\frac{1}{k_2}\mathbf{1}_{k_2}\right)\oplus\ldots\oplus\left(\frac{1}{k_p}\mathbf{1}_{k_p}\right),$$
			for some positive integer $p$ and some positive integers $k_1,\ldots,k_p$.
		\end{theorem}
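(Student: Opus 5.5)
The plan is to show first that a doubly stochastic idempotent $P$ is automatically symmetric, and then to read off the block structure from the support pattern of $P$. For symmetry, I would use the fact recalled in the Preliminaries that $P\in\B_n$ is a convex combination of permutation matrices. Every permutation matrix is orthogonal, so the spectral norm satisfies $\|P\|_2\le 1$. An idempotent of norm at most one is an orthogonal projection: for $u\in\R(P)$ and $w\in\N(P)$, the inequality $\|P(u+tw)\|\le\|u+tw\|$ for all real $t$ forces $\langle u,w\rangle=0$. Hence $\R(P)\perp\N(P)$, and therefore $P=P^T$.

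Next I would study the relation $i\sim j \Leftrightarrow P(i,j)>0$ and show it is an equivalence relation.
\begin{itemize}
\item Symmetry of $\sim$ follows from $P=P^T$.
\item Transitivity follows from nonnegativity and idempotence: $P(i,k)=\sum_s P(i,s)P(s,k)\ge P(i,j)P(j,k)>0$.
\item Reflexivity uses symmetry again: $P(i,i)=\sum_j P(i,j)P(j,i)=\sum_j P(i,j)^2>0$, since row $i$ sums to one and so is nonzero.
\end{itemize}
Let the equivalence classes have sizes $k_1,\ldots,k_p$. Choose a permutation matrix $T$ that lists the classes consecutively. Then $T^{-1}PT=B_1\oplus\cdots\oplus B_p$, because $P(i,j)=0$ whenever $i\not\sim j$. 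Each $B_r$ is entrywise positive, doubly stochastic (the row and column sums are unaffected by the discarded zeros), and idempotent, since block-diagonal multiplication acts blockwise.

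It remains to show that a positive $k\times k$ doubly stochastic idempotent $B$ equals $\frac1k\mathbf{1}_k$; I expect this to be the only step needing a genuine tool. I would invoke Perron--Frobenius for positive matrices: the spectral radius $1$ (with eigenvector $\mathbf 1$) is a simple eigenvalue. Since the eigenvalues of an idempotent lie in $\{0,1\}$ and $B$ is diagonalizable, this gives $\operatorname{rank}B=1$. Hence $B=uv^T$ for some vectors $u,v$. The conditions $B\mathbf 1=\mathbf 1$ and $\mathbf 1^TB=\mathbf 1^T$ force $u$ and $v$ to be constant vectors, and $B^2=B$ (equivalently, row sums equal to one) fixes the constant so that $B=\frac1k\mathbf 1\mathbf 1^T$.

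As an elementary alternative avoiding Perron--Frobenius, one can argue directly. Let $m$ be the maximal entry of column $j$. The identity $B(i,j)=\sum_s B(i,s)B(s,j)$ expresses each entry of column $j$ as a strictly positive convex combination of that column's entries. Choosing $i$ with $B(i,j)=m$ forces every entry of column $j$ to equal $m$. So each column is constant, and the row and column sums then give the value $\frac1k$. Substituting back yields exactly the block form claimed in Theorem \ref{projection}.
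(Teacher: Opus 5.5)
Your proof is correct and complete. The paper gives no proof of Theorem~\ref{projection}. It only recalls the result from the literature cited just before Theorem~\ref{pi}, so your argument is a self-contained replacement rather than a variant of an in-paper proof.

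The order of ideas also differs from how the paper uses the result. The paper takes the block form $\bigoplus_r \frac{1}{k_r}\mathbf{1}_{k_r}$ for granted. Only afterwards does it observe that each block, and hence $Z$, is an orthogonal projector, a fact it then relies on in Lemma~\ref{necescomm} (e.g.\ in $Z=ZQQ^*Z^*$). You instead prove symmetry first, via $\|P\|_2\le 1$ and the fact that a contractive idempotent is orthogonal. Symmetry is exactly what makes the support relation $i\sim j\Leftrightarrow P(i,j)>0$ an equivalence relation. That gives the permutation-similarity to a direct sum of entrywise positive doubly stochastic idempotents almost for free. So your route yields, as a by-product, the orthogonality of doubly stochastic idempotents that the paper needs later.

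Both of your endings for the positive blocks are valid. The Perron--Frobenius version gives $\operatorname{rank}B=1$; the maximum-principle version shows every column is constant. The second is preferable because it keeps the whole proof elementary. You can also drop Birkhoff's theorem from the first step, since $\|P\|_2^2\le\|P\|_1\|P\|_\infty=1$ holds for every doubly stochastic $P$.
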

	
Let $Z$ be any doubly stochastic projector. There exists an arrangement of the base vectors $e_1,\ldots,e_n$ such that $Z$ reads in that basis as
	\begin{equation}
		\label{diagZ}Z=\operatorname{diag}\left(\frac{1}{k_1}\mathbf{1}_{k_1},\ldots,\frac{1}{k_p}\mathbf{1}_{k_p}\right)=\left[\begin{array}{ccccc}\frac{1}{k_1}\mathbf{1}_{k_1} & 0 & 0 &\cdots & 0\\
			0 & \frac{1}{k_2}\mathbf{1}_{k_2} & 0 & \cdots & 0\\
			\vdots & \vdots & \ddots & \vdots & \vdots\\
			0 & 0 & \ldots & 0 & \frac{1}{k_p}\mathbf{1}_{k_p}\end{array}\right],\end{equation}
	for some positive integer $p$ and some positive (not necessarily different) integers $k_1,\ldots,k_p$. Moreover, we assume the base vectors $e_1,\ldots,e_n$ to be arranged in such a fashion so that the same integers $k_i$ are grouped together, i.e., there exists an integer $m\in\{1,\ldots, p\}$ and there exist different integers $k_{p_1},\ldots, k_{p_m}$, such that the matrix $Z$ from \eqref{diagZ}  reads
		\begin{equation}
			\label{diagpZ}Z=\left[\begin{array}{ccccc}Z_{p_1} & 0 & 0 &\cdots & 0\\
				0 & Z_{p_2} & 0 & \cdots & 0\\
				\vdots & \vdots & \ddots & \vdots & \vdots\\
				0 & 0 & \ldots & 0 & Z_{p_{m}}\end{array}\right],\end{equation} 
	where 	\begin{equation}
			\label{Zpi}Z_{p_i}=\operatorname{diag}\left(\frac{1}{k_{p_{i}}}\mathbf{1}_{k_{p_i}},\ldots,\frac{1}{k_{p_{i}}}\mathbf{1}_{k_{p_i}}\right) 
		\end{equation}
	is a constant block-diagonal matrix corresponding to the integer $k_{p_i}$, for each $i\in\{1,\ldots,m\}$. Accordingly, we denote their dimensions as $d_{p_i}=\dim Z_{p_i}$ for each $i$. From \eqref{Zpi} it is clear that $k_{p_i}|d_{p_i}$, and we define $s_{p_i}=\frac{d_{p_i}}{k_{p_i}}$ for each $i\in\{1,\ldots,m\}$.

If  there is  a block $Z_{p_i}=U_{k_{p_i}}\equiv U_{d_{p_i}}$ which is the uniform distribution matrix (of conformable dimensions), then  we have $\R(Z_{p_i})=\operatorname{span}\{\mathbf{u}\}$, where $\mathbf{u}=(1,\ldots,1)^t\in \RR^{d_{p_i}}$ is the ones' vector, and $\N(Z_{p_i})=\R(Z_{p_i})^\perp$ in the space $\RR^{d_{p_i}}$. In that case $s_{p_i}=1=\dim \R(Z_{p_i})$.

More generally, if a block $Z_{p_i}$ consists out of  several blocks $U_{k_{p_i}}$, then, by \eqref{Zpi}, $Z_{p_i}$ comprises out of exactly $s_{p_i}$ identical blocks $U_{k_{p_i}}$, and by construction $Z_{p_i}\in U_{d_{p_i}}(k_{p_i})$. In that case, we have
				$$\R(Z_{p_i})=\operatorname{span}\{\mathbf{u}_j,\quad 1\leq j\leq s_{p_i}\},$$
				where
				$$\mathbf{u}_j=\left(\underbrace{0,\ldots,0}_{(j-1)\cdot k_{p_i}},\underbrace{1,\ldots,1}_{k_{p_i}},\underbrace{0,\ldots,0}_{(s_{p_i}-j)\cdot k_{p_i}}\right)^t,\quad 1\leq j\leq s_{p_i},$$
$\dim \R(Z_{p_i})=s_{p_i}$, and $\N(Z_{p_i})=\R(Z_{p_i})^\perp$. In other words, each block $Z_{p_i}$ is an orthogonal projector on the space $\RR^{d_{p_i}}$, and by \eqref{diagpZ}, so is the matrix $Z$. This gives the following  result:
	\begin{lemma}\label{necescomm}  Let $X$ be a proper doubly stochastic solution to \eqref{YBME}, represented as 
			\begin{equation}\label{XZQ}
				X=ZQ,
			\end{equation}
			where $Q$ is a permutation matrix and $Z$ is a doubly stochastic projection. 
		Then $XX^*=X^*X=Z$ and $$[Q,Z]=[X,Z]=[A,Z]=0.$$
	\end{lemma}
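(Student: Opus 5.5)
The plan is to compute both Gram products of $X=ZQ$ directly and then read the three commutation relations off the equality $XX^*=X^*X$, which Lemma \ref{partialisom} provides. From the discussion after Theorem \ref{projection}, $Z$ is permutationally similar to a block diagonal matrix with blocks $\frac{1}{k}\mathbf{1}_k$, so $Z$ is symmetric, $Z=Z^*=Z^2$, and is therefore an orthogonal projector. Since $Q$ is a permutation matrix, $QQ^*=Q^*Q=I$.

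\textbf{Step 1.} First I compute
$$XX^*=ZQQ^*Z^*=Z^2=Z,\qquad X^*X=Q^*Z^*ZQ=Q^*ZQ.$$
By Lemma \ref{partialisom}, $X$ is normal, so $X^*X=XX^*=Z$. This proves $XX^*=X^*X=Z$.

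\textbf{Step 2.} The same equality gives $Q^*ZQ=Z$, and multiplying on the left by $Q$ yields $ZQ=QZ$, that is, $[Q,Z]=0$. For $[X,Z]$ there are two routes. One is $XZ=ZQZ=Z^2Q=ZQ=X=Z\cdot ZQ=ZX$. The other is that a normal $X$ commutes with $X^*X=Z$.

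\textbf{Step 3.} For $[A,Z]=0$, I will identify $Z$ as the orthogonal projector onto $\N(X)^\perp$. In general $X^*X$ is the orthogonal projector onto $\N(X)^\perp$ exactly when $X$ is a partial isometry, which holds here by Lemma \ref{partialisom}. Concretely, $\N(X^*X)=\N(X)$, and $Z=X^*X$ is an orthogonal projector, so $\R(Z)=\N(Z)^\perp=\N(X)^\perp$. By Lemma \ref{propper}, both $\N(X)$ and $\N(X)^\perp$ are $A$-invariant. With respect to $\RR^n=\N(X)\oplus\N(X)^\perp$ the matrix $A$ is then block diagonal, as in \eqref{uniA}, while $Z=\operatorname{diag}(0,I)$. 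Hence $AZ=ZA$.

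The only place that needs care is Step 3, specifically justifying $\R(Z)=\N(X)^\perp$. I will rely on $\N(X^*X)=\N(X)$, which follows from $\langle X^*Xv,v\rangle=\|Xv\|^2$, together with the orthogonality of the projector $Z$. The rest is direct algebra.
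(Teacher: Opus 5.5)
Your proof is correct. Steps 1--2 coincide with the paper's opening computation $Z=ZQQ^*Z^*=XX^*=X^*X=Q^*ZQ$, which gives $[Q,Z]=[X,Z]=0$. You differ from the paper in how you prove $[A,Z]=0$.

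The paper splits $\RR^n=\N(Z)\oplus\N(Z)^\perp$. On $\N(Z)=\N(X)$ it uses the $A$-invariance of $\N(X)$, as you do. On $\N(Z)^\perp$ it uses the equation once more to get $AXA=ZAXAZ$, hence $X(AZ-ZA)X=0$. It then appeals to the invertibility of $X$ on the $X$-invariant subspace $\N(X)^\perp$.

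You instead identify $Z=X^*X$ as the orthogonal projector onto $\N(X)^\perp$. You then use that $\N(X)^\perp$ is itself $A$-invariant (Lemma \ref{propper}; this follows from the invariance of $\N(X)$ because $A$ is orthogonal). So $A$ is reduced by $\N(X)\oplus\N(X)^\perp$ and commutes with $Z=\operatorname{diag}(0,I)$.

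Your route is shorter and does not need the $X$-invariance of $\N(X)^\perp$. It is also the argument that actually carries the weight. Since $XZ=ZX=X$, one has $X(AZ-ZA)X=XAX-XAX=0$ for every matrix $A$. That identity only says that $AZ-ZA$ maps $\R(X)=\R(Z)$ into $\N(X)=\N(Z)$, which is automatic: $(AZ-ZA)u=(I-Z)Au$ for $u\in\R(Z)$. So the paper's last step implicitly needs $Au\in\N(X)^\perp$ for $u\in\N(X)^\perp$, which is exactly the invariance you invoke explicitly.
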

	\begin{proof} Since $X$ is a normal partial isometry and $Z$ is an orthogonal projector, we have
		$$Z=Z^2=ZQQ^*Z=ZQQ^*Z^*=XX^*=X^*X
		=Q^*Z^2Q=Q^*ZQ$$
		implying that $Z=XX^*=X^*X$, $QZ=ZQ$, and, consequently, $ZX=XZ$. Ergo $\N(X)=\N(Z)$. Since $\N(X)$ is $A-$invariant subspace, it follows that $A:\N(Z)\to\N(Z)$, and $(AZ)\upharpoonright_{\N(Z)}=(ZA)\upharpoonright_{\N(Z)}$. Moreover, from $AXA=XAX$ we get
		$$AXA=XX^*XAXX^*X=ZXAXZ=ZAXAZ,$$
		so the matrix $AXA$ satisfies
		$$ZAXA=Z^2AXAZ=AXA=ZAXAZ^2=AXAZ,$$
		i.e., $AXA$ commutes with $Z$ as well. But then $XAX$ commutes with $Z$, and
		$$XZAX=ZXAX=XAXZ=XAZX$$
		implying that
		$$X(AZ-ZA)X=0.$$
		Since $X$ is invertible in $\N(X)^\perp$, and $\N(X)^\perp$ is $X-$invariant subspace, the latter implies that (since $\N(Z)^\perp\equiv\N(X)^\perp$): 
		$(AZ)\upharpoonright_{\N(Z)^\perp}=(ZA)\upharpoonright_{\N(Z)^\perp}$, concluding that $AZ=ZA$ in the entire space.
	\end{proof}
	
Since $Q$ commutes with $Z$ and each block $Z_{p_i}$ in \eqref{diagpZ} is different, it follows that $Q$ has the block-diagonal form
	\begin{equation}
		\label{Qp}
		Q=\left[\begin{array}{cccc}Q_{p_1} &0 & \ldots &0\\
			0& Q_{p_2} &0  & 0\\
			\vdots & \vdots & \ddots & \vdots \\
			0 & 0 & \ldots & Q_{p_m}\end{array}\right],\quad [Q_{p_i},Z_{p_i}]=0.
	\end{equation}  
	Consequently, the solution $X$ from \eqref{XZQ} is also block-diagonal, 
	\begin{equation}
		\label{Xp}
		X=\left[\begin{array}{cccc}
			Z_{p_1}Q_{p_1} &0 & \ldots &0\\
			0& Z_{p_2}Q_{p_2} &0  & 0\\
			\vdots & \vdots & \ddots & \vdots \\
			0 & 0 & \ldots & Z_{p_m}Q_{p_m}\end{array}\right],\quad [Q_{p_i},Z_{p_i}]=0,
	\end{equation} 
	where $X_{p_i}:=Z_{p_i}Q_{p_i}$ is by definition in $\U_{d_{p_i}}(k_{p_i})$. Moreover, since $A$ also commutes with $Z$, we conclude that
	\begin{equation}
		\label{Ap}
		A=\left[\begin{array}{cccc}A_{p_1} &0 & \ldots &0\\
			0& A_{p_2} &0  & 0\\
			\vdots & \vdots & \ddots & \vdots \\
			0 & 0 & \ldots & A_{p_m}\end{array}\right], \quad [A_{p_i},Z_{p_i}]=0,
	\end{equation}
	in that basis. We summarize the previous analysis in the following theorem:
	\begin{theorem}[Necessary conditions-general case]\label{NecesQ} For a proper doubly stochastic solution $X$ decomposed as $X=ZQ$ via \eqref{XZQ}, there exists an arrangement of the basis vectors in which $Z$ has the form \eqref{diagpZ}, $Q$ has the form \eqref{Qp}, the solution $X$ has the form \eqref{Xp}, and $A$ has the form \eqref{Ap}. Consequently, the solution $X$ is a commuting one, if and only if for every $i\in\{1,\ldots,m\}$ the matrix $Q_{p_i}$ commutes with $A_{p_i}$.
	\end{theorem}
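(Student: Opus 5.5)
The argument assembles the preceding lemmas into a single change of basis. Its first part, the existence of a simultaneous block form, is bookkeeping; the second part, the commutation criterion, needs one additional observation about how $X=ZQ$ interacts with the blocks.

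First, the decomposition. By Lemma \ref{partialisom}, $X$ is a doubly stochastic partial isometry, so Theorem \ref{pi} gives $X=ZQ$ with $Q$ a permutation matrix and $Z$ a doubly stochastic projection. Theorem \ref{projection} provides a permutation $T$ for which $T^{-1}ZT$ has the form \eqref{diagZ}. Composing $T$ with a further permutation that groups blocks of equal size $k$ gives \eqref{diagpZ}. Since $T$ is a permutation, the similarity invariance \eqref{similarity} shows that $T^{-1}XT$ is again a proper doubly stochastic solution for the coefficient $T^{-1}AT$. We may therefore work in this basis throughout.

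Lemma \ref{necescomm} gives $[Q,Z]=[A,Z]=0$. It remains to show that a permutation $P$ commuting with $Z$ is block diagonal with respect to the grouping \eqref{diagpZ}. Suppose $PZP^{-1}=Z$. Then $P$ carries $\R(Z)$ onto itself, and $\R(Z)$ is spanned by the indicator vectors of the supports of the blocks $\frac1{k}\mathbf{1}_k$. Because $P$ permutes basis vectors and $Z$ is invariant under the corresponding conjugation, entrywise we have $Z(\pi(i),\pi(j))=Z(i,j)$. Hence $\pi$ maps each support set, on which $Z$ equals $\frac1k$, onto a support set of the same size $k$.

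Each $Z_{p_i}$ collects all blocks of one fixed size $k_{p_i}$, and these sizes are distinct for distinct $i$. Consequently $\pi$ preserves the index set of every $Z_{p_i}$, which yields the forms \eqref{Qp} and \eqref{Ap} with $[Q_{p_i},Z_{p_i}]=0$ and $[A_{p_i},Z_{p_i}]=0$. Multiplying the block-diagonal matrices $Z$ and $Q$ gives \eqref{Xp}, and $X_{p_i}\in\U_{d_{p_i}}(k_{p_i})$ because each row of $Z_{p_i}Q_{p_i}$ is a permuted row of $Z_{p_i}$.

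For the criterion, all three matrices are block diagonal, so $AX=XA$ holds exactly when $A_{p_i}Z_{p_i}Q_{p_i}=Z_{p_i}Q_{p_i}A_{p_i}$ for every $i$. Using $[A_{p_i},Z_{p_i}]=0$, this condition becomes
\begin{equation*}
Z_{p_i}\,[A_{p_i},Q_{p_i}]=0 .
\end{equation*}
The sufficiency direction is immediate.

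Necessity is the main obstacle: from $Z_{p_i}(A_{p_i}Q_{p_i}-Q_{p_i}A_{p_i})=0$ alone one cannot conclude $[A_{p_i},Q_{p_i}]=0$, because $Z_{p_i}$ is singular. My first attempt is to exploit the freedom in choosing $Q$. The factorization $X=ZQ$ is not unique, and $Q$ can be modified by any permutation that acts within each uniform block of $Z$, since such a permutation $R$ satisfies $ZR=Z$. I would normalize $Q$ by this freedom, using that $X$ restricted to $\N(Z)^\perp$ is unitary (Lemma \ref{partialisom}) and that $A$ permutes the blocks, so that $Q$ is determined by how it permutes the uniform blocks, together with a canonical choice inside each block, say order-preserving, that is compatible with $A$. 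With this normalization, $Z[A,Q]=0$ says that $AQ$ and $QA$ induce the same map on blocks. The canonical within-block choice should then force $AQ=QA$ outright.

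Carrying this out, the statement must be read as ``for a suitable choice of $Q$ in the factorization''; I would add that clarification to the proof. The remaining work is routine bookkeeping.
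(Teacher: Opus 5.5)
Your reduction to the block forms is correct and matches the paper's reasoning. The paper gives no separate proof: it cites Lemma \ref{necescomm}, asserts that a permutation commuting with $Z$ respects the grouping by block size, and states the commutation criterion without argument. Your entrywise identity $Z(\pi(i),\pi(j))=Z(i,j)$ makes the block-size step precise. You are also right that the ``only if'' half fails for an arbitrary factor $Q$. For $X=Z=U_n$ every permutation $Q$ is admissible (Remark \ref{rem2}). In the paper's own example with $\pi_A=(1\,2\,3\,4)$, the commuting (circulant) solution $X^{A_1}_2$ is produced by the second listed choice for $Q'^{A_1}_2$, namely $\pi_{Q'}=(1\,3)(2\,4)$. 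This permutation conjugates $\pi_{A'_1}=(1\,3\,2\,4)$ to its inverse instead of commuting with it. So the statement must indeed be read as ``for a suitable choice of $Q$''.

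The gap is in how you obtain that suitable $Q$. Your normalization uses only two facts: that $AQ$ and $QA$ induce the same permutation of the uniform blocks, and that $X$ is unitary on $\N(Z)^\perp$. The second holds for every $ZQ$ with $[Q,Z]=0$, so the argument never uses $AXA=XAX$. Block-level commutation alone does not yield a commuting lift. For example, take blocks $\{1,2\},\{3,4\}$, $\pi_A=(3\,4)$, and the block swap: every lift conjugates $(3\,4)$ to $(1\,2)$. Your concrete candidate also fails, since the order-preserving lift of the block swap in the paper's example is exactly $(1\,3)(2\,4)$.

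The missing idea is to use the equation, which settles necessity in one line:
\[
AX=XA \ \Longrightarrow\ A^2X=AXA=XAX=AX^2 \ \Longrightarrow\ X^2=AX \ \Longrightarrow\ ZQ^2=ZAQ \ \Longrightarrow\ X=ZQ=ZA .
\]
The third implication uses $[Q,Z]=[A,Z]=0$ and $Z^2=Z$. Hence $Q$ may be replaced by $A$ itself, which is a permutation commuting with $Z$. Then $Q_{p_i}=A_{p_i}$ commutes with $A_{p_i}$ trivially. Combined with your sufficiency direction, this proves the corrected statement. It also shows that the commuting d.\,s.\ solutions are exactly the matrices $ZA$ with $Z$ a d.\,s.\ projection commuting with $A$; the converse inclusion is a direct check.
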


	\subsection{Solving the block equations}
	
	Keeping the notation from the previous subsection, in what follows we proceed to characterize all doubly stochastic solutions. Recall that the operator $A$ is always unitary, and every $Z$ from \eqref{XZQ} must be an orthogonal projector. This means that $\R(Z)\perp\N(Z)$, and their direct (orthogonal) sum exhausts the entire space $\RR^n$. Moreover, $\R(Z)$ is $A-$invariant if and only if $\N(Z)$ is $A-$invariant subspace. Also recall the following well-known result:

\begin{lemma} Let $M,N$ be given square matrices, and $N=N^2$. Then $MN=NM$ if and only if $\R(N)$ and $\N(N)$ are $M-$invariant subspaces.
\end{lemma}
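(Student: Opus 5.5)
The plan is to use the idempotence $N=N^2$ to split the space as the direct sum $\RR^n=\R(N)\oplus\N(N)$, and to show that each implication follows from how $N$ acts on the two summands. First we record the two facts that make this work. Every $x$ decomposes as $x=Nx+(x-Nx)$ with $Nx\in\R(N)$ and $x-Nx\in\N(N)$, because $N(x-Nx)=Nx-N^2x=0$. Moreover, $N$ acts as the identity on $\R(N)$, since $N(Ny)=N^2y=Ny$, and as zero on $\N(N)$.

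For the direct implication, assume $MN=NM$. If $v=Ny\in\R(N)$, then $Mv=MNy=N(My)\in\R(N)$. If $v\in\N(N)$, then $N(Mv)=M(Nv)=0$, so $Mv\in\N(N)$. Hence both subspaces are $M$-invariant.

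For the converse, assume $\R(N)$ and $\N(N)$ are $M$-invariant, and fix an arbitrary $x$ with its decomposition $x=Nx+(x-Nx)$. We compute both sides on each summand.
\begin{itemize}
\item On $Nx\in\R(N)$: invariance gives $MNx\in\R(N)$, and $N$ is the identity there, so $NMNx=MNx$.
\item On $x-Nx\in\N(N)$: invariance gives $M(x-Nx)\in\N(N)$, so $NM(x-Nx)=0$.
\end{itemize}
Adding the two contributions yields $NMx=MNx+0=MNx$. Since $x$ was arbitrary, $NM=MN$.

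The only step needing care is the converse, namely checking that the decomposition is genuinely direct and exhausts the space. This is immediate from idempotence: if $v\in\R(N)\cap\N(N)$, then $v=Nv=0$. No orthogonality is needed, so the argument applies to arbitrary, not necessarily orthogonal, projections. In the paper's application, $N=Z$ is an orthogonal projector and $M=A$ is unitary, so the lemma applies directly.
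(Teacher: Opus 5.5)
Your proof is correct and complete. The paper gives no proof of its own: it quotes the lemma as a well-known result, and your argument via $x=Nx+(x-Nx)$ is the standard one.

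One small remark: the directness of $\R(N)\oplus\N(N)$, which you flag as the step needing care, is never actually used. The converse only needs the trivial identity $x=Nx+(x-Nx)$ and the linearity of $NM$.
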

So $A$ commutes with $Z$ if and only if $\R(Z)$ and $\N(Z)$ are $A-$invariant. In fact, if $A$ has the form \eqref{Ap}, then the same is true for every block permutation $A_{p_i}$ and every block $Z_{p_i}$. So if $Z$ has the form \eqref{diagpZ} in some basis, then in that same basis $A$ (which commutes with $Z$) reads \eqref{Ap}. Here it is imperative to assume that different blocks $Z_{p_i}$ correspond to different integers $k_{p_i}$.

\begin{theorem}\label{equivQ} Let $A$ be a permutation matrix given as \eqref{Ap}, and let $Z$ be a doubly stochastic projector given as \eqref{diagpZ} which in addition commutes with $A$. For an arbitrary doubly stochastic matrix $X$, the following statements are equivalent:
	\begin{itemize}
		\item[(a)] The matrix $X$ is a solution to \eqref{YBME} and satisfies $\N(X)=\N(Z)$.
		\item[(b)] There exist permutation matrices $Q_{p_1},\ldots,Q_{p_m}$ of conformable dimensions, where each $Q_{p_i}$ commutes with the corresponding $Z_{p_i}$ and $Q_{p_i}$ solves the Yang-Baxter-like matrix equation
		\begin{equation}\label{A4Q4perm}\left(A_{p_i}Q_{p_i}A_{p_i}\right)\upharpoonright_{\R(Z_{p_i})}=\left(Q_{p_i}A_{p_i}Q_{p_i}\right)\upharpoonright_{\R(Z_{p_i})}.\end{equation}
	\end{itemize}
	If $(a)$ or $(b)$ hold, then with respect to \eqref{diagpZ} the matrix $X$ allows the decomposition \eqref{Xp}, that is,  $X=\operatorname{diag}\left(X_{p_1},\ldots,X_{p_m}\right)$ where $X_{p_i}=Z_{p_i}Q_{p_i}$, $1\leq i\leq m$.
	\end{theorem}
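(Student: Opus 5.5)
The plan is to prove both implications by pushing everything through the block structure already obtained in Lemma \ref{necescomm} and Theorem \ref{NecesQ}. The key algebraic fact is that, once $Z$ commutes with $A$ and with $Q$, the equation $AXA=XAX$ for $X=ZQ$ becomes an equation on $\R(Z)$ alone. Indeed, $AXA=AZQA=Z(AQA)$ and $XAX=ZQAZQ=Z^2(QAQ)=Z(QAQ)$. So $AXA=XAX$ is equivalent to $Z(AQA-QAQ)=0$. Since $Z$ is an orthogonal projector commuting with $AQA$ and $QAQ$, this in turn is equivalent to
$$(AQA)\upharpoonright_{\R(Z)}=(QAQ)\upharpoonright_{\R(Z)}.$$
Here $\R(Z)$ is invariant under $A$ and $Q$, hence under both products.

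For $(a)\Rightarrow(b)$: a solution $X$ with $\N(X)=\N(Z)\neq\{0\}$ is proper (singular). By Lemma \ref{partialisom} and Theorem \ref{pi} we write $X=Z'Q$ with $Z'$ a doubly stochastic projection. Lemma \ref{necescomm} gives $Z'=X^*X$, so $\N(Z')=\N(X)=\N(Z)$. Both are orthogonal projectors, hence $Z'=Z$. The same lemma gives $[Q,Z]=0$. Because the distinct sizes $k_{p_i}$ separate the blocks of \eqref{diagpZ}, $Q$ has the form \eqref{Qp}, exactly as argued before Theorem \ref{NecesQ}. This step needs care: one must check that a permutation commuting with $Z$ maps each $U_k$ block onto a $U_k$ block of the same size, since $QZQ^{*}=Z$ permutes the constant blocks. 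Applying the key fact blockwise then yields \eqref{A4Q4perm}, because $A$, $Q$ and $Z$ are all block diagonal with respect to the same decomposition.

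For $(b)\Rightarrow(a)$: set $Q=\operatorname{diag}(Q_{p_i})$ and $X=ZQ$. Then $X$ is doubly stochastic as a product of doubly stochastic matrices, and $[Q,Z]=0$. Also $\N(X)=\N(Z)$, since $Q$ is invertible and commutes with $Z$, so that $\N(ZQ)=Q^{-1}\N(Z)=\N(Z)$. The blockwise identities \eqref{A4Q4perm} combine into $(AQA)\upharpoonright_{\R(Z)}=(QAQ)\upharpoonright_{\R(Z)}$, and the key fact gives $AXA=XAX$. The stated decomposition $X=\operatorname{diag}(Z_{p_i}Q_{p_i})$ is then immediate in both directions.

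I expect the main obstacle to be the uniqueness step $Z'=Z$ together with the block-diagonality of $Q$. Specifically, one has to justify that the projection in the factorization of Theorem \ref{pi} is orthogonal and coincides with $X^*X$. I would settle this through Lemma \ref{necescomm} and the fact that a doubly stochastic idempotent of the form \eqref{diagZ} is symmetric.
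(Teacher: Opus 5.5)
Your proposal is correct and follows essentially the paper's route. For (a)$\Rightarrow$(b) you factor $X=ZQ$ via Theorem \ref{pi} and Lemma \ref{necescomm} and restrict the equation to $\R(Z)$, and for (b)$\Rightarrow$(a) your identity $AXA-XAX=Z(AQA-QAQ)$ is a compact form of the paper's separate checks on $\N(Z_{p_i})$ and $\R(Z_{p_i})$; your argument that the factorization's projector equals the given $Z$ (both are orthogonal projectors with kernel $\N(X)$), and that $Q$ maps $U_k$-blocks to $U_k$-blocks, fills in steps the paper only asserts, and the case $\N(Z)=\{0\}$ you set aside is trivial, since then $Z=I$, $X$ is a permutation by Lemma \ref{regsing}, and $Q=X$ works.
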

	
\begin{proof} $(a)\Rightarrow(b):$
If $X$ is a solution, then we automatically have that $X$ has the form \eqref{Xp}, assuming that $A$ and $Z$ are given as \eqref{Ap} and \eqref{diagpZ}, respectively. That is, due to \eqref{XZQ}, there exists a permutation matrix $Q$ which commutes with $Z$, and $X=ZQ$, so $\N(Z)=\N(X)$ by construction. Finally, we have that $Q$ has the form (again, assuming the basis in which $Z$ has the form \eqref{diagpZ} and $A$ has the form \eqref{Ap}) \eqref{Qp}, and consequently $X_{p_i}=Z_{p_i}Q_{p_i}$, $1\leq\ i\leq m$. But then, since $X_{pi}$ must solve the block YBME $A_{p_i}X_{p_i}A_{p_i}=X_{p_i}A_{p_i}X_{p_i}$, we have that
$$\begin{aligned}
&(A_{p_i}Q_{p_i}A_{p_i})\upharpoonright_{\R(Z_{p_i})}=(Z_{p_i}A_{p_i}Q_{p_i}A_{p_i})\upharpoonright_{\R(Z_i)}=(A_{p_i}X_{p_i}A_{p_i})\upharpoonright_{\R(Z_i)}=\\
&(X_{p_i}A_{p_i}X_{p_i})\upharpoonright_{\R(Z_{p_i})}=(Z_{p_i}Q_{p_i}A_{p_i}Q_{p_i}Z_{p_i})\upharpoonright_{\R(Z_{p_i})}=(Q_{p_i}A_{p_i}Q_{p_i})\upharpoonright_{\R(Z_{p_i})}.
\end{aligned}$$
$(b)\Rightarrow(a):$ Conversely, assume $A$ and $Q$ are permutation matrices which commute with $Z$. Then $A_{p_i}$ and $Q_{p_i}$ commute with $Z_{p_i}$, for each block $1\leq i\leq m$. Define $X_{p_i}:=Z_{p_i}Q_{p_i}$. Then $\N(X)=\N(Z)$ by construction, and 
$$\begin{aligned}
&(A_{p_i}X_{p_i}A_{p_i})\upharpoonright_{\N(Z_{p_i})}=(A_{p_i}Q_{p_i}A_{p_i}Z_{p_i})\upharpoonright_{\N(Z_{p_i})}=0=\\
&(Q_{p_i}A_{p_i}Q_{p_i}Z_{p_i})\upharpoonright_{\N(Z_{p_i})}=(Z_{p_i}Q_{p_i}A_{p_i}Q_{p_i}Z_{p_i})\upharpoonright_{\N(Z_{p_i})}=(X_{p_i}A_{p_i}X_{p_i})\upharpoonright_{\N(Z_{p_i})}.
\end{aligned}$$
On the other hand, $(Z_{p_i})\upharpoonright_{\R(Z_{p_i})}=I\upharpoonright_{\R(Z_{p_i})}$, so \eqref{A4Q4perm} implies that
$$\begin{aligned}
	&(X_{p_i}A_{p_i}X_{p_i})\upharpoonright_{\R(Z_{p_i})}=(Q_{p_i}A_{p_i}Q_{p_i})\upharpoonright_{\R(Z_{p_i})}=\\
	=&(A_{p_i}Q_{p_i}A_{p_i})\upharpoonright_{\R(Z_{p_i})}=(A_{p_i}X_{p_i}A_{p_i})\upharpoonright_{\R(Z_{p_i})},
\end{aligned}$$
so $X_{p_i}$ solves the block-equation in $\N(Z_{p_i})\oplus\R(Z_{p_i})=\RR^{d_{p_i}}$, that is, $X_{p_i}$ solves the equation in the entire space $\RR^{d_{p_i}}$. Consequently, their sum $X=\bigoplus\limits_{i=1}^mX_{p_i}$ solves the equation \eqref{YBME} in the initial space $\RR^n$. 
\end{proof}

\begin{remark}\label{rem1} If there exists a block $A_{p_i}$ for which one chooses $k_{p_i}=1$, then $Z_{p_i}=I_{d_{p_i}}$ is the identity matrix, and the block $X_{p_i}$ coincides with the permutation matrix $Q_{p_i}$. In that case, the space $\N(X_{p_i})=\N(Z_{p_i})=\N(I_{d_{p_i}})=\{0\}$ is trivial, and the block $X_{p_i}\equiv Q_{p_i}$ must solve the permutation equation \eqref{A4Q4perm} in the entire space $\RR^{d_{p_i}}=\N(Z_{p_i})^\perp$. These are \emph{permutation blocks} in the (generally speaking) proper doubly stochastic solution $X$, and they are characterized in terms of Section \ref{Perm}. Consequently, a doubly stochastic solution $X$ is a permutation solution if and only if all its blocks $X_{p_i}$ in \eqref{Xp} are permutation blocks, equivalently, if and only if all doubly stochastic projections $Z_{p_i}$ are invertible, $Z_{p_i}=I_{d_{p_i}}$, for every $i\in\{1,\ldots,m\}$. In that sense, all permutation solutions are contained within this procedure.
\end{remark}

\begin{remark} \label{rem2} Similarly, if  there is  a block $A_{p_i}$ for which one chooses $k_{p_i}=d_{p_i}$, then $Z_{p_i}=X_{p_i}=U_{d_{p_i}}$ is the uniform distribution matrix. In that case, we have $\R(X_{p_i})=\R(Z_{p_i})=\operatorname{span}\{\mathbf{u}\}$, where $\mathbf{u}=(1,\ldots,1)^t\in \RR^{d_{p_i}}$ is the ones' vector, and any permutation matrix $Q_{p_i}$ leaves the space $\operatorname{span}\{\mathbf{u}\}$ invariant,
	mapping the vector $\mathbf{u}$ into itself. This implies that $Q_{p_i}$ leaves the space $\N(Z_{p_i})$ invariant as well, and the ambiguity of $Q_{p_i}$ in $\N(Z_{p_i})$ is justified in this case: for any permutation $Q_{p_i}$ it holds that
	$$X_{p_i}=Q_{p_i}Z_{p_i}=Q_{p_i}U_{d_{p_i}}=U_{d_{p_i}}.$$ 
	So the permutation matrix $Q_{p_i}$ can indeed be arbitrarily chosen in $\N(Z_{p_i})$, because it automatically solves the block equation \eqref{A4Q4perm} in $\N(Z_{p_i})^\perp$.
\end{remark}

\begin{remark} More generally, if $d_{p_i}>k_{p_i}>1$, then by \eqref{Zpi} the block $Z_{p_i}$ consists out of several blocks $U_{k_{p_i}}$,
	and there exists a unique natural number $s_{p_i}$ such that $s_{p_i}k_{p_i}=d_{p_i}$. Then $Z_{p_i}$ comprises out of $s_{p_i}$ identical blocks $U_{k_{p_i}}$. In that case, we have
	$$\R(Z_{p_i})=\operatorname{span}\{\mathbf{u}_j,\quad 1\leq j\leq s_{p_i}\}$$
	where
	$$\mathbf{u}_j=\left(\underbrace{0,\ldots,0}_{(j-1)\cdot k_{p_i}},\underbrace{1,\ldots,1}_{k_{p_i}},\underbrace{0,\ldots,0}_{(s_{p_i}-j)\cdot k_{p_i}}\right)^t,\quad 1\leq j\leq s_{p_i}.$$
	It is clear that, in general, arbitrary permutation $L_{p_i}$ acting in the space $\RR^{d_{p_i}}$ would not necessarily leave the space $\R(Z_{p_i})$ invariant. However, Lemma \ref{necescomm} guarantees that the matrix $A$ commutes with $Z$, so $A_{p_i}$ must commute with $Z_{p_i}$, rendering the space $\R(Z_{p_i})$ $A_{p_i}-$invariant. Similarly, the unknown matrix $Q_{p_i}$ must leave the space $\R(Z_{p_i})$ invariant as well, due to the same Lemma. In order to solve \eqref{A4Q4perm} in $\R(Z_{p_i})$, one can use the results from Section \ref{Perm}, or some other papers, like \cite{{ChenYong}}, \cite{QHMSJDLZ}, or \cite{MSJDQHLZ}. However, once the restrictions of $Q_{p_i}$ to $\R(Z_{p_i})$ are found, one has to recover the initial permutation matrices $Q_{p_i}$. This procedure does not provide a unique permutation, but rather an entire family of permutations $Q_{p_i}$. 
\end{remark}
The main problem is now reduced to finding suitable doubly stochastic projectors $Z$, and the convenient basis rearrangements in which the sought DS projectors would have the form \eqref{diagpZ}. So, we can start with the followiong:
\begin{center}{\textbf{Procedure for DS solutions with prescribed kernel}}
		\end{center}
		\begin{itemize}
			\item[(Step 1.)] \textbf{Input}: Permutation matrix $A$.
			\item[(Step 2.)] Choose an arbitrary $A-$invariant subspace $W$ of $\RR^n$.
			\item[(Step 3.)] Denote by $Z_W$ the orthogonal projector for which $\N(Z_W)=W$. If $Z$ is not doubly stochastic, return to Step 2 and choose the space $W$ differently. Otherwise proceed to the following step.
			\item[(Step 4.)] For the (doubly stochastic) orthogonal projector $Z_W$ let $T_W$ be the basis rearrangement matrix such that $Z'=T_W^{-1}Z_WT_W$ has the form \eqref{diagpZ}.
			\item[(Step 5.)] Denote by $A':=T^{-1}_WAT_W$. Then $A'$ has the form \eqref{Ap}.
			\item[(Step 6.)] For each block $A'_{p_i}$ of the matrix $A'$, determine $Q'_{p_i}$ which is a permutation matrix that commutes with $Z'_{p_i}$ and solves \eqref{A4Q4perm}, where now we write $A'_{p_i}$ instead of $A_{p_i}$. 
			\item[(Step 7.)] Define $X'_{p_i}=Z_{p_i}Q'_{p_i}$,\quad  $X'=\bigoplus\limits_{i=1}^m X'_{p_i}$.
			\item[(Step 8.)] Define $X_W:=T_W X' T^{-1}_W$. 
		\end{itemize}
		The above procedure produces all DS solutions with the kernel $W$.   

	\begin{theorem}[All d. s. solutions]
		Let $A$ be a permutation matrix and let $X$ be a doubly stochastic matrix. Then $X$ is a solution to \eqref{YBME} if and only if there is an $A-$invariant subspace $W$ for which $X$ is obtained as $X\equiv X_W$ by the procedure described above. In that case, $\N(X)=W$.
	\end{theorem}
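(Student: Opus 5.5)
The plan is to prove the two implications separately, using Theorem \ref{equivQ} as the engine and treating the Procedure as a bookkeeping wrapper around it.

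\emph{Sufficiency.} Suppose $X=X_W$ is produced by the Procedure for some $A$-invariant $W$. By Step 3, $Z_W$ is a doubly stochastic orthogonal projector with $\N(Z_W)=W$. Since $W$ is $A$-invariant and $A$ is unitary, the argument in Lemma \ref{propper} shows that $W^\perp=\R(Z_W)$ is also $A$-invariant. The well-known lemma on commuting with an idempotent then gives $AZ_W=Z_WA$. After the rearrangement $T_W$ of Step 4, Theorem \ref{projection} puts $Z'$ in the form \eqref{diagpZ}. Commutation is preserved under conjugation, so $A'$ commutes with $Z'$. Because distinct blocks carry distinct integers $k_{p_i}$, $A'$ has the form \eqref{Ap}; this is exactly Step 5. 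Step 6 supplies permutations $Q'_{p_i}$ satisfying hypothesis (b) of Theorem \ref{equivQ}. That theorem yields that $X'=\bigoplus Z'_{p_i}Q'_{p_i}$ solves $A'X'A'=X'A'X'$ and has $\N(X')=\N(Z')$. Clearly $X'$ is doubly stochastic. Since $T_W$ is a permutation, similarity invariance \eqref{similarity} transports everything back. Hence $X_W$ is a DS solution with $\N(X_W)=T_W\N(Z')=\N(Z_W)=W$.

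\emph{Necessity.} Let $X$ be a DS solution.
\begin{itemize}
\item If $X$ is a permutation, take $W=\{0\}$. Then $Z_W=I$, every block is a permutation block (Remark \ref{rem1}), and the Procedure with $Q=X$ reproduces $X$, since Step 6 then requires exactly $AQA=QAQ$.
\item Otherwise, set $W:=\N(X)$, which is $A$-invariant by Lemma \ref{propper}. By Lemma \ref{partialisom} and Theorem \ref{pi}, $X=ZQ$ with $Z$ a DS projection. Lemma \ref{necescomm} gives $Z=X^*X$, so $Z$ is the orthogonal projector with $\N(Z)=\N(X)=W$. Thus $Z=Z_W$, which passes Step 3.
\end{itemize}
Theorem \ref{NecesQ} provides the rearrangement in which $Z,Q,A,X$ take the forms \eqref{diagpZ}, \eqref{Qp}, \eqref{Ap}, \eqref{Xp}. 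The implication (a)$\Rightarrow$(b) of Theorem \ref{equivQ} shows that the blocks $Q_{p_i}$ are admissible choices in Step 6. Consequently $X$ coincides with the output $X_W$ for these choices.

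\emph{Main obstacle.} The delicate point is the non-uniqueness of $T_W$. The arrangement of Step 4 is determined only up to permutations preserving the form \eqref{diagpZ}, whereas Theorem \ref{NecesQ} provides one particular arrangement. I would handle this by noting that any two such arrangements differ by a permutation $S$ commuting with $Z'$. Conjugating by $S$ carries admissible families $\{Q'_{p_i}\}$ bijectively onto admissible families, because conditions (b) of Theorem \ref{equivQ} are conjugation invariant. Hence the set of outputs $X_W$ does not depend on the choice of $T_W$. A further check is needed that the Procedure's output always has kernel exactly $W$ rather than a larger space. This holds because $Q$ is invertible and commutes with $Z$, so $\N(ZQ)=Q^{-1}\N(Z)=\N(Z)=W$.
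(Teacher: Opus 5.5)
Your proof is correct and follows the same route as the paper. Sufficiency goes through Theorem \ref{equivQ} (b)$\Rightarrow$(a) plus the similarity invariance \eqref{similarity} under the permutation $T_W$, and necessity goes through Theorem \ref{NecesQ} combined with Theorem \ref{equivQ} (a)$\Rightarrow$(b), while you also make explicit points the paper leaves implicit (the permutation case $W=\{0\}$, the identification $Z=X^*X=Z_W$, and the independence from the choice of $T_W$; for the last, if the order of the groups $k_{p_i}$ in \eqref{diagpZ} is not fixed, $S=T_1^{-1}T_2$ intertwines two normal forms rather than commuting with a single $Z'$, which does not affect your transport argument).
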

	\begin{proof} $(\Rightarrow):$ Follows from Theorem \ref{NecesQ} and Theorem \ref{equivQ}.
		
	$(\Leftarrow):$ By choosing $W$ as an $A-$invariant subspace, it follows that $W^\perp$ is also $A-$invariant. That being said, there exist a unique orthogonal projector $Z_W$ with $\N(Z_W)=W$ (equivalently, $\R(Z_W)=W^\perp$). If the projector $Z_W$ is not doubly stochastic then $W$ needs to be chosen differently. Otherwise, we can use Theorem \ref{projection} and rewrite $Z_W\equiv Z'$ so that it has the form \eqref{diagpZ}. To achieve this we use the basis rearrangement matrix $T_W$, which too must be a permutation matrix. In that case, one rewrites $A$ as $A'$ to obtain the form \eqref{Ap}. Now that By Theorem \ref{equivQ}, a matrix $X'$ is a solution to $A'X'A'=X'A'X'$ if and only if there exist suitably chosen permutation matrices $Q'_{p_i}$ which solve \eqref{A4Q4perm} with respect to $Z'_{p_i}$ and $A'_{p_i}$. Finally, by employing $T^{-1}_W$ we return the basis to the initial one, and by \eqref{similarity} we conclude that $X_W$ is a doubly stochastic solution to \eqref{YBME}.
		
	\end{proof}

	\begin{example}
	Let $\pi_A=(1\ 2\ 3\ 4)(5\ 6)$. Denote by  $A_1=(1\ 2\ 3\ 4)$, $A_2=(5\ 6)$. We identify the invariant subspaces for $A_1$ and $A_2$, since invariant subspaces of $A$ will comprise out of them. 
	\begin{itemize}
	\item Note that $A_1$ is a $4-$cycle, so we identify those invariant subspaces $W^{A_1}$ for $A_1$, which are kernels for the corresponding doubly stochastic orthogonal projectors $Z_{W^{A_1}}$ from $\RR^4$ to $\left(W^{A_1}\right)^\perp$:
	$$W^{A_1}_0=\{0\},\quad Z^{A_1}_0=I_4.$$
	$$W^{A_1}_1=\operatorname{span}\{(1,1,1,1)^t\}^\perp,\quad Z^{A_1}_1=U_4.$$
	If one chooses $W^{A_1}_{1'}=\left(W^{A_1}_{1}\right)^\perp$, then the corresponding orthogonal projector $Z^{A_1}_{1'}$ reads $$Z^{A_1}_{1'}=I_4-Z^{A_1}_1=I_4-U_4=\frac{1}{4}\left[\begin{array}{rrrr}3 & -1 & -1 & -1 \\
		-1 & 3 & -1 & -1\\-1 & -1 & 3 & -1 \\-1 & -1 & -1 & 3\end{array}\right],$$
		which is clearly not a doubly stochastic matrix. This is why we choose $Z^{A_1}_1$ instead of $Z^{A_1}_{1'}$, as explained in Step $3$ of the Procedure. Analogously, we have  $$W^{A_1}_2=\operatorname{span}\{(1, 0, -1, 0)^t, (0, 1, 0, -1)^t\},\quad Z^{A_1}_{2}=\frac{1}{2}\left[\begin{array}{cccc}1 & 0 & 1 & 0 \\
	 	0 & 1 & 0 & 1\\1 & 0 & 1 & 0 \\0 & 1 & 0 & 1\end{array}\right],$$
	 	where we choose $W^{A_1}_2$ instead of its orthogonal complement for the same reason: the projector $Z^{A_1}_{2'}=I_4-Z^{A_1}_2$ is not a doubly stochastic matrix.

	 	Notice that there are more $A_1$ invariant subspaces, 
	 	$$W_3=\operatorname{span}\{(1,-1,1,-1)^t\},\quad W_3^\perp,$$
	 	however, the corresponding orthogonal projector $Z^{A_1}_3$ with $W^{A_1}_3$ as its kernel reads
	 	$$ Z^{A_1}_3=\frac{1}{4}\left[\begin{array}{rrrr}3 & 1 & -1 & 1 \\
	 		1 & 3 & 1 & -1\\-1 & 1 & 3 & 1 \\1 & -1 & 1 & 3\end{array}\right],$$
	 		and, if one chooses $W_3^\perp$ instead of $W_3$, then the corresponding orthogonal projector reads
	 		$$Z^{A_1}_{3'}=
	 		\frac{1}{4}\left[\begin{array}{rrrr}1 & -1 & 1 & -1 \\
	 			-1 & 1 & -1 & 1\\1 & -1 & 1 & -1 \\-1 & 1 & -1 & 1\end{array}\right].$$
	 			Clearly $Z^{A_1}_3$ and $Z^{A_1}_{3'}$ are not doubly stochastic matrices, therefore we exclude them from our analysis and proceed with $W^{A_1}_0$, $W^{A_1}_1$ and $W^{A_1}_2$. In the cases for $W^{A_1}_0$ and $W^{A_1}_1$ the corresponding projectors are already in the form \eqref{diagpZ}, so there are no transformation matrices $T$ in these scenarios. For $W_0^{A_1}=\{0\}$ the set of permutation matrices $Q^{A_1}_0$  which solve $A_1Q^{A_1}_0A_1=Q^{A_1}_0A_1Q^{A_1}_0$, is precisely the set $\Omega_4$ from \eqref{Dperm}, by Remark \ref{rem1}. For $W^{A_1}_1$ the set of permutation matrices $Q^{A_1}_1$ which solve \eqref{A4Q4perm} in $\R(Z^{A_1}_1)=\left(W^{A_1}_1\right)^\perp$ is the set $ S_4$ due to Remark \ref{rem2}. Since $X^{A_1}=Z^{A_1}Q^{A_1}$ we have immediate solutions: 
	 			\begin{equation}
	 				\label{A101}
	 				X^{A_1}_1=U_4,\quad X^{A_1}_0\in\Omega_4-\textrm{arbitrary}.
		\end{equation}
		For the space $W^{A_1}_2$ the projector $Z^{A_1}_2$ is not in the form \eqref{diagpZ}. However, by introducing the matrix
		\begin{equation}
			\label{}
			T^{A_1}_2=\left(T^{A_1}_2\right)^{-1}=\left[\begin{array}{cccc}
			1 & 0 & 0 & 0\\0 & 0 & 1 & 0\\ 0 & 1 & 0 & 0\\0 & 0 & 0 & 1			\end{array}\right]
		\end{equation}
		we obtain $Z'^{A_1}_2=\mat{U_2}{0}{0}{U_2}$, while $A'_1$ reads
		$$A'_1=T^{A_1}_2A_1T^{A_1}_2=\left[\begin{array}{cccc}
			0 & 0 & 0 & 1\\0 & 0 & 1 & 0\\ 1 & 0 & 0 & 0\\0 & 1 & 0 & 0			\end{array}\right].$$
			The space $\R(Z'^{A_1}_2)$ is two-dimensional,
			$$\R(Z'^{A_1}_2)=\operatorname{span}\{(1,1,0,0)^t,(0,0,1,1)^t\},$$
			and the reduction of $A'_1$ to $\R(Z'^{A_1}_2)$ reads
			$$\left(A'_1\right)\upharpoonright_{\R\left(Z'^{A_1}_2\right)}=\mat{0}{1}{1}{0}.$$
			The only $(Q'^{A_1}_2)\upharpoonright_{\R\left(Z'^{A_1}_2\right)}$ which solves \eqref{A4Q4perm} is precisely $\left(A'_1\right)\upharpoonright_{\R\left(Z'^{A_1}_2\right)}$, so we have four choices for $Q'^{A_1}_2$:
		$$\begin{aligned}
			Q'^{A_1}_2\in & \left\{\left[\begin{array}{cccc}
	0 & 0 & 0 & 1\\0 & 0 & 1 & 0\\ 1 & 0 & 0 & 0\\0 & 1 & 0 & 0			\end{array}\right],\ \left[\begin{array}{cccc}
				0 & 0 & 1 & 0\\0 & 0 & 0 & 1\\ 1 & 0 & 0 & 0\\0 & 1 & 0 & 0			\end{array}\right],\right.\\
								& \left. \left[\begin{array}{cccc}
	0 & 0 & 0 & 1\\0 & 0 & 1 & 0\\ 0 & 1 & 0 & 0\\1 & 0 & 0 & 0	\end{array}\right],\ \left[\begin{array}{cccc}	0 & 0 & 1 & 0\\0 & 0 & 0 & 1\\ 0 & 1 & 0 & 0\\1 & 0 & 0 & 0			\end{array}\right]\right\}.
				\end{aligned}$$
Since $X'^{A_1}_2=Z'^{A_1}_2Q'^{A_1}_2$ and $X^{A_1}_2=T^{A_1}_2X'^{A_1}_2T^{A_1}_2$, we get for every $Q'^{A_1}_2$ calculated above:
\begin{equation}\label{XA12}
	X^{A_1}_2= \left[\begin{array}{cccc}
		0 & 1/2 & 0 & 1/2\\1/2 & 0 & 1/2 & 0\\ 0 & 1/2 & 0 & 1/2\\1/2 & 0 & 1/2 & 0			\end{array}\right].
		\end{equation}
		Define
		
\begin{equation}
\label{XA1}
X^{A_1}=\left\{U_4, \ P_4, \ \left[\begin{array}{cccc}
	0 & 1/2 & 0 & 1/2\\1/2 & 0 & 1/2 & 0\\ 0 & 1/2 & 0 & 1/2\\1/2 & 0 & 1/2 & 0			\end{array}\right]: P_4\in\Omega_4.\right\}.
\end{equation}
		\ \\\
		\ \\
\item For the block $A_2$ the invariant subspaces which allow doubly stochastic orthogonal projectors are just the spaces $W^{A_2}_1=\operatorname{span}\{(1,1)^t\}$ and $W^{A_2}_0=\{0\}$. By Theorem \ref{constr_noncom_0} the only permutation solution in this case is $X^{A_2}_0=A_2$, and in the case for $W^{A_2}_1$ we have $X^{A_2}_1=U_2$.
Combined, we write
\begin{equation}\label{XA2}
 		X^{A_{2}}=\left\{U_2,\quad \mat{0}{1}{1}{0}\right\}.
 		\end{equation}
 		\end{itemize}
 		The expressions \eqref{XA1} and \eqref{XA2} give one family of solutions,
 		\begin{equation}\label{X12}
 		\overline{X_{12}}=\left\{\mat{X_1}{0}{0}{X_2}: X_1\in X^{A_1}, X_2\in X^{A_2}\right\},\end{equation}
 		but these are not all solutions. 
 		\begin{itemize}
 			\item Indeed, another class emerges when we combine the invariant subspaces of $A$ which have the same integer $k$. For example, if we consider the space
 		$$W_6=W^{A_1}_2\oplus W^{A_2}_1$$
 		then the corresponding orthogonal projector $Z_6$ is doubly stochastic, $Z_6=Z^{A_1}_2\oplus U_2$, and in order to fit it into the form \eqref{diagpZ}, we use $T^{A_1}_{2}$ to obtain $T_6=T^{A_1}_2\oplus I_2$ and
 		$$Z'_6=T_6Z_6T_6=U_2\oplus U_2\oplus U_2.$$ Now $A'_6=A'_1\oplus A_2$, and 
 		$$A'_6=\left[\begin{array}{cccccc}
 			0 & 0 & 0 & 1 & 0 & 0\\0 & 0 & 1 & 0& 0 & 0\\ 1 & 0 & 0 & 0 &0 & 0\\0 & 1 & 0 & 0& 0 & 0\\ 0 & 0 & 0 &0 & 0 & 1\\
 			0 & 0 & 0 &0 & 1 & 0	\end{array}\right].$$
 			Restricted to $\R(Z'_6)$, $A'_6$ reads
 			$$\left(A'_6\right)\upharpoonright_{\R(Z'_6)}=\left[\begin{array}{ccc}
 				0 & 1 & 0\\
 				1 & 0 & 0 \\
 				0& 0 & 1\end{array}\right].$$
 				The corresponding permutation $\pi_6$ reads $\pi_6=(3)(1\ 2)$. So the solution permutation $\pi_{Q'_6}$ must have one fixed point and one cycle of length $2$. The choices are
 				$$\pi_{Q'_6}\in\{\pi_6,\ (2)(1\ 3), \ (1) (2\ 3)\}$$
 				and
 				$$\left(Q'_6\right)\upharpoonright_{\R(Z'_6)}\in\left\{\left[\begin{array}{ccc}
 					0 & 1 & 0\\
 					1 & 0 & 0 \\
 					0& 0 & 1\end{array}\right],\ \left[\begin{array}{ccc}
 					1 & 0 & 0\\
 					0 & 0 & 1 \\
 					0& 1 & 0\end{array}\right],\ \left[\begin{array}{ccc}
 					0 & 0 & 1\\
 					0 & 1 & 0 \\
 					1& 0 & 0\end{array}\right] \right\}.$$
 					This gives different choices for the matrix $Q'_6$, 
 					$$\begin{aligned}Q'_6\in&\left\{\left[\begin{array}{cccccc}
 						0 & 0 & a_1 & b_1 & 0 & 0\\0 & 0 & c_1 & d_1& 0 & 0\\ e_1 & f_1 & 0 & 0 &0 & 0\\g_1 & h_1 & 0 & 0& 0 & 0\\ 0 & 0 & 0 &0 & x_1 & y_1\\
 						0 & 0 & 0 & 0 & w_1 & z_1	\end{array}\right],\right.\\
 						&\left.\left[\begin{array}{cccccc}
 						a_2 & b_2 & 0 & 0 & 0 & 0\\c_2 & d_2 & 0 & 0& 0 & 0\\ 0 & 0 & 0 & 0 &e_2 & f_2\\0 & 0 & 0 & 0& g_2 & h_2\\ 0 & 0 & x_2 &y_2 & 0 & 0\\
 						0 & 0 & w_2 & z_2 & 0 & 0	\end{array}\right],\left[\begin{array}{cccccc}
 						0 & 0 & 0 & 0 & a_3 & b_3\\0 & 0 & 0 & 0& c_3 & d_3\\ 0 & 0 & e_3 & f_3 &0 & 0\\0 & 0 & g_3 & h_3& 0 & 0\\ x_3 & y_3 & 0 &0 & 0 & 0\\
 						w_3 & z_3 & 0 & 0 & 0 & 0	\end{array}\right]\right\},\end{aligned}$$
 						where $\mat{a_i}{b_i}{c_i}{d_i}$, $\mat{e_i}{f_i}{g_i}{h_i}$, and $\mat{x_i}{y_i}{w_i}{z_i}$ are arbitrary permutation blocks, independent from one another. This will give
 							$$\begin{aligned}
 								X'_6\in&\left\{\left[\begin{array}{cccccc}
 								0 & 0 & 1/2 & 1/2 & 0 & 0\\0 & 0 & 1/2 & 1/2& 0 & 0\\ 1/2 & 1/2 & 0 & 0 &0 & 0\\1/2 & 1/2 & 0 & 0& 0 & 0\\ 0 & 0 & 0 &0 & 1/2 & 1/2\\
 								0 & 0 & 0 &0  & 1/2 & 1/2	\end{array}\right],\right.\\
 								&\left.\left[\begin{array}{cccccc}
 								 1/2 & 1/2 & 0 & 0 &0 &0\\ 1/2 & 1/2& 0 & 0 & 0 & 0\\ 0 & 0 & 0 & 0 & 1/2 & 1/2\\0 &0 &0 & 0& 1/2 & 1/2\\ 0 & 0 & 1/2 & 1/2&0 &0 \\
 								0 & 0 & 1/2 & 1/2& 0 &0 	\end{array}\right],\left[\begin{array}{cccccc}
 							0 & 0 & 0 & 0 & 1/2 & 1/2\\0 & 0 & 0 & 0 & 1/2 & 1/2\\ 0 & 0 & 1/2 & 1/2 &0 & 0\\ 0 & 0 & 1/2 & 1/2 &0 & 0\\ 1/2 & 1/2 & 0 &0 & 0 & 0\\
 							1/2 & 1/2 & 0 & 0 & 0 & 0	\end{array}\right]\right\}.\end{aligned}$$
 						Then $X_6=T_6X'_6T_6$ and 
 						$$
 							\begin{aligned}
 							X_6\in &\left\{\mat{X^{A_1}_2}{0}{0}{U_2}, \ \left[\begin{array}{cccccc}
 							1/2 & 0 & 1/2 & 0 & 0 & 0 \\
 							0 & 0 & 0 & 0 & 1/2 & 1/2 \\
 							1/2 & 0 & 1/2 & 0 & 0 & 0 \\
 							0 & 0 & 0 & 0 & 1/2 & 1/2 \\
 							0 & 1/2 & 0 & 1/2 & 0 & 0 \\
 							0 & 1/2 & 0 & 1/2 & 0 & 0
 						\end{array}\right], \right.\\ &\left. \left[\begin{array}{cccccc}
 						0 & 0 & 0 & 0 & 1/2 & 1/2 \\
 						0 & 1/2 & 0 & 1/2 & 0 & 0 \\
 						0 & 0 & 0 & 0 & 1/2 & 1/2 \\
 						0 & 1/2 & 0 & 1/2 & 0 & 0 \\
 						1/2 & 0 & 1/2 & 0 & 0 & 0 \\
 						1/2 & 0 & 1/2 & 0 & 0 & 0
 						\end{array}\right]\right\},\end{aligned}$$
 					where $X^{A_1}_2$ is determined in \eqref{XA12}. Notice that only the first matrix in the latter is contained in the solution set $\overline{X_{12}}$ while the remaining two members are not. We define 
 					\begin{eqnarray}
 						\label{X6}
 						\begin{aligned} &\overline{X_6}=\\
 							&\left\{ \left[\begin{array}{cccccc}
 								1/2 & 0 & 1/2 & 0 & 0 & 0 \\
 								0 & 0 & 0 & 0 & 1/2 & 1/2 \\
 								1/2 & 0 & 1/2 & 0 & 0 & 0 \\
 								0 & 0 & 0 & 0 & 1/2 & 1/2 \\
 								0 & 1/2 & 0 & 1/2 & 0 & 0 \\
 								0 & 1/2 & 0 & 1/2 & 0 & 0
 							\end{array}\right], \left[\begin{array}{cccccc}
 								0 & 0 & 0 & 0 & 1/2 & 1/2 \\
 								0 & 1/2 & 0 & 1/2 & 0 & 0 \\
 								0 & 0 & 0 & 0 & 1/2 & 1/2 \\
 								0 & 1/2 & 0 & 1/2 & 0 & 0 \\
 								1/2 & 0 & 1/2 & 0 & 0 & 0 \\
 								1/2 & 0 & 1/2 & 0 & 0 & 0
 							\end{array}\right]\right\}.
 					\end{aligned}\end{eqnarray}\item Similarly, when $W_1=\operatorname{span}\{(1,1,1,1,1,1)^t\}^\perp$, then the only solution is $X_0=U_6$.
 						
 						\item Finally, if $W_0=W^{A_1}_0\oplus Q^{A_2}_0$ then we obtain all permutation solutions. By Example \ref{Ex_4,2}, we know that all permutation solutions are 	
 						\begin{eqnarray}\label{perms6}\begin{aligned}
 							\S_P=	\{&
 								(1\,2\,3\,4)(5\,6),\,
 								(1\,4\,2\,3)(5\,6),\,
 								(1\,2\,4\,3)(5\,6),\,
 								(1\,3\,2\,4)(5\,6),\\
 								&(1\,3\,4\,2)(5\,6),
 								(1\,3)(2\,6\,4\,5),\,
 								(1\,3)(2\,5\,4\,6),\,
 								(1\,6\,3\,5)(2\,4),\,
 								(1\,5\,3\,6)(2\,4)
 								\}.
 							\end{aligned}\end{eqnarray}
 						
 	\end{itemize}
		
	In summary, we have $9$ permutation solutions, given in \eqref{perms6}, and proper doubly stochastic solutions given as the union of the sets
	$$\S_{pDS}=\{U_6\}\cup \overline{X_{12}}\cup \overline{X_6}.$$	
		
		\noindent These are all doubly stochastic solutions. \hfill$\clubsuit$
	\end{example}
	
		\subsection*{Concluding remarks}
	In this paper we have demonstrated how to, at least theoretically, find all doubly stochastic solutions to $AXA=XAX$ whenever $A$ is an arbitrary permutation input matrix. We have found another way to chacterize the set of permutation solutions, and have shown how finding d.s. solutions essentially becomes finding suitable permutation solutions. We have also shown that every d.s. solution is a normal partial isometry, which implies that every d.s. solution has an inner inverse which is also doubly stochastic. Prior to our knowledge, no such characterization has been done regarding this particular case of the Yang-Baxter-like matrix equation. By publishing these results, we offer a correction and completion of some previous works regarding this problem. \\
	
	\noindent\textbf{Comment.} These results were obtained as a part of the Student Internship 2025/2026 organized by the Mathematical Institute of the Serbian Academy of Sciences and Arts.\\
	
	\noindent\textbf{Conflict of interest.} The authors declare that there are no conflicts of interest in publishing the findings obtained in this paper.\\
	
	\noindent\textbf{Data availability statement.} Data availability is not applicable as no data sets were generated during this research.\\
	
	\noindent\textbf{Funding.} The first author is supported by the Ministry of Education, Science and Technological Development, Republic of Serbia, grant No. 451-03-33/2026-03/200029. The second author is supported by the Ministry of Education, Science and Technological Development, Republic of Serbia, grant No. 451-03-34/2026-03/200124.

\end{document}